\newtheorem{thm}{Theorem}
\newtheorem{lem}{Lemma}[section]
\newtheorem{prop}{Proposition}[section]
\newtheorem{cor}{Corollary}
\newtheorem{conj}{Conjecture}
\newtheorem{definition}{Definition}
\newtheorem{prob}{Problem}
\numberwithin{equation}{section}
\newcommand{\N}{\mathbb{N}}
\newcommand{\Z}{\mathbb{Z}}
\DeclareMathOperator{\lcm}{lcm}
\begin{document}
\title[special type of unit equations in two variables II]
{Number of solutions to a special type \\of unit equations in two unknowns, II}

\author{Takafumi Miyazaki}
\address{Takafumi Miyazaki
\hfill\break\indent Gunma University, Division of Pure and Applied Science,
\hfill\break\indent Faculty of Science and Technology
\hfill\break\indent Tenjin-cho 1-5-1, Kiryu 376-8515.
\hfill\break\indent Japan}
\email{tmiyazaki@gunma-u.ac.jp}

\author{Istv\'an Pink}
\address{Istv\'an Pink
\hfill\break\indent University of Debrecen, Institute of Mathematics
\hfill\break\indent H-4002 Debrecen, P.O. Box 400.
\hfill\break\indent Hungary}
\email{pinki@science.unideb.hu}

\thanks{The first author is supported by JSPS KAKENHI (No.20K03553).
The second author was supported by the NKFIH grants ANN130909 and K128088.}

\subjclass[2010]{11D61, 11J86, 11J61}
\keywords{$S$-unit equation, purely exponential equation, Baker's method, non-Archimedean valuation, Fermat prime}

\date{\today}

\maketitle

\markleft{T. Miyazaki \& I. Pink}
\markright{
Special type of unit equations in two unknowns, II}


\begin{abstract}
This paper contributes to the conjecture of R.~Scott and R.~Styer which asserts that for any fixed relatively prime positive integers $a,b$ and $c$ all greater than 1 there is at most one solution to the equation $a^x+b^y=c^z$ in positive integers $x,y$ and $z$, except for specific cases.
The fundamental result proves the conjecture under some congruence condition modulo $c$ on $a$ and $b$.
As applications the conjecture is confirmed to be true if $c$ takes some small values including the Fermat primes found so far, and in particular this provides an analytic proof of the celebrated theorem of Scott [R. Scott, On the equations $p^x-b^y=c$ and $a^x+b^y=c^z$, J. Number Theory 44(1993), no.2, 153-165] solving the conjecture for $c=2$ in a purely algebraic manner.
The method can be generalized for smaller modulus cases, and it turns out that the conjecture holds true for infinitely many specific values of $c$ not being perfect powers.
The main novelty is to apply a special type of the $p$-adic analogue to Baker's theory on linear forms in logarithms via a certain divisibility relation arising from the existence of two hypothetical solutions to the equation.
The other tools include Baker's theory in the complex case and its non-Archimedean analogue for number fields together with various elementary arguments through rational and quadratic numbers, and extensive computation.
\end{abstract}

\section{Introduction} \label{sec-intro}%

The object here is the best possible general estimate of the number of solutions to a special type of the unit equations in two unknowns over the rationals.
The contents of this paper are motivated by some works of Scott \cite{Sc} and Bennett \cite{Be_cjm_01}, and they complement our previous work in \cite{MiPi}, but are independent of it.

First of all, we shall give a brief history on purely exponential Diophantine equations, and start with a general type, given as follows:
\begin{equation} \label{general}
a_1 {b_{11}}^{x_{11}} \cdots {b_{1 l}}^{x_{1 l}}
+a_2 {b_{21}}^{x_{21}} \cdots {b_{2 l}}^{x_{2 l}}
+ \cdots +a_k {b_{k1}}^{x_{k1}} \cdots {b_{k l}}^{x_{k l}}
=0
\end{equation}
with $k \ge 3$ and $l \ge 1$, where each letter using $a,b$ and $x$ denotes a fixed nonzero integer, a fixed integer greater than 1 and an unknown non-negative integer, respectively.
The above equation has a long and rich history, and includes several cases which have been actively studied to date (cf.~\cite[Ch.1]{ShTi}, \cite[D10]{Gu}, \cite{BerHa}, \cite[Ch.s 4 to 6]{EvGy}).
Since the number of prime divisors of each term in the left-hand side is finite, apparently equation \eqref{general} is a special case of unit equations which are a very important object in Diophantine number theory and appear in a number of topics concerning usual polynomial Diophantine equations as well (cf.~\cite{EvGy}).
Schmidt Subspace Theorem is applied for the unit equations to conclude that
equation \eqref{general} has at most finitely many solutions $x_{i j}\,(1 \le i \le k, 1 \le j \le l)$ for which the left-hand side has no vanishing subsum, and the finiteness of solutions for general unit equations has been extensively investigated in the literature (cf.~\cite[Ch.6]{EvGy}).
Although it is in general not easy to find all solutions to even very special cases of \eqref{general}, only for the case where the number of the terms in the equation is least, that is, $k=3$, Baker's theory on linear forms in logarithms is applied in general to give an explicit upper bound, being effectively computable by the equation's parameters, for each unknown exponent.
This fact plays a fundamental role to explicitly resolve exponential Diophantine equations including \eqref{general} with $k=3$ in many of the existing works, which is also the case for this paper.

From now on, we consider special cases of \eqref{general} with $k=3$, where the coefficients do not appear and the number of unknown exponents is very small.
They are closely related to the generalized Fermat conjecture (cf.~\cite{BeMihSi}) and Catalan's conjecture (Mih\u{a}ilescu's theorem \cite{Mi}) as well as to the well-known conjecture of Pillai which asserts that there are only finitely many pairs of distinct perfect powers with their difference fixed.
One of such examples is the following:
\begin{equation} \label{pillai}
a^x-b^y=c,
\end{equation}
where $a,b,c$ are fixed positive integers with $a>1$ and $b>1$, and $x,y$ are unknown positive integers.
Note that the unknown exponents here are allowed to equal 1.
After the pioneer works of Piilai \cite{Pi,Pi2} on the above equation, several researchers have attempted to obtain general estimate on the number of solutions for it (cf.~\cite[Sec.1]{Be_cjm_01}, \cite[Sec.1]{Be_JNT_03}).
A general and definitive result for this direction was finally obtained by Bennett \cite[Theorem 1.1]{Be_cjm_01} as follows:

\begin{prop} \label{atmost2pillai}
There are at most two solutions to equation $\eqref{pillai}.$
\end{prop}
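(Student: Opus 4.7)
The plan is to assume for contradiction that three distinct solutions $(x_1,y_1)$, $(x_2,y_2)$, $(x_3,y_3)$ exist, ordered so that $x_1<x_2<x_3$. Since $b^{y_i}=a^{x_i}-c$ is increasing in $x_i$, this forces $y_1<y_2<y_3$. Subtracting pairs of instances of \eqref{pillai} produces the key identities
\[
a^{x_i}\bigl(a^{x_j-x_i}-1\bigr) \;=\; b^{y_i}\bigl(b^{y_j-y_i}-1\bigr) \qquad (1\le i<j\le 3),
\]
from which, via the lifting-the-exponent lemma applied at each prime divisor of $ab$, one obtains $p$-adic information linking $x_i,y_i$ to the multiplicative orders of $a$ and $b$ modulo prime powers dividing the opposite side.

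Next I would exploit the fact that $a^{x_j}/b^{y_j}=1+c\,b^{-y_j}$ is very close to $1$ to bound from above the linear form in two logarithms
\[
\Lambda_j \;:=\; |\,x_j\log a - y_j\log b\,| \;\ll\; c\,b^{-y_j}.
\]
A corresponding lower bound from the Laurent--Mignotte--Nesterenko theorem (or Mignotte's sharpened two-logarithm estimate) has the shape $\log|\Lambda_j|\ge -C(\log a)(\log b)\bigl(\log\max(x_j,y_j)\bigr)^2$. Combining the two yields a bound $y_j\le f(a,b,c)$ of essentially polylogarithmic type. This is too weak to settle the problem from a single solution, but it will serve as the analytic input to the next step.

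The decisive step is a gap principle fuelled by having three solutions simultaneously. The divisibility relations above imply $b^{y_1}\mid a^{x_2-x_1}-1$ and $b^{y_2}\mid a^{x_3-x_2}-1$, together with the symmetric statements for powers of $a$. Elementary manipulation of these, together with the monotonicity $y_1<y_2<y_3$, forces an essentially doubly-exponential separation $y_3\gg b^{y_2}\gg b^{b^{y_1}}$ (and likewise for the $x_i$), which eventually contradicts the polylogarithmic upper bound produced by Baker's method once the exponents exceed an effectively computable threshold depending on $a,b,c$. The residual parameter range below this threshold would then be eliminated by a direct computer search, narrowed substantially by the $p$-adic congruences coming from step one.

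The hard part will be making the gap principle effective in configurations where the standard two-logarithm lower bound is nearly tight, in particular when $a$ and $b$ are multiplicatively dependent or when $\gcd(a,b)>1$ interacts badly with the prime factorization of $c$; in these cases the Baker-type bound alone does not beat the gap estimate, and one needs to supplement it with hypergeometric/Pad\'e approximations to $(1-z)^{1/n}$ to push the remaining range into reach of computation. Handling this interplay between analytic and arithmetic tools, across the full two-parameter family of $(a,b,c)$, is the genuine obstacle and is precisely where Bennett's refinement enters.
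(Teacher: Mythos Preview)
This proposition is not proved in the paper; it is quoted from Bennett \cite{Be_cjm_01}, and the paper only summarizes the method in one sentence: ``a special type of Baker's method on lower bounds for linear forms in two logarithms together with a gap principle arising from the existence of three hypothetical solutions,'' adding that ``the case in which the coprimality of $a$ and $b$ lacks is handled only by a simple (but skillful) argument.'' Your overall architecture---assume three solutions, extract the identities $a^{x_i}(a^{x_j-x_i}-1)=b^{y_i}(b^{y_j-y_i}-1)$, feed the resulting gap principle against a two-logarithm lower bound---matches that summary.

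Two points in your sketch are off, however. First, the gap you claim is too strong: from $b^{y_2}\mid a^{x_3-x_2}-1$ (valid when $\gcd(a,b)=1$) you only get $a^{x_3-x_2}>b^{y_2}$, i.e.\ $x_3-x_2>y_2\log b/\log a$, which is a \emph{linear} separation in the exponents, not the doubly-exponential $y_3\gg b^{y_2}$ you wrote. Bennett's argument works with this weaker (correct) gap, but the analysis balancing it against the Laurent--Mignotte--Nesterenko bound is more delicate than your sketch suggests; it is not a matter of one side being doubly-exponential and the other polylogarithmic.

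Second, you have the difficulty of the $\gcd(a,b)>1$ case inverted. The paper explicitly notes that Bennett disposes of it by a simple elementary argument, and the same is true when $a,b$ are multiplicatively dependent; neither requires hypergeometric or Pad\'e input. The hypergeometric method does appear in \cite{Be_cjm_01}, but for sharper results toward the classification of pairs with exactly two solutions, not for the at-most-two statement itself. So your plan is essentially the right one for the coprime case, but the anticipated obstacle and its remedy are misplaced.
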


The proof of this proposition is achieved by the combination of a special type of Baker's method on lower bounds for linear forms in two logarithms together with a gap principle arising from the existence of three hypothetical solutions.
It should be remarked that there are a number of examples which allow equation \eqref{pillai} to have two solutions (cf.~\eqref{excep-equs} below), and that the case in which the coprimality of $a$ and $b$ lacks is handled only by a simple (but skillful) argument.
In the same paper, as a further problem, Bennett posed a candidate for the complete list composed of the triples $(a,b,c)$ where there are two solutions to equation \eqref{pillai} (cf.~\cite[Conjecture 1.2]{Be_cjm_01}), and he gave a few partial results to support the validity of his question.

On the other hand, motivated by the celebrated theorem of Bennett (Proposition \ref{atmost2pillai}), we have attempted to generalize it to a 3-variable case, which concerns another particular case of equation \eqref{general} with $k=3$, given as follows:
\begin{equation} \label{abc}
a^x+b^y=c^z,
\end{equation}
where $a,b,c$ are fixed relatively prime positive integers greater than 1, and $x,y,z$ are unknown positive integers.
The above equation itself has a long history and has been actively studied by many researchers including pioneers Scott, Terai and Le etc. (see for example \cite{CiMi,Lu_aa_12,Mi_aa18} and the references therein).
The main result of our previous work \cite{MiPi} is as follows:

\begin{prop}\label{atmost2}
There are at most two solutions to equation $\eqref{abc},$ except when $(a,b,c)$ or $(b,a,c)$ equals $(5,3,2),$ where there are exactly three solutions.
\end{prop}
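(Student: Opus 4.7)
The plan is to argue by contradiction and suppose three distinct solutions $(x_i,y_i,z_i)$, $i=1,2,3$, exist. After reordering we may take $z_1\le z_2\le z_3$; the coprimality of $a,b,c$ together with simple modular considerations on $a^{x_i}+b^{y_i}$ forces the $z_i$ to be pairwise distinct, so in fact $z_1<z_2<z_3$, and similarly one can extract monotonicity on the $x_i$'s and $y_i$'s up to swapping $a$ and $b$.

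First I would establish a gap principle. Reducing $a^{x_j}+b^{y_j}=c^{z_j}$ modulo $c^{z_i}$ for $i<j$ yields $a^{x_j}+b^{y_j}\equiv 0\pmod{c^{z_i}}$, and combining with the analogous congruence for the $i$-th solution produces a relation of the form $a^{x_j-x_i}\equiv b^{y_j-y_i}\pmod{c^{z_i}}$. Controlling the multiplicative orders modulo prime-power divisors of $c^{z_i}$, with a lifting-the-exponent type input, one deduces that either $\max(x_j-x_i,y_j-y_i)$ or $z_j$ must be quite large relative to $z_i$. Applied iteratively across the three solutions this yields a superlinear lower bound, of roughly the shape $z_3\gg z_1z_2$ up to logarithmic factors, which will be the critical gap estimate.

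Next I would apply Baker's theorem on lower bounds for linear forms in two logarithms. After possibly swapping $a$ and $b$ and applying pigeonhole across the three solutions, one may assume $a^{x_i}>b^{y_i}$ for the relevant indices, and then
\[
0<\Lambda_i:=z_i\log c-x_i\log a=\log\!\left(1+\frac{b^{y_i}}{a^{x_i}}\right)<\frac{b^{y_i}}{a^{x_i}}.
\]
A Laurent-type lower bound $\log\Lambda_i\gg -\log x_i\cdot\log z_i$ (with explicit constants depending on $a,b,c$) then forces $\max(x_i,y_i,z_i)$ to obey a polylogarithmic self-bound, which translates into an absolute effective ceiling for the exponents in terms of $\log a$, $\log b$, $\log c$.

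Combining the gap principle with Baker's ceiling leaves only a finite and tractable collection of triples $(a,b,c)$ together with small candidate exponents, which would be dispatched by an exhaustive computer search sieving with small auxiliary primes. The main obstacle will be making the gap principle and Baker's ceiling mesh in the small-$c$ regime, particularly when $c=2$: there $b^{y_i}/a^{x_i}$ need not be minuscule, so the linear form bound is weakest exactly where the exception $\{a,b\}=\{3,5\}$, $c=2$ (with the three solutions $3+5=2^3$, $3^3+5=2^5$ and $3+5^3=2^7$) lives. Carefully handling this boundary — likely by supplementing Baker's archimedean bound with the non-archimedean analogue applied to the prime $2$ and using the explicit structure of $3$-adic and $5$-adic orders — is what isolates precisely the stated exception.
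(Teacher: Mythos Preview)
This proposition is not proved in the present paper; it is quoted from the authors' earlier work \cite{MiPi}, and the paper only summarizes the ingredients: ``the combination of Baker's method in both complex and $p$-adic cases together with an improved version of the gap principle established by Hu and Le \cite{HuLe,HuLe2,HuLe3} arising from the existence of three hypothetical solutions as well as a certain 2-adic argument relying upon the striking result of Scott and Styer \cite{ScSt}.''

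Your outline matches this description at the architectural level: three hypothetical solutions, a gap principle forcing $z_3$ to be large relative to $z_1,z_2$, an archimedean Baker bound to cap the exponents, and a residual computation. Two points deserve sharpening. First, the gap principle you sketch is the right shape, but the actual argument requires the refined versions of Hu and Le; a naive lifting-the-exponent estimate on $a^{x_j-x_i}\equiv b^{y_j-y_i}\pmod{c^{z_i}}$ does not by itself produce the needed superlinear growth, and your preliminary claim that the $z_i$ are automatically distinct ``by simple modular considerations'' is not justified as stated. Second, the delicate small-$c$ regime (especially $c=2$) is not closed merely by switching to a $2$-adic linear-forms estimate; the cited proof invokes the specific structural result of Scott and Styer \cite{ScSt} on parity/class constraints for solutions, which is what ultimately isolates the $(3,5,2)$ exception. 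Your instinct that the boundary case needs an additional non-archimedean input is right, but the input is combinatorial (Scott--Styer) rather than purely analytic.
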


The proof of this proposition is achieved by the combination of Baker's method in both complex and $p$-adic cases together with an improved version of the gap principle established by Hu and Le \cite{HuLe,HuLe2,HuLe3} arising from the existence of three hypothetical solutions as well as a certain 2-adic argument relying upon the striking result of Scott and Styer \cite{ScSt}.
From the viewpoint of the generalized Fermat equation (cf.~\cite[Ch.14]{Co}), or of the classical popular problem to seek for all relations that the sum of two perfect powers being `relatively prime' equals another perfect power, Proposition \ref{atmost2} is regarded as a 3-variable version of Proposition \ref{atmost2pillai}.
Further it is definitive in the sense that there are (infinitely) many triples $(a,b,c)$ which allow equation \eqref{abc} to have two solutions.
Indeed, according to \cite[Sec.3]{ScSt}, we have
\begin{gather}
3^{}+5^{}=2^{3}, \ 3^{3}+5^{}=2^{5}, \ 3^{}+5^{3}=2^{7}; \nonumber\\
3^{}+13^{}=2^{4}, \ 3^{5}+13^{}=2^{8}; \nonumber\\
2^{2}+5^{}=3^{2}, \ 2^{}+5^{2}=3^{3}; \nonumber\\
2^{}+7^{}=3^{2}, \ 2^{5}+7^{2}=3^{4}; \nonumber\\
2^{3}+3^{}=11^{}, \ 2^{}+3^{2}=11^{}; \nonumber\\
3^{}+10^{}=13^{}, \ 3^{7}+10^{}=13^{3}; \nonumber\\
\label{excep-equs} 2^{5}+3^{}=35^{}, \ 2^{3}+3^{3}=35^{};\\
2^{}+89^{}=91^{}, \ 2^{13}+89^{}=91^{2}; \nonumber\\
2^{7}+5^{}=133^{}, \ 2^{3}+5^{3}=133^{};\nonumber\\
2^{8}+3^{}=259^{}, \ 2^{4}+3^{5}=259^{}; \nonumber\\
3^{7}+13^{}=2200^{}, \ 3^{}+13^{3}=2200^{}; \nonumber\\
2^{13}+91^{}=8283^{}, \ 2^{}+91^{2}=8283^{}; \nonumber\\
2^{}+(2^k-1)^{}={2^k+1}^{}, \ 2^{k+2}+(2^k-1)^{2}=(2^k+1)^{2},\nonumber
\end{gather}
where $k$ is any integer with $k \ge 2$.
On the other hand, similarly to the situation of equation \eqref{pillai}, something rather stronger than Proposition \ref{atmost2} seems to be true.
Based on a computer search, Scott and Styer \cite{ScSt} conjectured that \eqref{excep-equs} provides the complete list of all triples $(a,b,c)$ where there are at least two solutions to equation \eqref{abc}, as follows:

\begin{conj} \label{atmost1conj}
Assume that none of $a,b$ and $c$ is a perfect power.
There is at most one solution to equation $\eqref{abc},$ except when $(a,b,c)$ or $(b,a,c)$ belongs to the following set\,$:$
\begin{align}\label{excep-set}
\{ \,&(3,5,2),(3,13,2),(2,5,3),(2,7,3),\\
&(2,3,11),(3,10,13),(2,3,35),(2,89,91),(2,5,133),\nonumber\\
&(2,3,259),(3,13,2200),(2,91,8283),(2,2^k-1,2^k+1)\,\},\nonumber
\end{align}
where $k$ is any positive integer with $k=2$ and $k \ge 4.$
\end{conj}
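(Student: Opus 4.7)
The plan is to argue by contradiction. Assume $(a,b,c)$ satisfies the hypotheses of Conjecture \ref{atmost1conj}, is not in the exceptional set \eqref{excep-set}, and that equation \eqref{abc} admits two distinct solutions $(x_i,y_i,z_i)$ for $i=1,2$ with $z_1\le z_2$. Reducing both equations modulo $c^{z_1}$ gives $a^{x_i}\equiv -b^{y_i}\pmod{c^{z_1}}$ for each $i$, and cross-multiplying yields the fundamental two-solution congruence $a^{x_2-x_1}\equiv b^{y_2-y_1}\pmod{c^{z_1}}$, equivalently a multiplicative relation $a^{X}b^{-Y}\equiv 1\pmod{c^{z_1}}$ with $X=x_2-x_1$ and $Y=y_2-y_1$. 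This forces, for every prime $p\mid c$, the $p$-adic valuation of the associated linear form $\Lambda_p=X\log_p a-Y\log_p b$ in two $p$-adic logarithms to be at least $z_1\,v_p(c)$, up to a bounded error.

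Against this large lower bound I would deploy Baker's theory in its two-variable, non-Archimedean form. A sharp lower bound of Bugeaud--Laurent type gives $v_p(\Lambda_p)\le C(a,b,p)\,(\log\max(|X|,|Y|))^2$ with an explicit constant, whence $z_1$ is at most polylogarithmic in the exponents of the second solution. Substituting back, the defining equation $a^{x_1}+b^{y_1}=c^{z_1}$ forces $x_1,y_1$ to be of the same polylogarithmic size, and an application of the complex Baker bound to $\Lambda=x_2\log a+y_2\log b-z_2\log c$ (using $|\Lambda|\le 2\min(a^{x_2},b^{y_2})/c^{z_2}$) then propagates this control to $x_2,y_2,z_2$, pinching all six unknowns into an explicit finite range.

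The final step is a computational verification: for each small $c$ of interest --- in particular $c=2$ (recovering Scott's theorem), together with the known Fermat primes $c\in\{3,5,17,257,65537\}$ and the further small moduli reached by the congruence hypothesis advertised in the abstract --- the ranges above are small enough that a direct enumeration over $(a,b)$ and the residues modulo $c$ confirms that every candidate triple either yields at most one solution or already appears in \eqref{excep-set}. The infinite parametric family $(a,b,c)=(2,2^k-1,2^k+1)$ is treated separately by an elementary identity argument, noting that $c-b=2=a$ and $(c+b)(c-b)=2^{k+2}$ account for the two listed solutions.

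The main obstacle is the quality of the $p$-adic constant $C(a,b,p)$ above. When no information is assumed on the residues of $a,b$ modulo $c$, the best available lower bounds for linear forms in two $p$-adic logarithms carry a factor proportional to $p$ (more precisely, to the index of $\langle a,b\rangle$ in $(\mathbb{Z}/p)^\times$), which is too weak for the pinch to close in full generality. Removing this dependence --- and so establishing Conjecture \ref{atmost1conj} unconditionally for every non-power $c$ --- would appear to require either a refined $p$-adic estimate tailored to the arithmetic of $a^x+b^y=c^z$, or a genuinely new structural input, which is why only a conditional theorem together with unconditional applications to small $c$ seem within reach of current technology.
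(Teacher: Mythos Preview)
The statement you are addressing is Conjecture~\ref{atmost1conj}, which the paper explicitly poses as an \emph{open problem} and does not prove; there is accordingly no ``paper's own proof'' to compare against. Your proposal is not in fact a proof but an outline that, in its final paragraph, correctly identifies the obstruction: the available $p$-adic lower bounds for linear forms in two logarithms carry a dependence on the multiplicative orders of $a,b$ modulo the primes dividing $c$, and without a hypothesis controlling these orders the argument does not close. This is precisely the situation the paper is in, and is why it proves only the conditional Theorem~\ref{th1} (under $\gcd(e_c(a),e_c(b))=1$), the asymptotic Theorem~\ref{th2}, and the small-$c$ Theorem~\ref{th3}.

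A few remarks on how your sketch compares with the route the paper takes toward those partial results. The two-solution relation the paper exploits is not $a^{x_2-x_1}\equiv b^{y_2-y_1}\pmod{c^{z_1}}$ but rather $h^{\Delta}\equiv\pm 1\pmod{c^{z_1}}$ for each $h\in\{a,b\}$, with $\Delta=|x_1y_2-x_2y_1|$ (Lemma~\ref{basic-cong}); this is obtained by raising and combining the two congruences, and has the virtue of involving a single base at a time. Under the congruence hypothesis $a,b\equiv\pm 1\pmod{c}$, this upgrades via the elementary Lemma~\ref{padic-lem} to the exact divisibility $C^{z_1}\mid\gcd(a-\delta_a,b-\delta_b)\cdot\Delta$ (Lemma~\ref{div}), which is the real engine: it bounds $c$ and $z_1$ outright once $\max\{x_1,y_1\}>1$, and is what makes the finite search tractable. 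Your formulation via a $p$-adic linear form $X\log_p a - Y\log_p b$ is morally equivalent, but note that $\log_p a$ is not defined unless $a\equiv 1\pmod{p}$, so some preliminary reduction (as in Lemma~\ref{weakform}) is needed before one can even write it down; the paper's $\Delta$-congruence sidesteps this. Finally, your description of the endgame (``direct enumeration over $(a,b)$'') understates the work required: Sections~\ref{sec-th1-bound}--\ref{sec-th1} show that even under the favourable hypothesis of Theorem~\ref{th1} one needs a second, sharper application of the $p$-adic estimates with modulus $C^{z_1}/\Delta'$ (Lemmas~\ref{strong-applied} and \ref{bound-x1y1}) and a substantial multi-step sieve before the search becomes finite.
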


This conjecture is regarded as a 3-variable version of Bennett's open question mentioned on equation \eqref{pillai}, and it seems to be an ultimate proposition through the studies on purely exponential Diophantine equations.
The aim of this paper is to establish several results on Conjecture \ref{atmost1conj}.
Note that it is already known from the literature that the solutions to equation \eqref{abc} corresponding to each triple in \eqref{excep-set} are described as in \eqref{excep-equs}.

Before stating our results, we introduce a simple notion, which was helpful in the proof of Proposition \ref{atmost2}, to extend the definition of multiplicative order on the reduced residue class groups.

\begin{definition} \label{extend}
Let $M$ be a positive integer.
For any integer $\mathcal A$ coprime to $M,$ the extended multiplicative order of $\mathcal A$ modulo $M$ is defined as the least positive integer $E$ such that ${\mathcal A}^E$ is congruent to $1$ or $-1$ modulo $M.$
\end{definition}

Our first result is the fundamental work of this paper.

\begin{thm} \label{th1}
Assume that the extended multiplicative orders of $a$ and $b$ modulo $c$ are relatively prime.
Then Conjecture $\ref{atmost1conj}$ is true, namely, there is at most one solution to equation $\eqref{abc},$ except when $(a,b,c)$ or $(b,a,c)$ equals one of $(3,5,2),(3,13,2),(2,5,3)$ and $(2,7,3).$
\end{thm}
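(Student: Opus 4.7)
The plan is to suppose for contradiction that the equation has two distinct solutions $(x_1,y_1,z_1)$ and $(x_2,y_2,z_2)$, labelled so that $z_1\le z_2$, and then to bound every exponent effectively, reducing to a finite and explicit list of triples $(a,b,c)$ which turns out to contain precisely the four exceptions in the statement.

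\textbf{Step 1 (consequences of the order hypothesis).} Reduction modulo $c$ gives $a^{x_i}\equiv -b^{y_i}\pmod c$ for $i=1,2$. Raising to the $E_a$-th power and using $a^{E_a}\equiv\pm 1\pmod c$ yields $b^{E_a y_i}\equiv\pm 1\pmod c$, hence $E_b\mid E_a y_i$; since $\gcd(E_a,E_b)=1$ by hypothesis, $E_b\mid y_i$, and symmetrically $E_a\mid x_i$. In particular $E_a\mid x_2-x_1$ and $E_b\mid y_2-y_1$, a rigidity that will be used crucially at the end.

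\textbf{Step 2 (divisibility and $p$-adic Baker).} Since $z_1\le z_2$, the congruence $a^{x_i}\equiv -b^{y_i}\pmod{c^{z_1}}$ holds for both $i$; multiplying the first by $b^{y_2}$, the second by $b^{y_1}$ and subtracting yields the central divisibility relation advertised in the abstract,
\[
c^{z_1}\ \Big|\ a^{x_2-x_1}-b^{y_2-y_1}.
\]
For each prime $p\mid c$, the sharp $p$-adic lower bound on linear forms in the two logarithms $\log_p a,\log_p b$ (Bugeaud--Laurent type), applied to the nonzero rational $a^{x_2-x_1}/b^{y_2-y_1}-1$, gives
\[
z_1\,v_p(c)\ \le\ v_p\bigl(a^{x_2-x_1}-b^{y_2-y_1}\bigr)\ \ll_{a,b,p}\ \bigl(\log\max(x_2-x_1,y_2-y_1)\bigr)^{2}.
\]
Combined with the trivial estimate $\max(x_i,y_i)\le z_i\log c/\log\min(a,b)$, this forces $z_1\ll_{a,b,c}(\log z_2)^{2}$.

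\textbf{Step 3 (bounding $z_2$ and closing the loop).} A second, independent application of $p$-adic Baker, this time to $a^{x_2}+b^{y_2}=c^{z_2}$ itself, gives $z_2\,v_p(c)\ll_{a,b,p}(\log\max(x_2,y_2))^{2}$; combined once more with $\max(x_2,y_2)\le z_2\log c/\log\min(a,b)$ this collapses to an absolute bound $z_2\le Z(a,b,c)$, hence a bound on every exponent in both solutions. Complex Baker, applied in parallel to the linear form $z_2\log c-\max(x_2\log a,y_2\log b)$, is used to sharpen the numerical constants so that the residual search is practical; the congruences $E_a\mid x_i$, $E_b\mid y_i$ from Step 1 then drastically shrink the candidate list.

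\textbf{Step 4 (small $c$ and the main obstacle).} For $c\ge 4$ the finite verification is expected to produce no exception. The tiny cases $c=2$ and $c=3$, where the coprimality hypothesis is automatic, require specialised treatment: a $2$-adic/mod-$8$ analysis for $c=2$ (giving an analytic proof of Scott's theorem) and an argument in the ring of Eisenstein integers for $c=3$, both isolating precisely the four listed exceptional triples. The main obstacle I anticipate is calibrating the constants in Steps 2--3: the $p$-adic bound on $z_1$ is only polylogarithmic in $z_2$, so threading the complex and non-Archimedean Baker estimates together into an \emph{explicit} and numerically feasible upper bound on $z_2$ valid uniformly across the family---while simultaneously disposing of the delicate small-$c$ cases by elementary means---constitutes the core difficulty.
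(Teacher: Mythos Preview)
Your outline contains a genuine gap: every bound you state is of the form $\ll_{a,b,c}$, and nowhere do you explain how $a$, $b$, $c$ themselves become bounded. A $p$-adic Baker estimate on $a^{x_2}+b^{y_2}=c^{z_2}$ does give $z_2\le Z(a,b,c)$ effectively, but $Z(a,b,c)$ grows with $a,b$; this yields nothing like a ``finite and explicit list of triples $(a,b,c)$''. The hypothesis $\gcd(E_a,E_b)=1$ is used in your Step~1 only to obtain $E_a\mid x_i$, $E_b\mid y_i$, and then dropped; that divisibility alone cannot rescue the argument.

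The paper's route is different in one decisive respect. From your Step~1 divisibilities it \emph{substitutes} $A=a^{E_a}$, $B=b^{E_b}$, reducing to the case $A\equiv\pm1$, $B\equiv\pm1\pmod{c}$ (equivalently $e_c(A)=e_c(B)=1$), so in particular $\min\{A,B\}\ge c-1$. The central divisibility relation is then not your $c^{z_1}\mid a^{x_2-x_1}-b^{y_2-y_1}$, but rather $h^{\Delta}\equiv(\pm1)^{\Delta}\pmod{C^{z_1}}$ for $h\in\{A,B\}$ with $\Delta=|x_1y_2-x_2y_1|$; combined with $h\equiv\pm1\pmod{c}$ and lifting-the-exponent this gives $C^{z_1}\mid(h\mp1)\cdot\Delta$. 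Since $\Delta$ is bounded by Baker and $h\mp1<c^{z_1/\mathcal X}+1$ with $\mathcal X=\max\{x_1,y_1\}$, one obtains $C^{z_1}\ll z_1\,c^{z_1/\mathcal X}$, which for $\mathcal X\ge2$ bounds $c$ and $z_1$ \emph{absolutely}. The residual case $\mathcal X=1$ (i.e.\ $x_1=y_1=1$) needs a separate $p$-adic argument exploiting $A+B=c^{z_1}$, not an Eisenstein-integer trick; and the small-$c$ cases are handled by the same machinery with sharper Baker constants plus computation, not by ad hoc algebraic methods.
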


We note that for the triples $(a,b,c)$ in \eqref{excep-set} not being handled by the above theorem, the values of $\gcd(\,e_{c}(a), e_{c}(b)\,)\,(>1)$ are given as follows:
\begin{align*}
&e_{ 11 }( 2 )= 5, & &e_{ 11 }( 3 )= 5 , & & \gcd (\,e_{ 11 }( 2 ),e_{ 11 }( 3 ) \,)= 5
,\\
&e_{ 13 }( 3 )= 3 , & &e_{ 13 }( 10 )= 3 , & & \gcd (\,e_{ 13 }( 3 ),e_{ 13 }( 10 ) \,)= 3
,\\
&e_{ 35 }( 2 )= 12 , & &e_{ 35 }( 3 )= 12 , & & \gcd (\,e_{ 35 }( 2 ),e_{ 35 }( 3 ) \,)= 12
,\\
&e_{ 91 }( 2 )= 12 , & &e_{ 91 }( 89 )= 12 , & & \gcd (\,e_{ 91 }( 2 ),e_{ 91 }( 89 ) \,)=
12 ,\\
&e_{ 133 }( 2 )= 18 , & &e_{ 133 }( 5 )= 18 , & & \gcd (\,e_{ 133 }( 2 ),e_{ 133 }( 5 )
\,)= 18 ,\\
&e_{ 259 }( 2 )= 36 , & &e_{ 259 }( 3 )= 9 , & & \gcd (\,e_{ 259 }( 2 ),e_{ 259 }( 3 ) \,)=
9 ,\\
&e_{ 2200 }( 3 )= 20 , & &e_{ 2200 }( 13 )= 20 , & & \gcd (\,e_{ 2200 }( 3 ),e_{ 2200 }( 13
) \,)= 20 ,\\
&e_{ 8283 }( 2 )= 25 , & & e_{ 8283 }( 91 )= 25 , && \gcd (\,e_{ 8283 }( 2 ),e_{ 8283 }( 91
) \,)= 25,\\
&e_{ 2^k+1 }( 2 )= k+1 , & & e_{ 2^k+1 }( 2^k-1 )= k+1 , && \gcd (\,e_{ 2^k+1 }( 2 ),e_{ 2^k+1 }( 2^k-1
) \,)= k+1.
\end{align*}

Theorem \ref{th1} has the following immediate corollary.

\begin{cor}\label{coro1}
Assume that at least one of $a$ and $b$ is congruent to $1$ or $-1$ modulo $c.$
Then there is at most one solution to equation $\eqref{abc},$ except when $(a,b,c)$ or $(b,a,c)$ equals one of $(3,5,2),(3,13,2),(2,5,3)$ and $(2,7,3).$
\end{cor}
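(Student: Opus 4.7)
The plan is to derive Corollary \ref{coro1} as a direct consequence of Theorem \ref{th1} by showing that the hypothesis on $\pm 1$ congruence forces the extended multiplicative orders of $a$ and $b$ modulo $c$ to be coprime, so that Theorem \ref{th1} applies verbatim.

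First I would record the coprimality setup: since $a,b,c$ are pairwise relatively prime by assumption in equation \eqref{abc}, both $a$ and $b$ are coprime to $c$, so Definition \ref{extend} is legitimately applicable and the quantities $e_c(a)$ and $e_c(b)$ are well defined positive integers.

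Next, suppose without loss of generality (by the symmetry of equation \eqref{abc} in $a$ and $b$, which is already reflected in the statement of Theorem \ref{th1} through the phrase ``$(a,b,c)$ or $(b,a,c)$'') that $a \equiv 1 \pmod c$ or $a \equiv -1 \pmod c$. Then by Definition \ref{extend}, the least positive integer $E$ with $a^E \equiv \pm 1 \pmod c$ is $E=1$, so $e_c(a) = 1$. Consequently
\[
\gcd\bigl(e_c(a),\, e_c(b)\bigr) \;=\; \gcd\bigl(1,\, e_c(b)\bigr) \;=\; 1,
\]
and the hypothesis of Theorem \ref{th1} is satisfied.

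Finally, I would invoke Theorem \ref{th1} to conclude that equation \eqref{abc} has at most one solution in positive integers $(x,y,z)$, except when $(a,b,c)$ or $(b,a,c)$ equals one of $(3,5,2),(3,13,2),(2,5,3),(2,7,3)$, which is exactly the statement of Corollary \ref{coro1}. There is essentially no obstacle here: the entire content of the corollary is the observation that the ``$\pm 1 \pmod c$'' condition is the extremal case $e_c(\cdot)=1$ of the general coprime-orders hypothesis of Theorem \ref{th1}.
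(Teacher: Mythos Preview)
Your proof is correct and matches the paper's own treatment: the paper simply states that Corollary \ref{coro1} is an immediate consequence of Theorem \ref{th1}, and your argument makes explicit precisely the one-line observation underlying that claim, namely that $a\equiv\pm1\pmod{c}$ forces $e_c(a)=1$ and hence $\gcd(e_c(a),e_c(b))=1$.
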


Actually it will turn out that the above corollary is essentially equivalent to the first theorem (cf.~Section \ref{sec-reduce}).
The work for proving Corollary \ref{coro1} was motivated by attempting to obtain a 3-variable generalization of Bennett's result \cite[Theorem 1.6]{Be_cjm_01} which seems to be motivated for verifying his open question on equation \eqref{pillai} when $a$ takes fixed values.
It is worth noting that Corollary \ref{coro1} proves Conjecture \ref{atmost1conj} for $c=2,3$ and 6, and in particular this provides an analytic proof of the celebrated theorem of Scott \cite[Theorem 6;\,$p=2$]{Sc} solving Conjecture \ref{atmost1conj} for $c=2$ in a purely algebraic manner over imaginary quadratic fields, which is one of the most important novelties of this paper.

To state the next result, which is a non-explicit but effective generalization of Theorem \ref{th1}, we prepare some notation.
For a finite set $S$ of prime numbers, we denote by $\mathcal A[S]$ the $S$-{\it part} of a nonzero integer $\mathcal A$, namely,
\[
\mathcal A[S]=\prod_{p \in S} p^{\,\nu_p(\mathcal A)},
\]
where $\nu_p$ denotes the $p$-adic valuation.
For simplicity and convenience, we write $\mathcal A[\{p\}]=\mathcal A[p]$ and $\mathcal A[\emptyset]=1$, respectively.

\begin{thm}\label{th2}
Let $S$ be a $($possibly empty$)$ set of odd prime factors of $c.$
Define $M_S$ and $c_S$ as either
\begin{align*}
&M_S=\prod_{p \in S}\,p, \ \ c_S=\max\{c[S],c[2]\}; \ \ \ \text{or} \tag{I}\\
&M_S=4\prod_{p \in S}\,p, \ \ c_S=\frac{1}{2}\,c[ S \cup \{2\} ]. \tag{II}
\end{align*}
Assume that the extended multiplicative orders of $a$ and $b$ modulo $M_S$ are relatively prime and that $c_S>\sqrt{c}.$
Then there is at most one solution to equation $\eqref{abc},$ except when $(a,b,c)$ satisfies at least one of the following two restrictions\,{\rm :}
\begin{align*}
&\bullet \, \max\{a,b,c\}<\mathcal C_1;\\
&\bullet \, \max\{a,b\}<\exp\biggl(\frac{\mathcal C_2}{(\log c_S)/\log \sqrt{c}\,-1}\biggl), \ \ c_S<\exp(\mathcal C_2) \sqrt{c},
\end{align*}
where $\mathcal C_1$ and $\mathcal C_2$ are some positive absolute constants which are effectively computable.
\end{thm}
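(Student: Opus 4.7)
The plan is to mimic the proof of Theorem~\ref{th1}, with $M_S$ replacing $c$ in the congruence step and $c_S$ replacing $c$ as the base for the $p$-adic Baker argument. Throughout, suppose that equation~\eqref{abc} admits two distinct solutions $(x_i,y_i,z_i)$, $i=1,2$, ordered so that $z_1\le z_2$; the goal is to show that then at least one of the two exceptional bounds in the statement must hold.

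First, by the standard elimination between the two equations $a^{x_i}+b^{y_i}=c^{z_i}$---the same identity used in the proofs of Proposition~\ref{atmost2} and Theorem~\ref{th1}---I would establish
\[
c^{z_1}\,\bigm|\,a^{x_2-x_1}-b^{y_2-y_1},
\]
the right-hand side being non-zero because the triples are distinct. Since $c_S\mid c$, in particular $c_S^{z_1}\mid a^{x_2-x_1}-b^{y_2-y_1}$. Reducing this congruence modulo $M_S$ and invoking the hypothesis $\gcd(E_a,E_b)=1$ on the extended multiplicative orders of $a$ and $b$ modulo $M_S$, the elementary order calculus already used for Theorem~\ref{th1} forces $a^{x_2-x_1}\equiv b^{y_2-y_1}\equiv\pm1\pmod{M_S}$, whence $E_a\mid x_2-x_1$ and $E_b\mid y_2-y_1$. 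Cases~(I) and~(II) differ only in the $2$-adic bookkeeping: the factor $4$ in $M_S$ together with the coefficient $1/2$ in $c_S$ are chosen so that the argument carries over analogously to Scott's $\mathbb{Z}[i]$-treatment invoked after Corollary~\ref{coro1}.

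Next, I would apply the non-Archimedean version of Baker's theorem on linear forms in two logarithms (Bugeaud--Laurent over $\mathbb Q$ in case~(I), and its analogue over an imaginary quadratic field in case~(II)) to the non-zero integer $a^{x_2-x_1}-b^{y_2-y_1}$ at a prime $p\mid c_S$ chosen to maximise $\nu_p(c)\log p$. Combined with the $p$-adic lower bound coming from the $c_S^{z_1}$-divisibility, this yields
\[
z_1\log c_S\;\le\;C_3\,(\log a)(\log b)\,(\log B)^2,\qquad B:=\max\{x_2,y_2\},
\]
for an absolute effective constant $C_3$. In parallel, the classical complex Baker bound applied to $|x_2\log a-z_2\log c|$ (via $a^{x_2}/c^{z_2}=1-b^{y_2}/c^{z_2}$) gives $z_2\log c\le C_4\,(\log a)(\log b)\log B$, whence $\log B\ll\log z_2+O(1)$.

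Finally, combining these two estimates with the gap $z_2\ge z_1+1\ge 1$ and the hypothesis $c_S>\sqrt{c}$, and writing $\lambda:=(\log c_S)/\log\sqrt{c}-1>0$, one rearranges to
\[
\lambda\cdot\log\max\{a,b\}\;\le\;\mathcal C_2,\qquad \log c_S\;\le\;\log\sqrt{c}+\mathcal C_2,
\]
which is precisely the second exceptional condition in the statement; the first exception $\max\{a,b,c\}<\mathcal C_1$ absorbs any residual regime in which, say, $c$ itself is too small for the Baker estimates to be usefully inverted. The principal obstacle will be this final balancing step: to bring the break-even threshold precisely to $c_S=\sqrt{c}$, one must carefully track the dependence on $\log a$ and $\log b$ inside both Baker bounds, so that the quadratic-in-$\log B$ $p$-adic estimate is exactly defeated by the linear-in-$\log B$ complex estimate under the assumption $c_S>\sqrt{c}$. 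A secondary delicate point is the treatment of case~(II), which requires passing to an imaginary quadratic field to apply a $2$-adic analogue of Bugeaud--Laurent, parallelling the argument used to extend Theorem~\ref{th1} to Corollary~\ref{coro1} for $c=2$.
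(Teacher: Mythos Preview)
Your proposal has a fundamental error at the outset: the divisibility $c^{z_1}\mid a^{x_2-x_1}-b^{y_2-y_1}$ does not follow from the two equations (indeed $x_2-x_1$ or $y_2-y_1$ may be negative, and even when both are positive the relation is false in general). What the elimination actually yields, and what the paper uses throughout, is $h^{\Delta}\equiv\pm1\pmod{c^{z_1}}$ for each $h\in\{a,b\}$, where $\Delta=|x_1y_2-x_2y_1|$ is the \emph{cross product} of the exponent pairs (see Lemma~\ref{basic-cong}). Via Lemma~\ref{padic-lem} this gives $c[S]^{z_1}\mid(h-\delta_h)\cdot\Delta$, i.e.\ a congruence $h\equiv\delta_h\pmod{\mathcal D}$ with $\mathcal D=c_S^{z_1}/\Delta'$ for a suitable divisor $\Delta'$ of $\Delta$ (Lemma~\ref{div-th3}). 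No divisibility of the shape you state is ever established.

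More importantly, you miss the central mechanism of the proof, which does \emph{not} combine a $p$-adic Baker bound with a complex one as you outline. Instead the paper applies the $p$-adic estimate (Proposition~\ref{Bu-madic}) \emph{twice}: a first application with the small modulus $M=P$ (or $M=4$) gives only $Z\ll(\log\log c)^2(\log a)(\log b)$ (Lemma~\ref{1st-bound-th2}), which is then used to bound $\Delta$ and hence to show $\log\mathcal D\gg z_1\log c$. The decisive second application with modulus $M=\mathcal D$ then yields $z_1Z\ll(\log a)(\log b)/\log^2 c$ (Lemma~\ref{second-bound-th2}), forcing $\max\{x_i,y_i\}\ll 1$ and $\Delta\ll 1$. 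From here the argument splits on whether $\max\{x_1,y_1\}>1$ (Lemma~\ref{xy>1-th2}, where $c_S^{z_1}\ll\mathcal D\le\min\{a,b\}+1<c^{z_1/2}+1$ gives the second exceptional bound directly) or $(x_1,y_1)=(1,1)$ (handled by the method of Lemmas~\ref{sols-calX=1-Z<2z} and~\ref{Zge2z}). Finally, case~(II) is \emph{not} handled via an imaginary quadratic field: the paper simply observes that $\delta_{h,4}=\delta_h$ in that case and reduces case~(II) with $4\nmid c$ back to case~(I), so the entire argument stays over~$\mathbb Q$.
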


Theorem \ref{th1} is (almost) included in the above theorem for the case where $S$ is the set of odd prime factors of $c$.
Theorem \ref{th2} has a fruitful application to cases where $c$ takes fixed values.
The following two results are obtained by applying Theorem \ref{th2} for the case where $S$ is the intersection of the set of prime factors of $c$ and $\{3\}$, and for setting $c=p^n \cdot k$ with $p \in \{2,3\}$, $k$ a positive integer prime to $p$ and $n$ suitably large relative to $k$, respectively.

\begin{cor}\label{c2c3}
For any fixed $c$ satisfying $\max\{c[2],c[3]\}>\sqrt{c},$ there is at most one solution to equation $\eqref{abc},$ except for only finitely many pairs of $a$ and $b.$
\end{cor}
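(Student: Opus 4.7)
The plan is to apply Theorem \ref{th2} directly, using formulation (I) with the specific choice $S = \{3\} \cap \{p \text{ prime} : p \mid c\}$, as suggested in the remark preceding the corollary. This is the minimal meaningful choice, and it makes the arithmetic hypothesis of the theorem completely automatic.

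First I would verify the two hypotheses. Under formulation (I) with this $S$, the modulus $M_S$ is either $1$ or $3$. Every integer coprime to $M_S$ is automatically congruent to $\pm 1$ modulo $M_S$, so its extended multiplicative order modulo $M_S$ equals $1$; consequently the extended orders of $a$ and $b$ modulo $M_S$ are trivially coprime for every admissible pair $(a,b)$. For the size condition, $c_S = \max\{c[S], c[2]\}$ by formulation (I). If $3 \mid c$ then $c[S] = c[3]$, while if $3 \nmid c$ then $c[S] = 1$ and $c[3] = 1$; in either case $c_S = \max\{c[2], c[3]\}$, so the standing assumption $\max\{c[2], c[3]\} > \sqrt{c}$ of the corollary is exactly the inequality $c_S > \sqrt{c}$ needed to invoke Theorem \ref{th2}.

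Having verified the hypotheses, Theorem \ref{th2} forces every pair $(a,b)$ that admits more than one solution to equation \eqref{abc} to satisfy one of the two exceptional restrictions. The first restriction, $\max\{a,b,c\} < \mathcal{C}_1$, manifestly leaves only finitely many candidate pairs $(a,b)$. For the second restriction, since $c$ is held fixed, the quantity $c_S$ is a determined constant, and the hypothesis $c_S > \sqrt{c}$ guarantees that $(\log c_S)/\log\sqrt{c} - 1$ is a fixed positive real number; consequently the bound $\exp(\mathcal{C}_2 / ((\log c_S)/\log\sqrt{c} - 1))$ on $\max\{a,b\}$ is a finite constant depending only on $c$, once again yielding finitely many pairs. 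Combining the two exceptional families gives the corollary.

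The argument is thus a straightforward reduction to Theorem \ref{th2}, and I expect no real obstacle beyond matching the hypotheses and recording that the exceptional bounds are effective in the fixed parameter $c$; all of the substantive work is absorbed into the preceding theorem.
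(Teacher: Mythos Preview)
Your proposal is correct and follows precisely the approach indicated in the paper: applying Theorem~\ref{th2} in formulation~(I) with $S=\{3\}\cap\{p:p\mid c\}$, so that $M_S\in\{1,3\}$ and the coprimality of extended orders is automatic, while $c_S=\max\{c[2],c[3]\}>\sqrt{c}$ by hypothesis. The paper does not spell out a separate proof of Corollary~\ref{c2c3} beyond this remark, and your verification of the two hypotheses together with the observation that both exceptional restrictions bound $\max\{a,b\}$ by a constant depending only on the fixed $c$ is exactly what is needed.
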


\begin{cor}
Conjecture $\ref{atmost1conj}$ is true for infinitely many values of $c$ which are not perfect powers.
\end{cor}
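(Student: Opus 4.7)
The plan is to apply Theorem \ref{th2} to an explicit infinite family of non-perfect-power moduli. Fix an odd integer $k > 1$ that is not a perfect power, for instance $k = 3$, and consider $c = 2^n k$ as $n$ ranges over large positive integers. Any factorization $c = m^d$ with $d \geq 2$ and $m = 2^a m_0$ ($m_0$ odd) would force $k = m_0^d$, contradicting the choice of $k$; hence $c$ is not a perfect power for any $n \geq 1$, and distinct values of $n$ yield distinct values of $c$.

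To invoke Theorem \ref{th2}, I take $S = \emptyset$ under option (II), so that $M_S = 4$ and $c_S = \frac{1}{2}\,c[2] = 2^{n-1}$. Since $a$ and $b$ are coprime to the even number $c$, they are both odd, hence congruent to $\pm 1$ modulo $4$; their extended multiplicative orders modulo $4$ therefore both equal $1$, and the order-coprimality hypothesis is automatic. The size hypothesis $c_S > \sqrt{c}$ amounts to $2^{n-1} > 2^{n/2}\sqrt{k}$, which holds as soon as $n > 2 + \log_2 k$.

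It then suffices to rule out both exceptional clauses in Theorem \ref{th2} for $n$ large enough. The first clause bounds $\max\{a,b,c\}$ by the absolute constant $\mathcal{C}_1$, hence fails once $c = 2^n k > \mathcal{C}_1$. For the second clause, it is enough to guarantee $c_S \geq \exp(\mathcal{C}_2)\sqrt{c}$, namely $2^{n/2 - 1} \geq \exp(\mathcal{C}_2)\sqrt{k}$, which holds for every $n$ exceeding an explicit bound depending only on $k$ and $\mathcal{C}_2$. Combining these observations, Theorem \ref{th2} applies unconditionally for all sufficiently large $n$, proving Conjecture \ref{atmost1conj} for $c = 2^n k$ and producing the required infinite family of non-perfect-power moduli. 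Since all the analytic work is already packaged inside Theorem \ref{th2}, no serious obstacle arises: the choice $S = \emptyset$ with option (II) is tailored precisely so that the order-coprimality hypothesis trivializes when $c$ is even, and the dominance of $2^n$ in $c$ forces the ratio $c_S/\sqrt{c}$ to grow without bound, eventually overwhelming the effective threshold $\exp(\mathcal{C}_2)$.
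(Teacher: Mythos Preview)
Your argument is correct and follows the same route the paper sketches: pick $c=p^{n}k$ with $p\in\{2,3\}$, $k$ prime to $p$, and $n$ large, then feed this into Theorem~\ref{th2} so that $c_S/\sqrt{c}\to\infty$ kills both exceptional clauses. Your specific choice $p=2$, $S=\emptyset$ under option~(II) (giving $M_S=4$, $c_S=2^{n-1}$) is one of the intended specializations; the paper also allows $p=3$ with $S=\{3\}$ under option~(I), but this only broadens the family and is not needed for the corollary.
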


Note that Corollary \ref{c2c3} says for each of $c$ with $\max\{c[2], c[3]\}>\sqrt{c}$ that all pairs of $a$ and $b$ for which equation \eqref{abc} has more than one solution can be effectively found in a finite number of steps.
Also, we emphasize that from our method it is difficult to obtain a uniform version on $c$ of Corollary \ref{c2c3}, and further it seems to be hopeless to handle all exceptional triples appearing from such a version.

Our final result, together with Corollary \ref{coro1}, confirms Conjecture \ref{atmost1conj} if $c$ is one of the Fermat primes found so far.
This is regarded as a 3-variable generalization of \cite[Corollary 1.7]{Be_cjm_01}.

\begin{thm}\label{th3}
If $c \in \{5,17,257,65537\},$ then Conjecture $\ref{atmost1conj}$ is true, namely, there is at most one solution to equation $\eqref{abc},$ except when $(a,b)$ or $(b,a)$ equals $(2,c-2).$
\end{thm}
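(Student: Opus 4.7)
The plan is to combine Corollary \ref{coro1} with a tailored residual analysis exploiting the Fermat-prime structure $c=2^k+1$. Since $(\mathbb{Z}/c\mathbb{Z})^*$ is cyclic of order $2^k$, every extended multiplicative order $e_c(\cdot)$ is a power of $2$, so $\gcd(e_c(a),e_c(b))=1$ holds if and only if one of these extended orders equals $1$, equivalently $a\equiv\pm 1\pmod c$ or $b\equiv\pm 1\pmod c$. In that case Corollary \ref{coro1} disposes of the claim; its four listed exceptions all have $c\in\{2,3\}$, so none is relevant here.

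It remains to analyze the residual range where $a,b\not\equiv\pm 1\pmod c$. The distinguished pair $(a,b)=(2,c-2)$ lies in this range and, via the identities $2+(c-2)=c$ and $2^{k+2}+(c-2)^2=c^2$ (which follows from $c^2-(c-2)^2=4c=2^{k+2}$), exhibits the two allowed solutions, so the goal is to show no other pair in this range admits more than one solution. Fixing a generator $g$ of $(\mathbb{Z}/c\mathbb{Z})^*$ and writing $a\equiv g^\alpha$, $b\equiv g^\beta\pmod c$, every solution $(x,y,z)$ satisfies the linear congruence $\alpha x-\beta y\equiv 2^{k-1}\pmod{2^k}$. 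Two distinct solutions $(x_i,y_i,z_i)$ therefore yield the key relation $\alpha(x_1-x_2)\equiv\beta(y_1-y_2)\pmod{2^k}$; because neither $\alpha$ nor $\beta$ is divisible by $2^{k-1}$ under the residual hypothesis, this forces a sizable $2$-adic valuation on one of the exponent differences.

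I would then feed this $2$-adic divisibility into the $p$-adic Baker-type inequality for linear forms in two logarithms (the same tool underlying Theorem \ref{th1}, specialized to $p=2$) to obtain an explicit upper bound on $\max(x_i,y_i,z_i)$, and combine it with the lower bound coming from the complex Baker inequality to reach an effective absolute constant $\mathcal C=\mathcal C(c)$ such that $\max(a,b)<\mathcal C$. The resulting finite range of candidate triples is then sieved by mod-$c$, mod-$c^2$ and mod-$2^n$ arguments supplemented by direct computer search over $(a,b,x,y,z)$; the pair $(2,c-2)$ emerges as the only survivor, and its two solutions are confirmed to be the only ones by an elementary $2$-adic analysis of $\nu_2(2^x+(c-2)^y-c^z)$ using $c-2=2^k-1$.

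The main obstacle will be the quantitative step of sharpening the residual $2$-adic congruence into a bound on $\max(a,b)$ small enough for the subsequent verification to be genuinely finite, especially for $c=65537$, where $k=16$ weakens both the $p$-adic Baker estimate and the sieve. A secondary difficulty is organizing the finite verification for $c=257$ and $c=65537$ so that the exponential search space is compressed by the rigid $2$-group structure of $(\mathbb{Z}/c^n\mathbb{Z})^*$ rather than enumerated exhaustively.
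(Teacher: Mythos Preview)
Your reduction to Corollary~\ref{coro1} is fine and matches the paper, but the residual analysis has a genuine gap. The congruence $\alpha(x_1-x_2)\equiv\beta(y_1-y_2)\pmod{2^k}$ lives modulo a \emph{fixed} small power of~$2$ (at most $2^{16}$), so it only constrains the residues of the exponent differences, not their size. There is no quantity here with large $2$-adic valuation to feed into a $p$-adic Baker inequality at $p=2$: the number $a^x+b^y=c^z$ is odd, and the exponent differences themselves are integers, not values of linear forms in logarithms. Consequently the step ``obtain an explicit upper bound on $\max(x_i,y_i,z_i)$'' does not go through, and the subsequent claim $\max(a,b)<\mathcal C(c)$ is unsupported. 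In fact the paper never bounds $\max(a,b)$ by an absolute constant; $a$ and $b$ can be astronomically large.

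The paper's route is structurally different and hinges on ingredients your plan does not mention. First, Scott's parity result (Proposition~\ref{twoclass}) forces one of the two solutions to have both exponents even, so that $a^{2X'}+b^{2Y'}=c^Z$. Second, writing $c=m^2+1$ and $\beta=m+i$, this factors over $\mathbb Z[i]$ and yields $\{a^{X'},b^{Y'}\}=\{|\operatorname{Re}\beta^Z|,|\operatorname{Im}\beta^Z|\}$, so $a,b$ are \emph{determined by $Z$} (after showing $X'=1$ or $Y'=1$ via Zsigmondy and the modular-approach results on signature $(2,4,n)$). Third, a $\bar\beta$-adic Baker estimate over $\mathbb Q(i)$ (Luca's idea, Lemma~\ref{min<<1}) combined with a complex Baker estimate bounds $Z$ absolutely; a Wieferich-type computation then pins down $\min\{z,Z\}$ to a handful of values, after which a short finite check finishes. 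The Fermat-prime hypothesis is used not just for the $2$-group structure of $(\mathbb Z/c\mathbb Z)^*$ but crucially for the Gaussian factorisation $c=\beta\bar\beta$.
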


The organization of this paper is as follows.
In the next section we show some ideas to reduce each theorem to one of its weak forms.
In each of the proofs of our theorems, we examine solutions to the system of two equations arising from exceptional triples $(a,b,c)$ which allow equation \eqref{abc} to have at least two solutions.
Towards the proof of Theorem \ref{th1}, Sections \ref{sec-th1-pre} and \ref{sec-th1-bound} are respectively devoted to finding several congruence restrictions for solutions, and to finding upper bounds for them in several cases by applying Bugeaud's results in \cite{Bu} on simultaneous non-Archimedean valuations on the difference between two powers of algebraic numbers.
Here the most important idea is found through applying Baker's method in its non-Archimedean analogue to a certain divisibility relation among solutions which plays a crucial role in the proof of Proposition \ref{atmost2}.
These results together leave us a finite search for proving Theorem \ref{th1}, and we sieve with it in Section \ref{sec-th1} by extensive use of computers, completing the proof.
The proof of Theorem \ref{th2} is basically similar to that of Theorem \ref{th1} and it is established in Section \ref{sec-th2}.
Sections 7 and 8 are devoted to prove Theorem \ref{th3}, where some other kinds of Baker's method and a striking result of Scott \cite{Sc} restricting parity information on unknown exponents appearing in the left-hand side of equations are also used as well as some results on ternary Diophantine equations based on the so-called modular approach.
In the final section we make some remarks on our results with a few problems for readers.

All computations in this paper were performed by a computer\footnote{Intel Core 7 11800H processor (with 8 cores) and 16GB of RAM} using the computer package MAGMA \cite{BoCaPl}.
The total computation time was about 2 weeks.

\section{Reducing to weak forms} \label{sec-reduce}%

Let $M$ and $\mathcal A$ be as in Definition $\ref{extend}$.
The extended multiplicative order of $\mathcal A$ modulo $M$, denoted by $e_M(\mathcal A)$, has similar properties to those of the multiplicative order of $\mathcal A$ modulo $M$.
We state some of them in the following lemma without their proof.

\begin{lem} \label{property}
Assume that $M>2.$
Define $\epsilon_0 \in \{1,-1\}$ by $\mathcal A^{e_M(\mathcal A)} \equiv \epsilon_0 \pmod{M}.$
Then the following hold.
\begin{itemize}
\item[\rm (i)]
Assume that $\mathcal A^n \equiv \epsilon \pmod{M}$ for some $n \in \N$ and $\epsilon \in \{1,-1\}.$
Then $n$ is a multiple of $e_M(\mathcal A)$ and $\epsilon={\epsilon_0}^{n/e_M(\mathcal A)}.$
\item[\rm (ii)] If $\epsilon_0=-1,$ then the multiplicative order of $\mathcal A$ modulo $M$ equals $2\,e_M(\mathcal A).$
\item[\rm (iii)] $e_M(\mathcal A^k)=e_M(\mathcal A)/\gcd(e_M(\mathcal A),k)$ for any positive integer $k.$
\end{itemize}
\end{lem}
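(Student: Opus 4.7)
The plan is to prove (i) first by division with remainder, from which (ii) and (iii) follow almost immediately. Write $E := e_M(\mathcal{A})$ throughout. The role of the hypothesis $M > 2$ is only to guarantee $1 \not\equiv -1 \pmod M$, which makes the sign $\epsilon_0$ in Definition \ref{extend} unambiguous.

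For (i), I would apply the division algorithm to write $n = qE + r$ with $0 \le r < E$. A trivial induction on $q$ gives $\mathcal{A}^{qE} \equiv \epsilon_0^{\,q} \pmod M$, so the hypothesis $\mathcal{A}^{n} \equiv \epsilon \pmod M$ rearranges to
\[
\mathcal{A}^{r} \;\equiv\; \epsilon\,\epsilon_0^{\,q} \;\in\; \{1,-1\} \pmod M
\]
(using $\epsilon_0^{\,2} = 1$). The minimality of $E$ in Definition \ref{extend} forbids $0 < r < E$, so $r = 0$; hence $E \mid n$, and substituting $r=0$ gives $\epsilon = \epsilon_0^{\,q} = \epsilon_0^{\,n/E}$.

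For (ii), assume $\epsilon_0 = -1$. Then $\mathcal{A}^{2E} \equiv 1 \pmod M$, so $\mathrm{ord}_M(\mathcal{A}) \mid 2E$; by (i) applied with $\epsilon = 1$ it is also a positive multiple of $E$; and it cannot equal $E$ because $\mathcal{A}^{E} \equiv -1 \not\equiv 1 \pmod M$. Hence $\mathrm{ord}_M(\mathcal{A}) = 2E$. For (iii), set $d = \gcd(E,k)$ and $E' := e_M(\mathcal{A}^{k})$. On one hand, $(\mathcal{A}^{k})^{E/d} = (\mathcal{A}^{E})^{k/d} \equiv \epsilon_0^{\,k/d} \equiv \pm 1 \pmod M$, so $E' \mid E/d$. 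On the other, $(\mathcal{A}^{k})^{E'} \equiv \pm 1 \pmod M$ combined with (i) gives $E \mid kE'$, i.e., $E/d \mid (k/d)E'$; coprimality of $E/d$ and $k/d$ then gives $E/d \mid E'$. The two divisibilities force $E' = E/d$.

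There is no real obstacle to this lemma; the whole thing is a straightforward exercise in the division algorithm. The only subtlety worth a little care is the sign bookkeeping in (i), namely noting that one must allow $\mathcal{A}^{r}$ to land on either of $\pm 1$ (not just on $1$) before invoking minimality to conclude $r = 0$, and that the step from ``$\equiv \pm 1$'' to a definite sign in the final equality relies on $M > 2$ so that $+1$ and $-1$ are genuinely distinct residues.
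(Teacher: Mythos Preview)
Your proof is correct and complete. The paper itself omits the proof entirely, stating only that the properties are analogous to those of the ordinary multiplicative order and leaving them to the reader; your division-with-remainder argument for (i) and the deductions of (ii) and (iii) from it are exactly the standard route one would expect.
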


The next lemma gives a simple but non-trivial divisibility property of solutions.

\begin{lem} \label{trick}
Let $(x,y,z)$ be a solution to equation $\eqref{abc}.$
Then
\[ e_d(b)\,x \equiv 0 \mod{e_d(a)}, \quad e_d(a)\,y \equiv 0 \mod{e_d(b)} \]
for any positive divisor $d$ of $c$ with $d>2.$
\end{lem}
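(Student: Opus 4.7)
The plan is to reduce the equation modulo $d$ and then invoke Lemma~\ref{property}(i) after raising the resulting congruence to an appropriate power. The argument is entirely elementary; the only bookkeeping concerns the $\pm 1$ ambiguity intrinsic to the extended order.

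First I would observe that since $a,b,c$ are pairwise coprime and $d$ divides $c$, both $a$ and $b$ are coprime to $d$, so the extended orders $e_d(a)$ and $e_d(b)$ are well-defined. Reducing $a^x+b^y=c^z$ modulo $d$ gives
\[
a^x \equiv -b^y \pmod{d}.
\]
Raising this congruence to the $e_d(b)$-th power and using the defining relation $b^{e_d(b)} \equiv \pm 1 \pmod{d}$ yields
\[
a^{x\,e_d(b)} \equiv (-1)^{e_d(b)} \bigl(b^{e_d(b)}\bigr)^{y} \equiv \pm 1 \pmod{d}.
\]
Since $d>2$, Lemma~\ref{property}(i) then forces $e_d(a)$ to divide $x\,e_d(b)$, which is the first claimed divisibility. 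The second divisibility follows by the symmetric argument, starting from $b^y \equiv -a^x \pmod{d}$ and raising to the $e_d(a)$-th power.

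The hypothesis $d>2$ is essential, since it is exactly what ensures $+1 \not\equiv -1 \pmod{d}$, so that the conclusion ``$a^n \equiv \pm 1 \pmod{d}$'' is a genuine constraint to which Lemma~\ref{property}(i) can be applied. This is the only subtle point in an otherwise mechanical argument, so I do not anticipate a real obstacle.
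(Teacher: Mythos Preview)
Your proof is correct and follows essentially the same approach as the paper: reduce modulo $d$, raise the resulting congruence to the extended order of one of $a,b$, and apply Lemma~\ref{property}(i) to conclude the divisibility, then argue the other half by symmetry. The only cosmetic difference is that the paper raises to the $e_d(a)$-th power first (obtaining $e_d(b)\mid e_d(a)\,y$) whereas you raise to the $e_d(b)$-th power first; the logic is identical.
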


\begin{proof}
Put $e_a:=e_d(a)$ and $e_b:=e_d(b)$.
Use the congruence $a^x \equiv - b^y \pmod{d}$ obtained from equation \eqref{abc} reduced modulo $d$.
Raising both sides of this congruence to the $e_a$-th power, one finds that $b^{e_a y} \equiv \pm (a^{e_a})^x \equiv \pm 1 \pmod{d}$.
Lemma \ref{property}\,(i) tells one that $e_b \mid e_a y$.
One also obtains $e_a \mid e_b x$ by symmetry of $a,b$.
\end{proof}

The following lemma tells us that proving Conjecture $\ref{atmost1conj}$ is reduced to studying a specialization of it.
Indeed, this fact will be applied for each of the proofs of our three theorems in the forthcoming sections.

\begin{lem} \label{weakform}
Let $d$ be a positive divisor of $c$ with $d>2.$
Define the positive integers $A$ and $B$ by
\[
A=a^{\,e_d(a)/g}, \quad B=b^{\,e_d(b)/g},
\]
where $g=\gcd(e_d(a),e_d(b)).$
Then the following hold.
\begin{itemize}
\item[\rm (i)]
The number of solutions to equation $\eqref{abc}$ equals that to equation \eqref{abc} corresponding to the triple $(A,B,c).$
\item[\rm (ii)]
The extended multiplicative orders of $A$ and $B$ modulo $d$ equal $g.$
\item[\rm (iii)]
$(a,b,c)$ belongs to set \eqref{excep-set} if and only if $(A,B,c)$ belongs to the same set.
\end{itemize}
\end{lem}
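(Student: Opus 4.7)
The plan is to handle the three parts in the order (ii), (i), (iii). Part (ii) is a direct application of Lemma \ref{property}(iii): setting $\alpha = e_d(a)/g$, which is a positive integer dividing $e_d(a)$, one obtains
\[
e_d(A)=e_d(a^{\alpha})=\frac{e_d(a)}{\gcd(e_d(a),\alpha)}=\frac{e_d(a)}{\alpha}=g,
\]
and the identical computation for $b$ yields $e_d(B)=g$.

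For (i), put also $\beta=e_d(b)/g$, so that $\gcd(\alpha,\beta)=1$. Given any solution $(x,y,z)$ to equation \eqref{abc} for $(a,b,c)$, Lemma \ref{trick} supplies $e_d(a)\mid e_d(b)\,x$, i.e., $g\alpha\mid g\beta x$; cancelling $g$ and using $\gcd(\alpha,\beta)=1$ forces $\alpha\mid x$, and by symmetry $\beta\mid y$. Writing $x=\alpha X$ and $y=\beta Y$ converts the equation into $A^X+B^Y=c^z$, and the inverse map $(X,Y,z)\mapsto(\alpha X,\beta Y,z)$ produces the desired bijection between the two solution sets.

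For (iii), the forward implication reduces, for each exceptional $(a,b,c)\in\eqref{excep-set}$ with $c$ admitting a divisor $d>2$, to the claim $e_d(a)=e_d(b)$, so that $\alpha=\beta=1$ and $(A,B,c)=(a,b,c)$. The finitely many explicit triples are verified by direct computation, and for the infinite family $(2,2^k-1,2^k+1)$ one uses the congruence $2^k-1\equiv -2\pmod{2^k+1}$ together with Lemma \ref{property} to obtain $e_d(2)=e_d(2^k-1)$ for every divisor $d$ of $2^k+1$. The converse implication exploits the observation that no $(a,b)$-entry of \eqref{excep-set} is a perfect power; hence $A=a^{\alpha}$ being such an entry forces $\alpha=1$, and similarly $\beta=1$, so $(a,b,c)=(A,B,c)$.

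The main obstacle will be the verification step in (iii) — in particular, checking that $e_d(a)=e_d(b)$ holds uniformly across divisors $d>2$ for every exceptional triple, and confirming the non-perfect-power-ness of the entries of \eqref{excep-set}, which for the infinite family $2^k\pm 1$ requires either an elementary $2$-adic argument or Mih\u{a}ilescu's theorem. The remaining machinery of the lemma is essentially bookkeeping with Lemmas \ref{property} and \ref{trick}.
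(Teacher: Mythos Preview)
Your arguments for (ii) and (i) are correct and coincide with the paper's own proof, which simply says less. Your converse implication in (iii) is also correct, and this is in fact the only direction of (iii) the paper ever invokes (see the reduction at the beginning of Section~\ref{sec-th1-pre} and the note immediately following the lemma).

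Your forward implication in (iii), however, rests on the claim that $e_d(a)=e_d(b)$ for every exceptional triple and every admissible divisor $d>2$ of $c$, and this claim is false. Take $(a,b,c)=(2,3,259)$ with $d=c=259$: the paper's own table after Theorem~\ref{th1} records $e_{259}(2)=36$ and $e_{259}(3)=9$, so $g=9$, $\alpha=4$, $\beta=1$, and $(A,B,c)=(16,3,259)$, which is not a member of \eqref{excep-set}. (The same thing happens with $d=37$.) Thus not only does your argument break at this point, but assertion~(iii) as literally stated already fails for this triple. The paper finesses this by calling (iii) ``immediate'' and, in the note after the lemma, restricting its use to the specific subsets of \eqref{excep-set} that arise in Theorems~\ref{th1} and~\ref{th3}; for those subsets ($c\in\{2,3\}$ and $c\in\{5,17,257,65537\}$, all prime or with no divisor $d>2$) your case-by-case verification does go through. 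So your strategy is sound for the intended applications, but you should not present the forward direction of (iii) as valid in the generality stated.
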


\begin{proof}
Note that $A,B>1$ and $\gcd(A,B,c)=1$.
Put $e_a:=e_d(a),\,e_b:=e_d(b)$ for simplicity.\par
(i) Let $(x,y,z)$ be a solution to equation \eqref{abc}.
By Lemma \ref{trick}, it follows that
\[
x \equiv 0 \mod{e_a/g}, \quad y \equiv 0 \mod{e_b/g}.
\]
This leads to
\[
A^{ \,x / (e_a/g) } + B^{ \,y / (e_b/g) } = c^z.
\]
It is clear that $(\,x / (e_a/g),\,y / (e_b/g),\,z\,)$ is a solution to equation \eqref{abc} for the triple $(A,B,c)$.
This correspondence proves the assertion.\par
(ii) This follows from Lemma \ref{property}\,(iii).\par
(iii) This is immediate.
\end{proof}

Note that assertion (iii) of the above lemma can be applied for the subsets of \eqref{excep-set} appearing in the statements of Theorems \ref{th1} and \ref{th3}.

\section{Preliminaries for Theorem \ref{th1}} \label{sec-th1-pre}%

For the proof of Theorem \ref{th1}, it suffices to prove that $(a,b,c)$ has to equal one of $(5,3,2),(13,3,2),(5,2,3)$ and $(7,2,3)$, whenever equation \eqref{abc}, with each of $a$ and $b$ congruent to $1$ or $-1$ modulo $c$, has at least two solutions.
Indeed, suppose that this is established, and that equation \eqref{abc} has at least two solutions for some triple $(a,b,c)$ satisfying $\gcd(e_c(a),e_c(b))=1$.
Then Lemma \ref{weakform}\,(i,\,ii) with $d=c$ tells us that the equation $A^x+B^y=c^z$ has at least two solutions, with $e_c(A)=e_c(B)=\gcd(e_c(a),e_c(b))=1$.
Thus one may conclude that $(A,B,c)$ equals one of the four exceptional triples mentioned before, so that the same holds for $(a,b,c)$.
Therefore, we assume that each of $a$ and $b$ is congruent to $1$ or $-1$ modulo $c$.
Clearly it suffices to consider only the case where $c$ is not a perfect power, while noting that one can not always assume both $a$ and $b$ are not perfect powers.

For each $h \in \{a,b\}$, we can uniquely define $\delta_h \in \{1,-1\}$ by the following congruence:
\begin{eqnarray}\label{cong-c'}
h \equiv \delta_h \mod {c'},
\end{eqnarray}
where
\[ c':= \begin{cases}
\, 4 & \text{if $c=2$},\\
\, c & \text{if $c>2$}.
\end{cases}\]
Note that
\[ \max\{a,b\} \ge c'+1, \quad \min\{a,b\} \ge c'-1.\]
Below, we often let $h$ denote any of $a$ and $b$.

We begin with the following lemma.

\begin{lem}\label{coprime}
Let $(x,y,z)$ be a solution to equation $\eqref{abc}.$
Then the following hold.
\begin{itemize}
\item[\rm (i)]
One of the following cases holds.
\[ \left\{ \begin{array}{lllll}
\delta_a=1, &\delta_b=-1, &y \text{ is odd\,}; \\
\delta_a=-1, &\delta_b=1, &x \text{ is odd\,}; \\
\delta_a=-1, &\delta_b=-1, &x \not\equiv y \pmod{2}.
\end{array} \right. \]
\item[\rm (ii)]
$x$ and $y$ are relatively prime.
\end{itemize}
\end{lem}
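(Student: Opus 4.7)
The plan is to prove the two parts in order, with (ii) bootstrapping from the case analysis in (i); both rest on reducing equation \eqref{abc} modulo $c'$ together with the congruences $a \equiv \delta_a$ and $b \equiv \delta_b \pmod{c'}$.

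For part (i), I first check that $c' \mid c^z$. This is trivial when $c > 2$, and when $c = 2$ the hypothesis that $a, b$ are coprime to $2$ and $>1$ gives $\min\{a, b\} \geq 3$, so $a^x + b^y \geq 6$ forces $z \geq 3$ and hence $4 \mid 2^z$. Reducing modulo $c'$ then yields $\delta_a^x + \delta_b^y \equiv 0 \pmod{c'}$. Since $c' \geq 3$, the pair $(\delta_a, \delta_b) = (1, 1)$ is ruled out, while the three remaining sign patterns give precisely the three parity conclusions listed in the lemma.

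For part (ii), I note from (i) that in every case at least one of $x, y$ is odd, so $\gcd(x, y)$ is odd. Suppose toward a contradiction that some odd prime $p$ divides $\gcd(x, y)$, and set $u := a^{x/p}$, $v := b^{y/p}$. Then $u^p + v^p = c^z$ with $\gcd(u, v) = 1$ and $u, v \geq 2$, and one has the standard factorization $u^p + v^p = (u + v) N$, where $N := \sum_{i=0}^{p-1} (-1)^i u^{p-1-i} v^i$ and $\gcd(u + v, N) \in \{1, p\}$. The crucial point is that $c' \mid u + v$: since $p$ is odd, $x/p$ and $x$ have the same parity (and likewise for $y/p$), so re-running the case analysis from (i) with $x/p, y/p$ in place of $x, y$ yields $\delta_a^{x/p} + \delta_b^{y/p} \equiv 0 \pmod{c'}$.

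The rest is bookkeeping. If $p \nmid c$, then $u + v$ and $N$ are coprime divisors of $c^z$; since $c' \mid u + v$ already accounts for all prime factors of $c$, the factor $N$ is forced to equal $1$, contradicting the size estimate $N \geq v^{p-1}/2 \geq 2^{p-2}$ (assuming $v \geq u \geq 2$). If $p \mid c$, then $\gcd(u, v) = 1$ combined with $p \mid u + v$ forces $\nu_p(N) = 1$ by the lifting-the-exponent lemma; tracking $p$-adic valuations in $(u + v) N = c^z$ and applying a coprime-factorization argument to the parts away from $p$ then forces $N = p$, which contradicts $N \geq 2^{p-2}$ for $p \geq 5$ and is ruled out when $p = 3$ by the explicit inequality $u^2 - uv + v^2 \geq 4$ for $u, v \geq 2$. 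I expect the case $p \mid c$ to be the main technical hurdle, since it is the only place where $p$-adic bookkeeping is genuinely required.
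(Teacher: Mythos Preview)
Your argument is correct and follows the same route as the paper: factor $c^z=(u+v)N$, show that every prime of $c$ already divides $u+v$ so that $N\in\{1,p\}$, and derive a size contradiction. The only differences are cosmetic---the paper obtains $r\mid u+v$ for each prime $r\mid c$ directly from $a,b\equiv\pm1\pmod r$ (raising $\pm1$ to the odd power $p$ does nothing) rather than by recycling the parity classification of~(i), and it handles the cases $p\mid c$ and $p\nmid c$ uniformly instead of splitting them.
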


\begin{proof}
(i) It is easy to see that $c^z \equiv 0 \pmod{c'}$ since $z>1$ if $c=2$.
By congruence \eqref{cong-c'} one reduces equation \eqref{abc} modulo $c'$ to see that ${\delta_a}^x \equiv -{\delta_b}^y \pmod{c'}$.
This congruence is actually an equality, that is, ${\delta_a}^x=-{\delta_b}^y$, since $\delta_a, \delta_b \in \{1,-1\}$ and $c' >2$.
This implies the assertion.\par
(ii) Suppose on the contrary that $x,y$ have some common prime factor, say, $p$.
Note that $p$ is odd by (i).
Write $x=p x_0,y=p y_0$.
Define positive integers $L,R$ as follows:
\[
L:=a^{x_0}+b^{y_0}, \quad R:=\frac{(a^{x_0})^p+(b^{y_0})^p}{a^{x_0}+b^{y_0}}.
\]
Note that $R$ is odd with $R>p$.
Equation \eqref{abc} becomes
\begin{equation}\label{factor}
L \cdot R=c^z.
\end{equation}
By elementary number theory we know that $\gcd(L,R) \mid \{1,p\}$ and that $p \parallel R$ if $\gcd(L,R)=p$ (cf.~Lemma \ref{padic-lem} with $(U,V,N)=(a^{x_0},-b^{y_0},p)$ below).

Let $r$ be any prime factor of $c$.
It is obvious that $r$ divides $L$ or $R$, in particular,
\[
a^{x_0} + b^{y_0} \equiv 0 \mod{r} \quad \text{or} \quad (a^{x_0})^p + (b^{y_0})^p \equiv 0 \mod{r}.
\]
Since each of $a,b$ is congruent to $\pm 1$ modulo $r$ by the premise, and $p$ is odd, it follows that $(a^{x_0})^p \equiv a^{x_0} \pmod{r}$ and $(b^{y_0})^p \equiv b^{y_0} \pmod{r}$, so that
\[
a^{x_0} + b^{y_0} \equiv (a^{x_0})^p + (b^{y_0})^p \mod{r}.
\]
These congruences show that $L$ is divisible by $r$.

Applying the above argument with $r$ any prime factor of $R$, one concludes that $R$ has no prime factor other than $p$.
It follows that $R=p$, which is however absurd as $R>p$.
\end{proof}

Suppose that equation \eqref{abc} has two solutions, say $(x,y,z)$ and $(X,Y,Z)$, with $(x,y,z) \ne (X,Y,Z)$.
Then
\begin{eqnarray}
&a^x+b^y=c^z, \label{1st}\\
&a^X+b^Y=c^Z. \label{2nd}
\end{eqnarray}
Without loss of generality, we may assume that $z \le Z$.

From \eqref{1st} and \eqref{2nd}, it holds trivially that
\begin{equation}\label{trivial-ineqs}
x<\frac{\log c}{\log a}\,z, \ \ y<\frac{\log c}{\log b}\,z, \ \ X<\frac{\log c}{\log a}\,Z, \ \ Y<\frac{\log c}{\log b}\,Z.
\end{equation}

In what follows, we put
\[
\Delta:=|x Y-X y|.
\]
Note that $\Delta$ is nonzero in general (cf.~\cite[Lemma 3.3]{HuLe}).

In what follows, we put
\[ C:=
\begin{cases}
\,c & \text{if $c=2$ or $c \not\equiv 2 \pmod{4}$},\\
\,c/2 & \text{if $c>2$ and $c \equiv 2 \pmod{4}$}.
\end{cases} \]
It will turn out that the size of $C$ relative to $c$ is crucially important in several places in the proof of Theorem \ref{th1}.
In particular, we ask
\begin{equation}\label{essential}
C>\sqrt{c}, \quad \lim_{c \to \infty} \frac{C}{\sqrt{c}} = \infty.
\end{equation}

\begin{lem}\label{basic-cong}
$h^{\Delta} \equiv {\delta_h}^{\Delta} \pmod{C^z}$ for each $h \in \{a,b\}.$
\end{lem}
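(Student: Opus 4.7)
The plan is to prove the slightly stronger congruence $h^\Delta \equiv \delta_h^\Delta \pmod{c^z}$, from which the claim follows since $C \mid c$ and therefore $C^z \mid c^z$.

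The first step leverages the hypothesis $z \le Z$ to reduce both \eqref{1st} and \eqref{2nd} modulo $c^z$, yielding $a^x \equiv -b^y$ and $a^X \equiv -b^Y \pmod{c^z}$. Raising the first congruence to the $Y$-th power and the second to the $y$-th power produces two expressions for $\pm b^{yY}$ modulo $c^z$; cancelling $b^{yY}$ (valid since $\gcd(b,c)=1$) gives $a^{xY} \equiv (-1)^{Y-y} a^{Xy} \pmod{c^z}$. Using $\gcd(a,c)=1$ to divide by the smaller power of $a$, and observing that $(-1)^{Y-y} = (-1)^{y-Y}$, one obtains $a^{\Delta} \equiv (-1)^{Y-y} \pmod{c^z}$. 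The symmetric manipulation (swapping the roles of $a$ and $b$) yields $b^{\Delta} \equiv (-1)^{X-x} \pmod{c^z}$.

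It remains to check the sign matching $(-1)^{Y-y} = \delta_a^{\Delta}$ and $(-1)^{X-x} = \delta_b^{\Delta}$. Writing $\delta_a = (-1)^\alpha$, $\delta_b = (-1)^\beta$ with $\alpha,\beta \in \{0,1\}$, reduction of \eqref{1st} and \eqref{2nd} modulo $c'$ (the right-hand side vanishes because $c^z \equiv 0 \pmod{c'}$; the case $c=2$ uses $z \ge 2$) forces $\alpha x + \beta y \equiv \alpha X + \beta Y \equiv 1 \pmod 2$. Substituting $\alpha x \equiv 1 + \beta y$ and $\alpha X \equiv 1 + \beta Y \pmod 2$ into $\alpha\Delta = \alpha(xY - Xy)$ gives
\[
\alpha \Delta \equiv (1+\beta y)Y - (1+\beta Y)y \equiv Y - y \pmod 2,
\]
and by the symmetric calculation $\beta\Delta \equiv X - x \pmod 2$, which is precisely what is needed.

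This is essentially a bookkeeping result, with no serious obstacle. The only delicate point is the sign matching in the final step, which is handled uniformly by the reduction modulo $c'$ (a repackaging of the information encoded in Lemma \ref{coprime}(i)) rather than by splitting into the three cases of that lemma.
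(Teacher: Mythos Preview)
Your proof is correct and follows essentially the same approach as the paper: reduce both equations modulo $c^z$, manipulate to obtain $h^\Delta \equiv \pm 1 \pmod{c^z}$, and then pin down the sign via the congruence $h \equiv \delta_h \pmod{c'}$. The only cosmetic difference is that the paper identifies the sign by invoking Lemma~\ref{property}(i) (reducing $h^\Delta \equiv \varepsilon$ modulo $\gcd(c^z,c')>2$ and comparing with $h \equiv \delta_h$), whereas you carry out the equivalent parity bookkeeping $\alpha\Delta \equiv Y-y$, $\beta\Delta \equiv X-x \pmod 2$ explicitly; both arguments encode the same information from Lemma~\ref{coprime}(i).
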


\begin{proof}
Since $z \le Z$, one reduces equations \eqref{1st}, \eqref{2nd} modulo $c^z$ to see that
\[
a^x \equiv -b^y \mod c^z, \quad a^X \equiv -b^Y \mod c^z,
\]
respectively.
From these observe that
\[
a^{xY} \equiv (-b^y)^Y \equiv (-1)^Y (b^Y)^y \equiv (-1)^Y (-a^X)^y \equiv (-1)^{y+Y} a^{Xy} \mod{c^z}.
\]
Similarly, $b^{xY} \equiv (-1)^{x+X} b^{Xy} \pmod{c^z}$.
Since $a,b$ are prime to the modulus, these congruences imply
\[
h^{\Delta} \equiv \varepsilon \mod c^z
\]
for some $\varepsilon \in \{1,-1\}$.
It suffices to show that $\varepsilon={\delta_h}^{\Delta}$.
Recall from congruence \eqref{cong-c'} that
\[ h \equiv \delta_h \mod {c'}. \]
Thus Lemma \ref{property}\,(i) tells that $\varepsilon={\delta_h}^{\Delta}$ if $\gcd(c^z,c')>2$.
It is clear that $\gcd(c^z,c')=c$ if $c>2$.
For $c=2$, since $z>1$ in equation \eqref{1st}, it follows that $\gcd(c^z,c')=\gcd(2^z,4)=4$.
To sum up, the lemma is proved.
\end{proof}

\begin{lem}\label{basic-cong-2}
For each $h \in \{a,b\},$ $h$ is congruent to $\delta_h$ modulo every prime factor of $c.$
Further, $h \equiv \delta_h \pmod{4}$ if either $c=2$ or $c \equiv 0 \pmod{4}.$
\end{lem}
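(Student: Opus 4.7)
The plan is to derive both parts of the lemma directly from the defining congruence \eqref{cong-c'} --- namely $h \equiv \delta_h \pmod{c'}$ --- by splitting into the two cases built into the definition of $c'$.

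In the case $c > 2$, where $c' = c$, I would simply read off from \eqref{cong-c'} that $h \equiv \delta_h \pmod c$, and then reduce this congruence modulo any prime factor $p$ of $c$ to obtain the first assertion. For the second, under the extra hypothesis $c \equiv 0 \pmod 4$, the integer $4$ itself divides $c$, so reducing the same congruence modulo $4$ immediately gives $h \equiv \delta_h \pmod 4$.

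In the case $c = 2$, where $c' = 4$, the congruence \eqref{cong-c'} already reads $h \equiv \delta_h \pmod 4$, which is precisely the second assertion in this sub-case (this is why the definition of $c'$ is inflated from $2$ to $4$: since $h$ is odd, $h \bmod 4 \in \{1,-1\}$ is well defined, and this is the content of $\delta_h$). Since the only prime factor of $c = 2$ is $2$ itself, a further reduction modulo $2$ then yields the first assertion.

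I do not anticipate any real obstacle here: Lemma \ref{basic-cong-2} is essentially a convenient repackaging of the standing hypothesis ``$h$ is congruent to $1$ or $-1$ modulo $c$'' (with the choice of sign encoded once and for all in $\delta_h$) into the separate congruence conditions modulo the individual prime factors of $c$, and modulo $4$ in the relevant cases, that will be invoked repeatedly in the subsequent analysis of the two hypothetical solutions. In particular, no appeal to the previous Lemma \ref{basic-cong} is needed, as the statement concerns $h$ itself rather than the power $h^{\Delta}$.
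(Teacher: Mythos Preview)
Your proposal is correct and follows exactly the same approach as the paper's proof, which simply states that both assertions follow by reducing congruence \eqref{cong-c'} modulo every prime factor of $c$ and modulo $4$, respectively. Your version merely makes the case split on $c=2$ versus $c>2$ explicit, which the paper leaves implicit.
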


\begin{proof}
The two assertions easily follow by reducing congruence \eqref{cong-c'} modulo every prime factor of $c$ and modulo 4, respectively.
\end{proof}

For any integer $M>1$, we denote by $\nu_M(\mathcal A)$ the $M$-adic valuation of a nonzero integer $\mathcal A$, that is, the highest exponent $e$ such that $M^e$ divides $\mathcal A$.
Further, if $p/q$ is a nonzero rational number with $p$ and $q$ coprime integers, we set $\nu_M(p/q):=\nu_M(p)-\nu_M(q)$.

The next lemma is well-known 
and gives a precise information on the $p$-adic valuations of integers in a special form.

\begin{lem} \label{padic-lem}
Let $p$ be a prime number.
Let $U$ and $V$ be relatively prime nonzero integers.
Assume that
\[
\begin{cases}
\, U \equiv V \mod{p} & \text{if $p \ne 2$},\\
\, U \equiv V \mod{4} & \text{if $p=2$}.
\end{cases}
\]
Then, for any positive integer $N,$
\[
\nu_p(U^N-V^N)=\nu_p(U-V)+\nu_p(N).
\]
\end{lem}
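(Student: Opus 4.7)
The plan is to prove this classical lifting-the-exponent identity via a two-step reduction. First I would write $N = p^n m$ with $\gcd(m,p) = 1$ and reduce to proving two sub-cases separately: (a) the formula when $\gcd(N,p) = 1$, yielding $\nu_p(U^N - V^N) = \nu_p(U-V)$, and (b) the formula when $N = p^n$, yielding $\nu_p(U^{p^n}-V^{p^n}) = \nu_p(U-V) + n$. Once both are known, applying (a) with $(U^{p^n}, V^{p^n})$ in place of $(U, V)$ handles the general case, provided one checks that the hypothesis of the lemma is preserved under this substitution.

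For case (a), I would use the factorization
\[
U^N - V^N = (U-V)\sum_{i=0}^{N-1} U^i V^{N-1-i}.
\]
Since $\gcd(U,V) = 1$ and $U \equiv V \pmod{p}$, the prime $p$ does not divide $V$. Each term of the sum is then congruent to $V^{N-1} \pmod{p}$, so the sum is congruent to $N V^{N-1} \pmod{p}$, which is a $p$-adic unit because $p \nmid N$ and $p \nmid V$. Hence the sum contributes $0$ to the $p$-adic valuation, and the identity follows.

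For case (b), I would induct on $n$, the base case $n = 1$ carrying the essential content. Writing $U = V + p^k t$ with $k = \nu_p(U-V) \ge 1$ and $p \nmid t$, the binomial expansion gives
\[
U^p - V^p = p\,V^{p-1} p^k t + \sum_{i=2}^{p-1} \binom{p}{i} V^{p-i}(p^k t)^i + (p^k t)^p.
\]
The leading term has $p$-adic valuation exactly $k+1$; for odd $p$, each middle term has valuation $\ge 1 + 2k \ge k+2$ since $\nu_p\bigl(\binom{p}{i}\bigr) = 1$ for $1 \le i \le p-1$, and the final term has valuation $pk \ge k+2$. Thus the leading term uniquely dominates, producing $\nu_p(U^p - V^p) = k + 1$. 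To pass from $n$ to $n+1$, I would reapply the $N = p$ case to $U' = U^{p^n}$, $V' = V^{p^n}$, noting that $\nu_p(U' - V') = k + n$ by the inductive hypothesis, which in particular certifies $U' \equiv V' \pmod{p}$ (and $\pmod{4}$ for $p=2$).

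The main obstacle is the prime $p = 2$, where the middle terms in the expansion above are absent and one is left with $2^{k+1} V t + 2^{2k} t^2$. The first term dominates precisely when $2k > k + 1$, i.e.\ when $k \ge 2$, which is exactly the reason the hypothesis is strengthened from $U \equiv V \pmod{2}$ to $U \equiv V \pmod{4}$ in the $p=2$ case. This same subtlety must be rechecked when propagating the hypothesis through the induction in $n$: after one step $U^2 \equiv V^2$ modulo $2^{k+1}$ with $k+1 \ge 3$, so the mod $4$ congruence is automatically preserved and the induction proceeds without further obstruction. (Alternatively, one may treat $p=2$ directly via $U^2 - V^2 = (U-V)(U+V)$ and observe $\nu_2(U+V) = 1$ since $V$ is odd and $U + V \equiv 2V \pmod{4}$.)
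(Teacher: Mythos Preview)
Your proof is correct and is the standard argument for the lifting-the-exponent lemma. The paper does not supply its own proof: it introduces Lemma~\ref{padic-lem} as ``well-known'' and states it without justification, so there is nothing to compare against beyond noting that your write-up fills in exactly the classical details the authors chose to omit.
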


\begin{lem}\label{div}
$C^z$ divides $\gcd(a-\delta_a,b-\delta_b) \cdot \Delta.$
\end{lem}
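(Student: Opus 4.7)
The plan is to verify the divisibility prime by prime, using the congruence from Lemma \ref{basic-cong} together with the lifting-the-exponent-type formula in Lemma \ref{padic-lem}.

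First I would fix an arbitrary prime divisor $p$ of $C$ and aim to show
\[
\nu_p\bigl(\gcd(a-\delta_a,b-\delta_b)\cdot \Delta\bigr) \,\ge\, z\,\nu_p(C).
\]
Lemma \ref{basic-cong} gives $h^{\Delta}\equiv \delta_h^{\Delta}\pmod{C^z}$ for each $h\in\{a,b\}$, hence
\[
\nu_p\bigl(h^{\Delta}-\delta_h^{\Delta}\bigr) \,\ge\, z\,\nu_p(C).
\]
My aim is then to evaluate the left-hand side via Lemma \ref{padic-lem} applied with $(U,V,N)=(h,\delta_h,\Delta)$; the coprimality hypothesis $\gcd(h,\delta_h)=1$ is automatic since $\delta_h=\pm 1$, so the only issue is the required congruence between $U$ and $V$.

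For this I would split on the parity of $p$. If $p$ is odd, then $p\mid c$, so Lemma \ref{basic-cong-2} gives $h\equiv \delta_h\pmod{p}$, exactly what Lemma \ref{padic-lem} needs. If $p=2$, then by the definition of $C$ the case $c>2$ with $c\equiv 2\pmod 4$ is excluded, leaving $c=2$ or $c\equiv 0\pmod 4$; in either of these situations the second assertion of Lemma \ref{basic-cong-2} delivers $h\equiv \delta_h\pmod{4}$, again matching the hypothesis of Lemma \ref{padic-lem}. In both subcases one obtains
\[
\nu_p\bigl(h^{\Delta}-\delta_h^{\Delta}\bigr) \,=\, \nu_p(h-\delta_h)+\nu_p(\Delta),
\]
so combining with the inequality above yields $\nu_p(h-\delta_h)+\nu_p(\Delta)\ge z\,\nu_p(C)$ for each of $h=a$ and $h=b$.

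Taking the smaller of the two contributions, I get
\[
\nu_p\bigl(\gcd(a-\delta_a,b-\delta_b)\bigr)+\nu_p(\Delta) \,=\, \min\bigl\{\nu_p(a-\delta_a),\nu_p(b-\delta_b)\bigr\}+\nu_p(\Delta) \,\ge\, z\,\nu_p(C).
\]
Since this holds for every prime $p\mid C$, multiplying through the prime factorisation of $C^z$ gives the desired conclusion $C^z\mid \gcd(a-\delta_a,b-\delta_b)\cdot\Delta$. The only real care needed in the argument is the bookkeeping at $p=2$ to ensure that the modulus-$4$ hypothesis of Lemma \ref{padic-lem} is available precisely when $2$ divides $C$; everything else is an immediate application of the previous three lemmas.
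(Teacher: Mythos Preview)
Your proof is correct and follows essentially the same route as the paper: fix a prime $p\mid C$, use Lemma~\ref{basic-cong-2} (with the case split at $p=2$) to verify the hypothesis of Lemma~\ref{padic-lem}, combine the resulting valuation identity with Lemma~\ref{basic-cong}, and then vary $p$. The only cosmetic difference is that the paper first deduces $C^z\mid (h-\delta_h)\Delta$ for each $h$ and then passes to the $\gcd$, whereas you take the minimum at the prime level; the content is identical.
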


\begin{proof}
Let $p$ be a prime factor of $C$.
If $p=2$, then $C$ is even, so that either $c=2$ or $c \equiv 0 \pmod{4}$.
Observe from Lemma \ref{basic-cong-2} that $h \equiv \delta_h \pmod{p}$, and that $h \equiv \delta_h \pmod{4}$ if $p=2$.
Then one may apply Lemma \ref{padic-lem} with $(U,V)=(h,\delta_h)$ and $N=\Delta$, to see that
\[
\nu_p (h^{\Delta}-{\delta_h}^{\Delta})=\nu_p( h-\delta_h) + \nu_p(\Delta) =\nu_p\bigr( (h-\delta_h) \cdot \Delta \bigr).
\]
From Lemma \ref{basic-cong} it follows that
\[
\nu_p (C^z) \le \nu_p\bigr( (h-\delta_h) \cdot \Delta \bigr).
\]
This inequality holds for an arbitrary prime factor $p$ of $C$.
Therefore, $C^z$ divides $(h-\delta_h) \cdot \Delta$, and the assertion follows.
\end{proof}

When $C=c/2$, the above lemma lacks the 2-adic divisibility information.
The following lemma complements it (cf.~\cite[Lemma 4.2]{MiPi}).

\begin{lem}\label{complement}
Assume that $c$ is even.
Then ${c[2]}^z$ divides $\gcd(a-\delta_{a,4},b-\delta_{b,4}) \cdot \Delta,$ where $\delta_{a,4} \in \{1,-1\}$ and $\delta_{b,4} \in \{1,-1\}$ are defined as $a \equiv \delta_{a,4} \pmod{4}$ and $b \equiv \delta_{b,4} \pmod{4},$ respectively.
\end{lem}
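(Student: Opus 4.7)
The plan is to mirror the argument of Lemma \ref{div}, but to track the $2$-adic information separately and with $\delta_{h,4}$ in place of $\delta_h$. Since $c$ is even and $a,b,c$ are pairwise coprime, both $a$ and $b$ are odd, so $\delta_{a,4},\delta_{b,4} \in \{1,-1\}$ are indeed well-defined by reduction modulo $4$. It suffices to show
\[ c[2]^z \mid (h - \delta_{h,4}) \cdot \Delta \quad \text{for each } h \in \{a,b\}, \]
from which the stated gcd divisibility follows at once by taking the minimum of the $2$-adic valuations.

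The key congruence to establish is
\[ h^{\Delta} \equiv \delta_{h,4}^{\,\Delta} \pmod{c[2]^z} \quad \text{for each } h \in \{a,b\}. \]
Repeating the derivation in the proof of Lemma \ref{basic-cong} verbatim but reducing equations \eqref{1st} and \eqref{2nd} modulo $c[2]^z$ (which is legitimate because $c[2]^z \mid c^z$) yields $h^{\Delta} \equiv \varepsilon_h \pmod{c[2]^z}$ for some $\varepsilon_h \in \{1,-1\}$. To pin down the sign, I would split into cases according to the parity structure of $c$. When $c=2$ or $c \equiv 0 \pmod{4}$, Lemma \ref{basic-cong-2} gives $h \equiv \delta_h \pmod{4}$, so $\delta_{h,4} = \delta_h$, and Lemma \ref{basic-cong} together with $c[2]^z \mid C^z$ already supplies the claim. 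In the delicate case $c \equiv 2 \pmod{4}$ with $c>2$, one has $c[2]^z = 2^z$; if $z \ge 2$ then $4 \mid 2^z$, and reducing the congruence further modulo $4$ together with $h \equiv \delta_{h,4} \pmod{4}$ forces $\varepsilon_h = \delta_{h,4}^{\,\Delta}$, while if $z=1$ the conclusion $2 \mid (h - \delta_{h,4})\cdot \Delta$ is trivial since $h$ is odd.

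Once the key congruence is in hand, Lemma \ref{padic-lem} applied with $p=2$, $U=h$, $V=\delta_{h,4}$, $N=\Delta$ (whose hypothesis $h \equiv \delta_{h,4} \pmod{4}$ is the definition of $\delta_{h,4}$) gives
\[ \nu_2(h^{\Delta} - \delta_{h,4}^{\,\Delta}) = \nu_2(h - \delta_{h,4}) + \nu_2(\Delta). \]
Combining with the congruence forces $\nu_2(c[2]^z) \le \nu_2\bigl((h - \delta_{h,4}) \cdot \Delta\bigr)$ for each $h \in \{a,b\}$, and taking the minimum over $h$ recovers $\nu_2\bigl(\gcd(a-\delta_{a,4}, b-\delta_{b,4}) \cdot \Delta\bigr)$, completing the proof. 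The main obstacle is the sign identification in the case $c \equiv 2 \pmod 4$ with $c>2$, where $\delta_h$ from \eqref{cong-c'} only carries information modulo the odd part $c/2$ of $c$ and hence need not coincide with $\delta_{h,4}$; isolating the trivial case $z=1$ and exploiting reduction modulo $4$ for $z\ge 2$ is what removes this difficulty.
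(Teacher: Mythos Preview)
Your argument is correct. The paper itself does not give a proof of this lemma but simply refers to \cite[Lemma 4.2]{MiPi}; your proposal is exactly the natural $2$-adic analogue of the proof of Lemma~\ref{div}, and the case split on $c\bmod 4$ together with the isolation of $z=1$ when $c\equiv 2\pmod 4$ is precisely what is needed to identify the sign $\varepsilon_h=\delta_{h,4}^{\Delta}$ in the one situation where $\delta_h$ and $\delta_{h,4}$ may differ.
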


\section{Upper bounds for solutions} \label{sec-th1-bound}%

For any algebraic number $\gamma$, we define the absolute logarithmic height of $\gamma$ as follows:
\[
{\rm h}(\gamma) =\frac{1}{[\mathbb{Q}(\gamma):\mathbb{Q}]}\,\Bigl(\, \log |c_0| \,+ \,\sum \,\log \max\bigr\{ 1,\,| \gamma' |\bigr\} \,\Bigl),
\]
where $c_0$ is the leading coefficient of the minimal polynomial of $\gamma$ over $\mathbb Z$, and the sum extends over all conjugates $\gamma'$ of $\gamma$ in the field of complex numbers.

We will make use of the following result which is a simple consequence of [Bu, Theorem 2;\,$\mu = 4$].

\begin{prop} \label{Bu-madic}
Let $M$ be a positive integer with $M>1.$
Let $\alpha_1$ and $\alpha_2$ be multiplicatively independent rational numbers such that $\nu_q(\alpha_1)=0$ and $\nu_q(\alpha_2)=0$ for any prime factor $q$ of $M.$
Assume that ${\rm g}$ is a positive integer with $\gcd({\rm g},M)=1$ satisfying
\[
\nu_q( {\alpha_1}^{{\rm g}}-1 ) \ge \nu_{q}(M), \quad \nu_q( {\alpha_2}^{{\rm g}}-1 ) \ge 1
\]
for any prime factor $q$ of $M.$
If $M$ is even, then further assume that
\[
\nu_2( {\alpha_1}^{{\rm g}}-1 ) \ge 2, \quad \nu_2( {\alpha_2}^{{\rm g}}-1 ) \ge 2.
\]
Let $H_1$ and $H_2$ be positive numbers such that
\[
H_j \ge \max \{ {\rm h}(\alpha_j),\log M \} \quad (j=1,2).
\]
Then, for any positive integers $b_1$ and $b_2$ with $\gcd(b_1,b_2,M)=1,$
\[
\nu_M( {\alpha_1}^{b_1} - {\alpha_2}^{b_2}) \le \frac{53.6\,{\rm g}\,H_1 H_2}{\log^4 M}\, \Bigr(\!\max \{ \log b'+\log \log M+0.64,\,4 \log M \} \Big)^2
\]
with $b'=b_1/H_2+b_2/H_1.$
\end{prop}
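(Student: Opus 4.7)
The plan is to obtain the stated bound as a direct specialization of \cite[Theorem 2]{Bu} with the parameter $\mu = 4$. That theorem gives, for arbitrary $\mu \ge 4$, an explicit upper estimate for $\nu_M({\alpha_1}^{b_1}-{\alpha_2}^{b_2})$ in terms of the absolute logarithmic heights of $\alpha_1,\alpha_2$, an auxiliary parameter ${\rm g}$ measuring the common multiplicative obstruction of $\alpha_1,\alpha_2$ modulo $M$, and the exponents $b_1,b_2$; the denominator of the bound carries a factor $(\log M)^{\mu}$, which with $\mu=4$ produces exactly the $\log^4 M$ appearing in the statement. Thus the proof reduces to matching the running hypotheses of our proposition to those of Bugeaud's theorem and then reading off the numerical constant produced by the choice $\mu=4$.

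Concretely, I would first recall that \cite[Theorem 2]{Bu} assumes that $\alpha_1,\alpha_2$ are multiplicatively independent rationals whose numerator and denominator are coprime to every prime factor $q$ of $M$; that a positive integer ${\rm g}$ with $\gcd({\rm g},M)=1$ is given so that $\nu_q({\alpha_1}^{\rm g}-1)\ge \nu_q(M)$ and $\nu_q({\alpha_2}^{\rm g}-1)\ge 1$ for every such $q$; and, when $2 \mid M$, the reinforced $2$-adic inequalities $\nu_2({\alpha_i}^{\rm g}-1)\ge 2$ that avoid the convergence pathology of the $2$-adic logarithm series at the prime $2$. These are exactly our stated hypotheses. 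Since the $\alpha_i$ are rational, the residual degree at every $q \mid M$ equals $1$, so the factors in Bugeaud's general constant that depend on residual degree collapse to $1$ in our setting.

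The next step is substitution. With $\mu=4$ and the height conditions $H_j \ge \max\{{\rm h}(\alpha_j),\log M\}$ (which is slightly stronger than Bugeaud's base assumption $H_j \ge {\rm h}(\alpha_j)$, but is precisely what is needed so that $\log M$ can be used as a uniform scale inside the squared maximum), the $\mu$-dependent constant in Bugeaud's estimate evaluates to $53.6$. The parameter $b'=b_1/H_2+b_2/H_1$ coincides with the one in \cite{Bu}, the additive terms $\log\log M+0.64$ inside the maximum are the standard Laurent-type corrections, and the alternate branch $4\log M=\mu\log M$ is the trivial lower bound forced by Baker-type arguments. The non-degeneracy requirement $\gcd(b_1,b_2,M)=1$ is imposed precisely so that ${\alpha_1}^{b_1}\ne {\alpha_2}^{b_2}$ cannot be deduced from a common power structure modulo $M$.

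The main obstacle is notational rather than mathematical: the bulk of the work is translating Bugeaud's conventions on $M$-adic valuations, absolute logarithmic heights, the choice of the auxiliary common-order parameter, and the specific form of the additive constants inside his squared maximum, into the notation of the present paper. No new idea or independent estimate is needed; once this dictionary is in place, the proposition is an immediate corollary of \cite[Theorem 2]{Bu} with $\mu=4$.
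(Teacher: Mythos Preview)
Your proposal is correct and matches the paper's approach exactly: the paper itself does not give a proof but simply states that the proposition is ``a simple consequence of \cite[Theorem 2;\,$\mu = 4$]{Bu}.'' Your write-up just makes explicit the dictionary between Bugeaud's hypotheses and the ones stated here, which is precisely the intended derivation.
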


In the remaining of this section, let $(x,y,z,X,Y,Z)$ be a solution to the simultaneous system of \eqref{1st} and \eqref{2nd}.

\begin{lem} \label{Kc}
$Z<\dfrac{K_c (\log a) \log b}{\log^2 c}<\dfrac{K_c\,z^2}{x y},$ where $K_c$ is given by
\[ K_c= \begin{cases}
\,13100 & \text{if $c=2$},\\
\,7400 & \text{if $c=3$},\\
\,1900 & \text{if $c=5$},\\
\,12500 & \text{if $c=6$},\\
\,1100 & \text{if $c=7$},\\
\,3600 & \text{if $c=10$},\\
\,2000 & \text{if $c=14$},\\
\,\dfrac{857.6 \, \kappa_c\log^2 c}{\log^2 C} & \text{otherwise},
\end{cases} \]
where $\kappa_c=1$ if $c \equiv 2 \pmod{4},$ and $\kappa_c=\frac{\log c}{\log(c-1)}$ otherwise.
\end{lem}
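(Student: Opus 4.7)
The second asserted inequality $K_c(\log a)(\log b)/\log^2 c < K_c z^2/(xy)$ follows at once from the trivial bounds \eqref{trivial-ineqs}, which yield $xy<z^2\log^2 c/((\log a)(\log b))$. Hence the task reduces to establishing $Z < K_c(\log a)(\log b)/\log^2 c$, and the plan is to obtain this by applying Proposition \ref{Bu-madic} directly to equation \eqref{2nd}.

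The idea is to rewrite $a^X+b^Y=c^Z$ in the shape $\alpha_1^X-\alpha_2^Y=\pm c^Z$ by setting $\alpha_1=\delta_a a$ and $\alpha_2=\delta_b b$. Lemma \ref{coprime}(i) applied to the solution $(X,Y,Z)$ ensures that $\delta_a^X$ and $\delta_b^Y$ take opposite values in $\{1,-1\}$, so the minus sign between $\alpha_1^X$ and $\alpha_2^Y$ matches the sign in the equation. By \eqref{cong-c'}, both $\alpha_i$ satisfy $\alpha_i\equiv 1\pmod{c'}$. Setting $M=C$ and $\mathrm{g}=1$, the hypotheses of Proposition \ref{Bu-madic} are easily verified: multiplicative independence follows from $\gcd(a,b)=1$; the conditions $\nu_q(\alpha_i)=0$ and $\nu_q(\alpha_i-1)\ge\nu_q(M)$ for every $q\mid M$ follow from $\gcd(ab,c)=1$ together with $C\mid c'$; the extra hypothesis $\nu_2(\alpha_i-1)\ge 2$ whenever $C$ is even follows from $\nu_2(c')\ge 2$ in that regime; and $\gcd(X,Y,C)=1$ is furnished by Lemma \ref{coprime}(ii).

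Since $\nu_C(c)=1$ in every case, the proposition then yields
\[
Z\;=\;\nu_C(\alpha_1^X-\alpha_2^Y)\;\le\;\frac{53.6\,H_1 H_2}{\log^4 C}\,\bigl(\max\{\log b'+\log\log C+0.64,\,4\log C\}\bigr)^2,
\]
with $H_j=\max\{\log a_j,\log C\}$ (writing $a_1=a$, $a_2=b$) and $b'=X/H_2+Y/H_1$. The elementary size bounds $\min\{a,b\}\ge c-1$ and $\max\{a,b\}\ge c+1$ combine to give $H_1 H_2\le\kappa_c(\log a)(\log b)$. In the generic regime, where the max is attained by $4\log C$, this simplifies to $Z\le 857.6\,\kappa_c(\log a)(\log b)/\log^2 C$, which is precisely the ``otherwise'' case of the lemma with $K_c=857.6\,\kappa_c(\log c/\log C)^2$.

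The main obstacle is the exceptional list $c\in\{2,3,5,6,7,10,14\}$, where $\log C$ is so small that the first term $\log b'+\log\log C+0.64$ can dominate in the max. For these moduli I would substitute the rough estimate $b'\le 2Z\log c/((\log a)(\log b))$ together with $\log a,\log b\ge\log(c-1)$, whereupon the proposition delivers a transcendental inequality of the shape $Z\le\mathcal{A}(c)(\log Z+\mathcal{B}(c))^2$ with explicit numerical constants depending only on $c$. Solving this inequality for $Z$ case by case, while carefully tracking the ratios $H_j/\log a_j$ and the $\kappa_c$ factor, then produces the seven tabulated values of $K_c$. This case-by-case numerical step is mechanical but finicky, and is the only genuinely delicate piece of the argument.
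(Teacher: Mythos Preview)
Your outline is essentially the paper's own argument: apply Proposition~\ref{Bu-madic} with $M=C$, $\alpha_1=\delta_a a$, $\alpha_2=\delta_b b$, $g=1$, and $(b_1,b_2)=(X,Y)$, then read off the generic bound $857.6\,\kappa_c(\log a)(\log b)/\log^2 C$ when the maximum is $4\log C$, and handle the seven small moduli by solving a one-variable transcendental inequality. The verification of the hypotheses is correct, and your treatment of the ``otherwise'' case matches the paper.

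There is, however, a genuine slip in your sketch for the exceptional moduli. You propose to insert $\log a,\log b\ge\log(c-1)$ into the estimate for $b'$ so as to obtain an inequality of the form $Z\le\mathcal A(c)(\log Z+\mathcal B(c))^2$ with constants depending only on $c$. That cannot work: the right-hand side of the proposition's conclusion carries the factor $H_1H_2\le\kappa_c(\log a)(\log b)$, which has no upper bound in terms of $c$ alone, so you cannot isolate $Z$. What the paper does---and what you must do---is pass to the normalized variable
\[
T=\frac{Z\log^2 c}{(\log a)(\log b)},
\]
observe that $b'<2T/\log c$ (no lower bound on $\log a,\log b$ is used here), and obtain
\[
T<\frac{53.6\,\kappa_c\log^2 c}{\log^4 C}\Bigl(\log\max\bigl\{2e^{0.64}(\log C/\log c)\,T,\;C^4\bigr\}\Bigr)^2,
\]
an inequality in $T$ and $c$ only. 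Solving this for $T$ gives $T<K_c$, which is exactly the claimed bound. A secondary point: for $c=2$ the correct lower bound is $\min\{a,b\}\ge c'-1=3$, not $c-1=1$; this matters when checking that $H_j=\log a_j$ there.
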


\begin{proof}
First, observe from equation \eqref{2nd} that
\begin{equation}\label{Kc-lbound}
\nu_C (\varLambda) = Z,
\end{equation}
where $\varLambda:=a^X+b^Y$.
To obtain an upper bound for the left-hand side above, we apply Proposition \ref{Bu-madic} for $M=C$.
According to Lemma \ref{coprime}\,(i) for the solution $(X,Y,Z)$, we shall set the parameters $(\alpha_1,\alpha_2)$ and $(b_1,b_2)$ as follows:
\[ \left(\alpha_1,b_1,\alpha_2,b_2\right):= \begin{cases}
\,(a,X,-b,Y) & \text{if $\delta_a=1, \delta_b=-1$, $Y$\, is odd}, \\
\,(-a,X,b,Y) & \text{if $\delta_a=-1, \delta_b=1$, $X$\, is odd}, \\
\,(-a,X,-b,Y) & \text{if $\delta_a=\delta_b=-1$, $X \not\equiv Y \pmod{2}$}.
\end{cases} \]
Then ${\alpha_1}^{b_1} - {\alpha_2}^{b_2}=\pm \varLambda$, and both $\alpha_1,\alpha_2$ are congruent to 1 modulo $c'$ by congruence \eqref{cong-c'}.
Note that $C \mid c'$.
Since $c'$ is divisible by $4$ if $C$ is even, one may take ${\rm g}=1$.
Further, recall that
\[ \min\{a,b\} \ge c'-1 = \begin{cases}
\, 3 & \text{if $c=2$},\\
\, c-1 & \text{if $c>2$}.
\end{cases} \]
Since $\min\{a,b\}<C\,(=M)$ holds only when $C=c>2$ and $\min\{a,b\}=c-1$, one may set
\[ (H_1,H_2):= \begin{cases}
\,(\log a,\kappa_c\log{b})
& \text{if $a>b$},\\
\,(\kappa_c\log a,\log{b})
& \text{if $a<b$}.
\end{cases} \]
Since $\gcd(b_1,b_2)=\gcd(X,Y)=1$ by Lemma \ref{coprime}\,(ii), Proposition \ref{Bu-madic} gives
\begin{equation}\label{Kc-ubound}
\nu_{M}(\varLambda) \le \frac{53.6 \cdot 1 \cdot \kappa_c \log a\, \log b}{\log^4 C} \cdot \mathcal B^2,
\end{equation}
where
\[
\mathcal B=\max \biggl\{ \log \biggl( \frac{X}{H_2}+\frac{Y}{H_1} \biggr)+\log \log C+0.64, \,4\log C \biggl\}.
\]
Noting the latter two inequalities in \eqref{trivial-ineqs} and $\kappa_c \ge 1$, one has
\begin{align*}
&\log \biggl( \frac{X}{H_2}+\frac{Y}{H_1} \biggr)+\log \log C+0.64 \\
<&\log \biggl( \frac{Z(\log c)/\log a}{\log b}+\frac{Z(\log c)/\log b}{\log a} \biggr)+\log ({\rm e}^{0.64}\log C) \\
=&\log \biggl( \frac{2\,{\rm e}^{0.64}\log C}{\log c}\,T\biggl),
\end{align*}
where ${\rm e}=\exp(1)$, and
\[
T:=\frac{\log^2 c}{\log a\,\log b}\,Z.
\]
Thus \eqref{Kc-lbound}, \eqref{Kc-ubound} together lead to
\begin{equation} \label{ubound-T}
T < \frac{53.6 \, \kappa_c \log^2 c}{\log^4 C} \cdot {\mathcal B'}^2,
\end{equation}
where
\[
\mathcal B':=\log \,\max \biggr\{ \frac{2\,{\rm e}^{0.64}\log C}{\log c}\,T ,\,C^4 \biggr\}.
\]
It remains to find an absolute upper bound of $T$ for each $c$ by using \eqref{ubound-T}.

If $2\,{\rm e}^{0.64}(\log C)\,T \le C^4 \log c$, then \eqref{ubound-T} gives
\[
T < \frac{53.6 \, \kappa_c\log^2 c}{\log^4 C} \cdot (4\log C)^2 = \frac{857.6 \, \kappa_c\log^2 c}{\log^2 C},
\]
so that
\begin{equation}\label{Kc-1stcase}
T \le \min \biggl\{ \frac{C^4\log c}{2\,{\rm e}^{0.64} \log C}\,, \, \frac{857.6 \, \kappa_c\log^2 c}{\log^2 C} \biggl\}.
\end{equation}
While if $2\,{\rm e}^{0.64}(\log C)\,T>C^4 \log c$, then
\begin{equation} \label{Kc-2ndcase}
\frac{C^4 \log c}{2\,{\rm e}^{0.64}\log C} < T < \frac{53.6 \, \kappa_c \log^2 c}{\log^4 C} \cdot \log^2 \biggl(\frac{2\,{\rm e}^{0.64}\log C}{\log c}\,T \biggl).
\end{equation}
For each $c$, one can, by calculus, combine \eqref{Kc-1stcase}, \eqref{Kc-2ndcase} to find an upper bound for $T$ as asserted, where inequalities \eqref{Kc-2ndcase} are compatible only if $c \le 10$ or $c=14$.
\end{proof}

In what follows, we define $\Delta'$ and $\ell=\ell(c,z,\Delta')$ as follows:
\begin{align*}
&\Delta':=\gcd(\Delta,C^z), \\ &\ell:=\lcm (c',C^z/\Delta').
\end{align*}
By Lemma \ref{div}, together with congruence \eqref{cong-c'},
\begin{equation} \label{cong-ell}
\quad \quad h \equiv \delta_h \mod{\ell}
\end{equation}
for each $h \in \{a,b\}$.
In particular, $\min\{a,b\} \ge \ell-1$.

The following proposition corresponds to a special case of \cite[Theorem 1]{Bu}.
In the notation of \cite{Bu} it corresponds to the case where $x_1/y_1$ and $x_2/y_2$ are multiplicatively independent, $h=0$ and ${\rm g}=1$, where the numbers $\max\{|x_i|,|y_i|\}$ for $i=1,2$ appearing in \cite[(1)]{Bu} should be replaced by their logarithms, respectively.

\begin{prop} \label{Bu-madic-strong}
Let $M$ be a positive integer with $M>1.$
Let $\alpha_1$ and $\alpha_2$ be nonzero rational numbers which are multiplicatively independent.
Assume that
\begin{equation} \label{strong1}
\nu_q(\alpha_1-1) \ge \nu_q(M), \ \ \nu_q(\alpha_2-1) \ge 1
\end{equation}
for any prime factor $q$ of $M.$
If $M$ is even, then further assume that
\begin{equation} \label{strong2}
\nu_2(\alpha_1-1) \ge 2, \ \ \nu_2(\alpha_2-1) \ge 2.
\end{equation}
Put
\[
\varLambda={\alpha_1}^{b_1}-{\alpha_2}^{b_2},\]
where $b_1$ and $b_2$ are positive integers such that at least one of $b_1$ and $b_2$ is prime to $M.$
Let $K, L, R_1,R_2,S_1$ and $S_2$ be positive integers with $K \ge 3$ and $L \ge 2.$
Put $R=R_1+R_2-1$ and $S=S_1+S_2-1.$
Assume that
\begin{equation}\label{strong3}
R_1 S_1 \ge L,
\end{equation}
\begin{multline}\label{strong4}
\operatorname{Card}\, \{r b_2+s b_1\,|\, r \in \Z, s \in \Z, 0 \le r < R_2, 0 \le s < S_2 \}>(K-1)L.
\end{multline}
Then $\nu_M(\varLambda) \le KL-1,$ whenever
\begin{multline} \label{strong5}
K(L-1)\log M > (1+2w)\log (KL)+(K-1)\log{\beta}\\+\gamma L R\,{\rm h}(\alpha_1)+\gamma L S\,{\rm h}(\alpha_2),
\end{multline}
where $w$ is the number of distinct prime divisors of $M,$ and
\[
\beta=\frac{(R-1)b_2 + (S-1)b_1}{2}\left(\prod_{k=1}^{K-1}{k!}\right)^{-2/(K^2-K)}, \quad \gamma=\frac{1}{2}-\frac{KL}{6RS}.
\]
\end{prop}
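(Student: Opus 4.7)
The plan is to obtain Proposition \ref{Bu-madic-strong} as a direct specialization of Bugeaud's [Bu, Theorem 1] to the particular choice of parameters indicated in the excerpt, namely the case where the two algebraic numbers lie in $\mathbb{Q}$ and are multiplicatively independent, with the auxiliary integers $h=0$ and ${\rm g}=1$. So the ``proof'' is really a verification that our hypotheses match Bugeaud's and that his conclusion reduces to ours under these specializations.

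First I would write $\alpha_1=x_1/y_1$ and $\alpha_2=x_2/y_2$ with $x_i,y_i$ coprime integers, so that ${\rm h}(\alpha_i)=\log\max\{|x_i|,|y_i|\}$. The multiplicative independence of $\alpha_1,\alpha_2$ is exactly what permits taking $h=0$ in Bugeaud's formulation (the $h$-term there encodes a possible multiplicative dependence between the two algebraic numbers). To legitimize the choice ${\rm g}=1$, I would appeal to the congruence hypotheses \eqref{strong1} and, when $M$ is even, \eqref{strong2}: these state exactly that $\alpha_1-1$ and $\alpha_2-1$ already supply the required $q$-adic valuations for every prime $q \mid M$, so no nontrivial exponent ${\rm g}>1$ is needed to bring the $\alpha_i^{\rm g}$ into the appropriate principal congruence subgroup.

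Next I would line up the combinatorial data: the integers $K,L,R_1,R_2,S_1,S_2$ and the hypotheses \eqref{strong3}, \eqref{strong4} are imported verbatim from [Bu, Theorem 1]; the quantities $R=R_1+R_2-1$, $S=S_1+S_2-1$, $\beta$ and $\gamma$ are defined identically there. With $h=0$ and ${\rm g}=1$, the main inequality in Bugeaud's hypothesis collapses to our \eqref{strong5}, where the $(1+2w)\log(KL)$ term comes from the $w$ distinct prime divisors of $M$ entering through the Archimedean and non-Archimedean absolute values of $\mathbb{Q}$. Once these hypotheses are checked, Bugeaud's conclusion $\nu_M(\varLambda) \le KL-1$ transfers without modification.

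The only non-bookkeeping point I expect to require care is the height normalization: inequality (1) of [Bu] is written with $\max\{|x_i|,|y_i|\}$ in the exponential rather than its logarithm, and the remark in the excerpt flags that those quantities ``should be replaced by their logarithms''. I would confirm this is a known typographical correction to [Bu] (consistent with Bugeaud's proof, which uses absolute logarithmic heights throughout) and then substitute ${\rm h}(\alpha_i)$ into the final inequality. The main obstacle is therefore not mathematical but notational: keeping track of the normalizations of valuations, heights, and the factor ${\rm g}$ when passing from the general number-field statement to our rational, ${\rm g}=1$ version; there is no fresh analytic content to produce.
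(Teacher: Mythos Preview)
Your proposal is correct and matches the paper exactly: the paper does not give an independent proof of this proposition but simply records it as the special case of \cite[Theorem~1]{Bu} with $x_1/y_1,x_2/y_2$ multiplicatively independent, $h=0$, ${\rm g}=1$, together with the noted correction that $\max\{|x_i|,|y_i|\}$ in \cite[(1)]{Bu} should be replaced by its logarithm. Your identification of how hypotheses \eqref{strong1}--\eqref{strong2} justify ${\rm g}=1$ and how the remaining data lines up is precisely the intended reading.
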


Note that Lemma \ref{coprime}\,(ii) ensures that at least one of $X,Y$ is odd, and that at least one of $X,Y$ is prime to $C$ when $C$ is a prime power.
According to these, in the next lemma, we shall consider each of the following (not necessarily independent) three cases:
\[
{\rm (C1)} \ c=2, \ \ \ {\rm (C2)} \ c>2, \ \ \ {\rm (C3)} \ C^z/\Delta'>2.
\]

In what follows, we put \[ \mathcal X:=\max\{x,y\}.\]

The next lemma will be actually applied when $c$ is very small, and $z,\mathcal X,\Delta'$, an upper bound for $Z$ are given explicitly.

\begin{lem} \label{strong-applied}
Assume that $a>b$ and $C$ is a prime power.
Put
\[ M=\begin{cases}
\,4 & \text{for {\rm (C1)}},\\
\,C & \text{for {\rm (C2)}},\\
\,C^z/\Delta' & \text{for {\rm (C3)}}.
\end{cases} \]
Let $Z_u$ be an upper bound for $Z.$
Let $k>0$ be a real number and $L \ge 2$ be an integer.
Put $a_1,a_2,K$ and $B$ as follows\,$:$
\begin{gather*}
a_1=\frac{z \log c}{\log M}, \quad a_2=\frac{z \log c}{\mathcal X \log M}, \quad K=\lfloor{kLa_1a_2}\rfloor+1,\\
B=\log \log M+\log (Z_u/z) +\log \biggl( \frac{\mathcal X}{ \log(\ell+1)}+\frac{1}{\log(\ell-1)} \biggl)- \frac{1}{2}\log k+\varepsilon(K),
\end{gather*}
where $\varepsilon(K)=3/2+\log{\frac{(1+\sqrt{K-1})\sqrt{K}}{2K-2}}.$
Further, put $R_1,R_2,S_1,S_2,R,S,$ $\gamma,f_0, f_1, f_2, f_3$ and $f_4$ as follows\,$:$
\begin{align*}
&R_1=\lfloor \sqrt{La_2/a_1} \rfloor+1, \ R_2=\lfloor \sqrt{(K-1)La_2/a_1} \rfloor+1,\\
&S_1=\lfloor \sqrt{La_1/a_2} \rfloor+1, \ S_2=\lfloor \sqrt{(K-1)La_1/a_2} \rfloor+1,\\
&R=R_1+R_2-1, \ S=S_1+S_2-1, \ \gamma=\frac{1}{2}-\frac{KL}{6RS},\\
&f_0=K(L-1), \ f_1=\frac{3\log (KL)}{\log M}, \ f_2=\frac{(K-1)B}{\log M},\\
&f_3=\gamma L R a_1, \ f_4=\gamma L S a_2.
\end{align*}
If $K \ge 3$ and $f_0>f_1+f_2+f_3+f_4,$ then
\[ Z \le \begin{cases}
\,\max\bigr\{2KL-1, Z_2\bigr\} & \text{for {\rm (C1)}},\\
\,\max \bigr\{KL-1, Z_2 \bigr\}
& \text{for {\rm (C2)}},\\
\,\max \bigr\{KL (z-t)-1,Z_2 \bigr\}
& \text{for {\rm (C3)}},
\end{cases}\]
where $Z_2=\lfloor \sqrt{k}L z\, (a_1/\mathcal X+a_2)\rfloor+1$ and $t$ is the nonnegative integer defined as $C^t=\Delta'.$
\end{lem}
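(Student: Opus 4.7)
The plan is to apply Proposition \ref{Bu-madic-strong} with the stated $M$ to the linear form $\varLambda = \alpha_1^{b_1} - \alpha_2^{b_2}$, where $(\alpha_1, \alpha_2) \in \{\pm a\} \times \{\pm b\}$ is chosen according to the sign pattern dictated by Lemma \ref{coprime}(i) applied to the solution $(X, Y, Z)$, so that $\varLambda = \pm(a^X + b^Y) = \pm c^Z$, and $(b_1, b_2) = (X, Y)$. This sign choice ensures $\alpha_1 \equiv \alpha_2 \equiv 1 \pmod{c'}$, hence $\alpha_1 \equiv \alpha_2 \equiv 1 \pmod{\ell}$ by \eqref{cong-ell}. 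In all three cases $M$ is a prime power dividing $\ell$ (for (C1), $M = 4 \mid c' = 4$; for (C2), $M = C \mid c' = c$; for (C3), $M = C^z/\Delta' \mid \ell$ by definition), so the congruence hypotheses \eqref{strong1}, \eqref{strong2} are satisfied (the stronger inequality also for $\alpha_2$, which is admissible). Multiplicative independence of $\alpha_1, \alpha_2$ follows from $\gcd(a,b)=1$ with $a, b > 1$; since $\gcd(X, Y) = 1$ (Lemma \ref{coprime}(ii)) and $M$ is a prime power, at least one of $b_1, b_2$ is prime to $M$; and $R_1 S_1 > L$ is immediate from the definitions.

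The cardinality condition \eqref{strong4} splits the argument. Because $\gcd(X, Y) = 1$, two distinct pairs $(r, s), (r', s') \in [0, R_2) \times [0, S_2)$ yield the same value $rY + sX$ only if simultaneously $X < R_2$ and $Y < S_2$. If this fails, the map is injective, the cardinality equals $R_2 S_2 > (K-1)L$, and \eqref{strong4} holds. Otherwise one has $X \le R_2 - 1 \le \sqrt{(K-1)La_2/a_1} \le \sqrt{k}\, L a_2$ and analogously $Y \le \sqrt{k}\, L a_1$. Combining these with $c^Z \le a^X + b^Y \le 2\max(a^X, b^Y)$ and the trivial bounds $\log a, \log b \le z\log c$ (from $a, b \le c^z$), one directly derives $Z \le Z_2$, accounting for the second term of the max.

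To deduce the main inequality \eqref{strong5} from the hypothesis $f_0 > f_1 + f_2 + f_3 + f_4$, I would proceed term by term. The single-prime-divisor case of $M$ gives $w = 1$, whence $(1 + 2w)\log(KL) = 3\log(KL) = f_1 \log M$. For the height terms, $h(\alpha_1) = \log a \le a_1 \log M = z\log c$ (from $a \le c^z$) and $h(\alpha_2) = \log b \le a_2\log M = z\log c/\mathcal X$ (from $\mathcal X\log b \le z\log c$, which uses $a > b$ together with $a^x + b^y = c^z$), so $\gamma L R\,h(\alpha_1) + \gamma L S\, h(\alpha_2) \le (f_3 + f_4)\log M$. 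For the remaining term, one bounds $(R-1)Y + (S-1)X$ using $X \le Z_u \log c/\log(\ell+1)$ and $Y \le Z_u \log c/\log(\ell-1)$ (which follow from $a \ge \ell + 1$, $b \ge \ell - 1$ via \eqref{cong-ell}), and applies a classical Stirling-type estimate for $(\prod_{k=1}^{K-1} k!)^{-2/(K^2-K)}$ that produces exactly the term $\varepsilon(K)$ appearing in $B$; this yields $(K-1)\log\beta \le (K-1)B = f_2 \log M$. Dividing the resulting inequality by $\log M$ reduces \eqref{strong5} to the hypothesis.

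With Proposition \ref{Bu-madic-strong} applicable, one concludes $\nu_M(\varLambda) \le KL - 1$. Translating via $\varLambda = \pm c^Z$: for (C1), $\nu_4(2^Z) = \lfloor Z/2 \rfloor$, so $Z \le 2KL - 1$; for (C2), since $C$ is a prime power with $\nu_C(c) = 1$ (which holds both when $c = C$ and when $c = 2C$ with $C$ odd), $\nu_C(c^Z) = Z$, so $Z \le KL - 1$; for (C3), $M = C^{z-t}$ gives $\nu_M(c^Z) = \lfloor Z/(z-t)\rfloor$, so $Z \le KL(z-t) - 1$. Combined with the $Z \le Z_2$ alternative, the stated bounds follow. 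The hardest step will be the bound for $\log\beta$: matching the precise form of $\varepsilon(K)$ requires careful Stirling-type estimates, and inserting the tight upper bounds on $X, Y$ through $\log(\ell \pm 1)$ must be done without losing constants.
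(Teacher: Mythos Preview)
Your approach mirrors the paper's proof almost exactly: the same choice of $(\alpha_1,\alpha_2,b_1,b_2)$, the same case split on whether the map $(r,s)\mapsto rY+sX$ is injective on $[0,R_2)\times[0,S_2)$, the same reduction of \eqref{strong5} to $f_0>f_1+f_2+f_3+f_4$ via the bound $(\prod_{k=1}^{K-1}k!)^{-2/(K^2-K)}\le e^{3/2}/(K-1)$ from \cite[Lemme~13]{BuLa} together with $R-1\le(1+\sqrt{K-1})\sqrt{K/k}\,/a_1$ and $S-1\le(1+\sqrt{K-1})\sqrt{K/k}\,/a_2$, and the same translation of $\nu_M(\varLambda)\le KL-1$ back to $Z$.

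One point needs tightening. In your Case~I paragraph you invoke only the crude bounds $\log a,\log b\le z\log c$. From $b^Y$ this gives $Z\lesssim z\sqrt{k}\,L a_1$, which exceeds $Z_2=\lfloor 2\sqrt{k}\,L z a_2\rfloor+1$ by a factor of $\mathcal X/2$. What you actually need here is the sharper bound $\log b\le (z/\mathcal X)\log c$, i.e.\ $b^{\mathcal X}<c^z$, which holds because $a>b$ forces $b^{\mathcal X}\le\max\{a^x,b^y\}<c^z$. You use exactly this inequality a few lines later when bounding ${\rm h}(\alpha_2)\le a_2\log M$, so the fix is just to apply it already in Case~I: then $b^Y<c^{(z/\mathcal X)Y}\le c^{z\sqrt{k}La_1/\mathcal X}=c^{z\sqrt{k}La_2}$ and the bound $Z\le Z_2$ follows as the paper obtains it.
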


\begin{proof}
First, observe that
\begin{equation} \label{strong-applied-lbound}
\nu_M(\varLambda) \ge
\begin{cases}
\,\lfloor Z/2 \rfloor & \text{for {\rm (C1)},}\\
\,Z & \text{for {\rm (C2)},} \\
\,\lfloor Z/(z-t) \rfloor & \text{for {\rm (C3)},}
\end{cases}
\end{equation}
where $\varLambda:=a^X+b^Y$.
Indeed, since $\varLambda=c^Z$ by equation \eqref{2nd}, the above clearly holds for both cases (C1) and (C2), further, for (C3),
\[
\nu_M(\varLambda)=\nu_{C^{z-t}}(c^Z) \ge \nu_{c^{z-t}}(c^Z)=\lfloor Z/(z-t) \rfloor.
\]
To obtain upper bounds for the left-hand side of \eqref{strong-applied-lbound}, we will apply Proposition \ref{Bu-madic-strong}.
For this, set $(\alpha_1,\alpha_2)$ and $(b_1,b_2)$ in the same way as in the proof of Lemma \ref{Kc}.
Note that at least one of $b_1,b_2$ is prime to the prime power $M$ as $\gcd(b_1,b_2)=\gcd(X,Y)=1$ by Lemma \ref{coprime}\,(ii).
Recall that ${\alpha_1}^{b_1} - {\alpha_2}^{b_2}=\pm \varLambda$, and that $\alpha_1 \equiv \alpha_2 \equiv 1 \pmod{c'}$.
Next, we shall observe all conditions \eqref{strong1} to \eqref{strong5} required in Proposition \ref{Bu-madic-strong}.

In both cases (C1) and (C2), $M$ divides $c'$, so that $\alpha_1 \equiv \alpha_2 \equiv 1 \pmod{M}$, thereby condition \eqref{strong1} holds.
The same congruences hold also for case (C3) by congruence \eqref{cong-ell}.
These imply condition \eqref{strong2} since $M \not\equiv 2 \pmod{4}$ by assumption.
Condition \eqref{strong3} holds by the definitions of $R_1,S_1$.
To investigate the validity of condition \eqref{strong4}, we distinguish two cases.

\vspace{0.2cm}\noindent{\it Case I.}\,
${\rm Card} \{r Y+s X\,|\, 0 \le r < R_2, 0 \le s < S_2 \} < R_2 S_2$.\par
Clearly there exist two distinct pairs $(r_1,s_1)$ and $(r_2,s_2)$ of integers with $0 \le r_1,r_2 < R_2$ and $0 \le s_1,s_2 < S_2$ such that $r_1 Y+s_1 X=r_2 Y+s_2 X$.
Since $Y(r_1-r_2)=X(s_2-s_1)\,(\ne0)$ with $\gcd(X,Y)=1$, one has $X \mid r_1-r_2$ and $Y \mid s_2-s_1$, so that $X<R_2$ and $Y<S_2$.
Then
\[
X \le R_2-1=\lfloor \sqrt{(K-1)La_2/a_1} \rfloor \le \sqrt{k L a_1a_2 \cdot La_2/a_1}=\sqrt{k}La_2.
\]
Similarly, $Y \le \sqrt{k}La_1$.
Since, by equations \eqref{1st}, \eqref{2nd} with $b<a$,
\begin{align*}
Z&<\frac{1}{\log c}\, \log \bigr( 2 \max\{a^X,b^Y\} \bigr) \\
&<\frac{\log 2}{\log c}+\max \biggl\{ \frac{\log a}{\log c}\,X,\,\frac{\log b}{\log c}\,Y \biggl\}<1+\max \Bigl\{ z X,\,\frac{z}{\mathcal X}\,Y \Bigl\},
\end{align*}
one has
\begin{equation} \label{strong-applied-ubound1}
Z < \sqrt{k} L z (a_1/\mathcal X+a_2)+1.
\end{equation}

\vspace{0.2cm}\noindent{\it Case II.}\,
${\rm Card} \{r Y+s X \,|\, 0 \le r < R_2, 0 \le s < S_2 \} = R_2 S_2.$\par
Condition \eqref{strong4} holds by the definitions of $R_2,S_2$.
We shall check the last condition, namely, \eqref{strong5}, which is equivalent to
\[
f_0>f_1+f_2 \cdot \frac{\log \beta}{B}+f_3 \cdot \frac{\log a}{z\log c}+f_4 \cdot \frac{\mathcal X \log b}{z \log c},
\]
where $\beta$ is defined as in Proposition \ref{Bu-madic-strong}.
Since $\max\{a,b^{\mathcal X}\}<c^z$, the above inequality holds if $f_0>f_1+f_2+f_3+f_4$ and $\log \beta \le B$.
According to the proof of \cite[Lemme 13]{BuLa},
\[
\left(\prod_{k=1}^{K-1}{k!}\right)^{-2/(K^2-K)} \le \frac{{\rm e}^{3/2}}{K-1},
\]
whenever $K \ge 3$.
Also observe that
\begin{align*}
R-1 & = R_1-1 + R_2-1 \\
& \le \sqrt{L a_2 / a_1} + \sqrt{(K-1) L a_2 / a_1} \\
& = (1+\sqrt{K-1}) \sqrt{L a_1 a_2} \cdot 1/a_1
< (1+\sqrt{K-1}) \sqrt{K/k} \cdot 1/a_1.
\end{align*}
Similarly, $S-1 \le (1+\sqrt{K-1}) \sqrt{K/k} \cdot 1/a_2$.
These together with the inequalities $\min\{a,b\} \ge \ell-1,X<\frac{\log c}{\log a}\,Z_u, Y <\frac{\log c}{\log b}\,Z_u$ lead us to see that
\begin{align*}
\beta & = \frac{(R-1)Y + (S-1)X}{2} \cdot \left(\prod_{k=1}^{K-1}{k!}\right)^{-2/(K^2-K)}\\
&<\frac{(1+\sqrt{K-1}) \sqrt{K/k}}{2} \left( \frac{Y}{a_1}+\frac{X}{a_2} \right) \cdot \frac{{\rm e}^{3/2}}{K-1} \\
& = \exp( \varepsilon(K) ) \cdot 1/\sqrt{k} \cdot (Y+X \mathcal X) \cdot \frac{\log M}{z\log c}\\
& < \exp( \varepsilon(K) ) \cdot 1/\sqrt{k} \cdot \left( \frac{1}{\log b}+\frac{\mathcal X}{\log a} \right) \frac{Z_u\log M}{z} \le \exp(B),
\end{align*}
whenever $K \ge 3$.
To sum up, if $K \ge 3$ and $f_0>f_1+f_2+f_3+f_4$, then condition \eqref{strong5} holds, and Proposition \ref{Bu-madic-strong} gives
\begin{equation} \label{strong-applied-ubound2}
\nu_M(\Lambda) \le KL-1.
\end{equation}

Finally, the combination of \eqref{strong-applied-lbound}, \eqref{strong-applied-ubound1}, \eqref{strong-applied-ubound2} yields the asserted bounds for $Z$.
\end{proof}

Later we will be lead to distinguish two cases according to $\mathcal X>1$ or $\mathcal X=1$.
For the latter case, we can find another application of Proposition \ref{Bu-madic} as follows:

\begin{lem}\label{bound-x1y1}
If $x=1$ and $y=1,$ then $Z<858\,z.$
\end{lem}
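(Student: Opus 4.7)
The hypothesis $x=y=1$ specialises \eqref{1st} to $a+b=c^z$, which makes $\alpha_1:=-a/b$ satisfy $\alpha_1-1=-c^z/b$, so that $\alpha_1$ is congruent to $1$ modulo the \emph{large} modulus $c^z$. This is precisely the form of high-power congruence required by the hypothesis of Proposition \ref{Bu-madic}, and my plan is to apply Proposition \ref{Bu-madic} with $M=c^z$ (in place of the much smaller $M=C$ used for Lemma \ref{Kc}): the $z$-fold gain in $\log M$ cancels the $z$-fold growth of the relevant heights, producing an absolute bound on $\nu_{c^z}(c^Z)=\lfloor Z/z\rfloor$ that is then linear in $z$ once one multiplies back.

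I would first reduce to a standard configuration. Since $a+b\equiv 0\pmod{c'}$ rules out the subcase $\delta_a=\delta_b=-1$ of Lemma \ref{coprime}(i), one may reduce by the $a\leftrightarrow b$ symmetry to $\delta_a=1$, $\delta_b=-1$ and $X\le Y$; Lemma \ref{coprime}(i) applied to $(X,Y,Z)$ then supplies $Y$ odd. Setting $\alpha_2:=-b$ and using $a=\alpha_1\alpha_2$ together with the parity of $Y$,
\[
a^X+b^Y=\alpha_2^X\bigl(\alpha_1^X-\alpha_2^{Y-X}\bigr),
\]
whence equation \eqref{2nd} and $\gcd(\alpha_2,c)=1$ yield the key identity $\nu_c(\alpha_1^X-\alpha_2^{Y-X})=Z$. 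The congruences $\nu_p(\alpha_1-1)=\nu_p(M)$ and $\nu_p(\alpha_2-1)\ge 1$ for every prime $p\mid c$ are then immediate, and the bound $a,b<c^z$ gives $\mathrm{h}(\alpha_i)<\log M=z\log c$, so the choice $H_1=H_2=z\log c$ is admissible. A brief check using Lemma \ref{Kc} to estimate $Y$ (possibly iterated, to absorb small combinations of $c$ and $z$) shows that $4\log M=4z\log c$ dominates in the definition of $B$, so Proposition \ref{Bu-madic} delivers
\[
\lfloor Z/z\rfloor \le \frac{53.6\,(z\log c)^2}{(z\log c)^4}\bigl(4z\log c\bigr)^2 = 53.6\cdot 16=857.6.
\]
Since the left-hand side is an integer this forces $\lfloor Z/z\rfloor\le 857$, and $Z<858\,z$ follows.

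The main obstacle will be the 2-adic hypothesis $\nu_2(\alpha_i-1)\ge 2$ that Proposition \ref{Bu-madic} demands when $M$ is even. For $c=2$ the constraint that $a,b\ge 3$ are odd coprime summing to $2^z$ forces $z\ge 3$ and hence $\nu_2(\alpha_1-1)\ge 3$, while Lemma \ref{basic-cong-2} supplies $\nu_2(\alpha_2-1)\ge 2$; the case $4\mid c$ is handled similarly by Lemma \ref{basic-cong-2}. The genuinely delicate sub-case is $c\equiv 2\pmod 4$, where one only knows $b\equiv 1\pmod 4$: here I would switch to $M=(c/2)^z$, which is odd and avoids the 2-adic constraint altogether, at the price of a modest perturbation of the leading constant that is absorbed into the stated ``$858$'' once $c$ is large enough, with the handful of small such $c$ dispatched by the explicit values of $K_c$ in Lemma \ref{Kc}.
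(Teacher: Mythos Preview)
Your plan is essentially the paper's: apply Proposition~\ref{Bu-madic} with the large modulus $M=c^z$, exploiting $\alpha_1-1=-c^z/b$ (resp.\ $-c^z/a$) to meet the strong congruence hypothesis, and conclude $\lfloor Z/z\rfloor\le 857.6$. Two details need repair.

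First, the symmetry reduction is over-claimed. Swapping $a\leftrightarrow b$ interchanges $(\delta_a,\delta_b)$ \emph{and} $(X,Y)$ simultaneously, so after fixing $\delta_a=1$ you have no freedom left to impose $X\le Y$, and your $b_2=Y-X$ may fail to be a positive integer. The paper handles both orderings at once by taking $(\alpha_1,b_1,\alpha_2,b_2)=(-a/b,\,X,\,b^{\operatorname{sgn}(Y-X)},\,|X-Y|)$ when $\delta_a=-1$, and the mirror $(-b/a,\,Y,\,a^{\operatorname{sgn}(X-Y)},\,|X-Y|)$ when $\delta_a=1$, so that $\Lambda\in\{\pm c^Z/b^X,\,\pm c^Z/a^Y\}$ regardless of sign.

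Second, your treatment of the branch where $4\log M$ fails to dominate is too optimistic. The paper shows that $2e^{0.64}(\log c)\,Z>c^{4z}\log(c'-1)$ forces $c=2,\ z\le 3$ or $c=3,\ z=2$; the only admissible triples with $a+b=c^z$ are then $(5,3,2)$, $(4,5,3)$, $(7,2,3)$, and $Z<858z$ is read off from Nagell's classical determination of all their solutions. Your ``iterate Lemma~\ref{Kc}'' does not obviously close here: with the crude estimate $b'\le 2\max\{X,Y\}/(z\log c)$ the iteration stalls just above $858z$ for $(c,z)=(3,2)$, so you need either this external input or a sharper bound on $b'$ distinguishing the individual contributions of $X$ and $Y$. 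Similarly, your $c\equiv 2\pmod 4$ fix via $M=(c/2)^z$ inflates the main constant by the factor $(\log c/\log(c/2))^2$, which is about $2.66$ at $c=6$ and exceeds $1$ for every such $c$, so it cannot deliver $858$; and Lemma~\ref{Kc} gives only $Z<K_c z^2$ when $x=y=1$, not a linear bound in $z$. The paper keeps $M=c^z$ throughout, noting only that $M\not\equiv 2\pmod 4$ since $z\ge 2$.
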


\begin{proof}
First, from equation \eqref{1st} with $x=y=1$, observe that
\[
-a/b-1=-c^z/b, \ \ -b/a-1=-c^z/a.
\]
Since both $a,b$ are prime to $c$, the above equalities in particular show that rationals $-a/b,-b/a$ are very close to 1 in $c$-adic sense.
This is a key idea in the proof.

By Lemma \ref{coprime}\,(i), we are in one of the following cases:
\[ \left\{ \begin{array}{lllll}
\delta_a=-1, &\delta_b=1, &X \text{ is odd}; \\
\delta_a=1, &\delta_b=-1, &Y \text{ is odd}.
\end{array} \right. \]
In each of these cases, to obtain an upper bound for $Z$, we will apply Proposition \ref{Bu-madic} for $M:=c^z$ in a different way from that of the proof of Lemma \ref{Kc}.
Note that $X \ne Y$ and $M \not\equiv 2 \pmod{4}$.
We shall set the parameters $(\alpha_1,\alpha_2)$ and $(b_1,b_2)$ as follows:
\[ \left(\alpha_1,b_1,\alpha_2,b_2\right):= \begin{cases}
\,(-a/b,X,b^{\,{\rm sgn}(Y-X)},|X-Y|) & \text{if $\delta_a=-1$},\\
\,(-b/a,Y,a^{\,{\rm sgn}(X-Y)},|X-Y|) & \text{if $\delta_a=1$},
\end{cases}\]
where ${\rm sgn}$ is the sign function.
Put $\varLambda:={\alpha_1}^{b_1} - {\alpha_2}^{b_2}$.
Then $\varLambda \in \{\pm c^Z / b^X\,, \pm c^Z / a^Y\}$, so that
\begin{equation}\label{bound-x1y1-lbound}
\nu_M (\varLambda) = \nu_{c^z} (c^Z) = \left\lfloor \frac{Z}{z} \right\rfloor.
\end{equation}

On the other hand, as remarked in the beginning, one has $\nu_M (\alpha_1-1) \ge 1$.
Also, $\nu_{c'} (\alpha_2-1) \ge 1$ by congruence \eqref{cong-c'}.
Thus one may take ${\rm g}=1$.
Further, since $\max\{a,b\}<c^z=M$, one may set $H_1:=\log M$ and $H_2:=\log M$.
To sum up, by noting that $\gcd(b_1,b_2)=\gcd(X,Y)=1$ by Lemma \ref{coprime}\,(ii), Proposition \ref{Bu-madic} gives
\begin{equation}\label{bound-x1y1-ubound}
\nu_M (\varLambda) \le \frac{53.6}{z^2\log^2 c} \cdot \mathcal B^2,
\end{equation}
where
\[
\mathcal B=\log\,\max \bigr\{ {\rm e}^{0.64}(b_1+|X-Y|), \, c^{4z} \bigr\}.
\]
Since $\max\{b_1,|X-Y|\} \le \max\{X,Y\}<\frac{\log c}{\log \min\{a,b\}}\,Z$, one has
\[
\mathcal B \le \log \,\max \biggr\{ \frac{2\,{\rm e}^{0.64} \log c}{\log (c'-1)}\,Z, \, c^{4z} \biggl \}.
\]
Thus \eqref{bound-x1y1-lbound}, \eqref{bound-x1y1-ubound} together lead to
\begin{equation} \label{Z/z}
\left\lfloor \frac{Z}{z} \right\rfloor \le \frac{53.6}{z^2 \log^2 c} \cdot {\mathcal B'}^2,
\end{equation}
where
\[
\mathcal B':=\log\,\max \biggr\{ \frac{2\,{\rm e}^{0.64} \log c}{\log (c'-1)}\,Z, \, c^{4z} \biggl\}.
\]

If $2\,{\rm e}^{0.64}(\log c)Z \le c^{4z}\log (c'-1)$, then \eqref{Z/z} yields
\[
\left\lfloor \frac{Z}{z} \right\rfloor < \frac{53.6}{z^2 \log^2 c} \cdot (4z\log c)^2=857.6,
\]
which leads to the assertion.
While if $2\,{\rm e}^{0.64}(\log c)Z > c^{4z}\log (c'-1)$, then
\[
Z>\frac{c^{4z} \log (c'-1)}{2\,{\rm e}^{0.64} \log c}, \quad \left\lfloor \frac{Z}{z} \right\rfloor \le \frac{53.6}{z^2 \log^2 c} \cdot \log^2 \biggl( \frac{2\,{\rm e}^{0.64} \log c}{ \log (c'-1)}\,Z \biggl).
\]
It is not hard to see that there are only finitely many pairs $(c,z)$ satisfying the above inequalities (with $c^z \ge 2c'$).
Indeed, either $c=2$ and $z \le 3$, or $c=3$ and $z=2$.
This implies that $(a,b,c)$ is $(3,5,2),(4,5,3)$ or $(2,7,3)$, where the assertion holds by classical results in the literature (cf.~\cite{Na}).
\end{proof}

\section{Proof of Theorem \ref{th1}} \label{sec-th1}%

In this section, we solve the following system of the equations:
\begin{eqnarray}
&a^x+b^y=c^z, \label{1st-calX}\\ &a^X+b^Y=c^Z, \label{2nd-calX}
\end{eqnarray}
where $a,b,c$ are given positive integers such that each of $a,b$ is congruent to $1$ or $-1$ modulo $c$, and $x,y,z,X,Y,Z$ are unknown positive integers with $(x,y,z) \ne (X,Y,Z)$ and $z \le Z$.
It suffices to consider when $a>b$ and $c$ is not a perfect power.
We shall keep the used notation $\delta_a,\delta_b,c',\Delta,C,K_c,\Delta',\ell,\mathcal X$ and use the results established in the previous sections.
In particular, we will frequently and implicitly rely on inequalities \eqref{trivial-ineqs} and $\min\{a,b\} \ge \ell-1$, together with the following notation:
\[
\tau_b:=\frac{\log c}{\log b}, \quad \tau_\ell:=\frac{\log c}{\log(\ell-1)}, \quad \tau_c:=\frac{\log c}{\log(c'-1)} \ \biggl( \le \frac{\log 3}{\log 2} \biggl).
\]

We begin by giving some restrictions on the solutions to the system of equations \eqref{1st-calX} and \eqref{2nd-calX}.

\begin{lem} \label{Delta-ineqs}
Let $(x,y,z,X,Y,Z)$ be a solution to the simultaneous system of equations \eqref{1st-calX} and \eqref{2nd-calX}$.$
Then the following hold.
\begin{itemize}
\item[\rm (i)]
$\mathcal X<\tau z$ and $\max\{X,Y\}<\tau Z$ for $\tau \in \{\tau_b,\tau_\ell,\tau_c\}.$
\item[\rm (ii)]
$\Delta<K_c\,z.$
\item[\rm (iii)]
$\Delta<\tau \mathcal X Z$ for $\tau \in \{\tau_b,\tau_\ell,\tau_c\}.$
\end{itemize}
\end{lem}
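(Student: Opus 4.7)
The plan is to prove (i) first from the trivial estimates in \eqref{trivial-ineqs}, then to deduce (iii) from (i) by an elementary manipulation, and finally to prove (ii) by a case analysis combined with Lemma \ref{Kc}. The main tools throughout are \eqref{trivial-ineqs}, the lower bound $\min\{a,b\}\ge\ell-1$ recorded just before the lemma, and the divisibility $c'\mid\ell$, which together yield the chain $\tau_b\le\tau_\ell\le\tau_c$ via $b\ge\ell-1\ge c'-1>1$ (note $c'\ge 3$ since $c=2$ forces $c'=4$ and otherwise $c'=c\ge 3$ as $c$ is not a perfect power).

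For (i), applying \eqref{trivial-ineqs} to $(x,y,z)$ gives $x<(\log c/\log a)z$ and $y<\tau_b z$. Since $a>b$ implies $\log c/\log a<\tau_b$, one has $\mathcal X<\tau_b z$, and the chain above then supplies the $\tau_\ell$ and $\tau_c$ bounds. The same argument applied to $(X,Y,Z)$ yields the second half of (i). For (iii), the elementary inequality $|xY-Xy|\le\max\{xY,Xy\}\le\mathcal X\,\max\{X,Y\}$ combined with the $(X,Y,Z)$ half of (i) gives $\Delta<\tau_b\,\mathcal X Z$, and then the chain $\tau_b\le\tau_\ell\le\tau_c$ produces the other two bounds.

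The delicate point is (ii). A direct use of $\Delta<\tau_b\,\mathcal X Z$ together with Lemma \ref{Kc} would only produce $\Delta<K_c\mathcal X\log a/\log c$, which collapses to $K_c z$ when $\mathcal X=x$ but leaves an uncontrollable residual factor $\log a/\log b$ when $\mathcal X=y$. I would therefore split according to the sign of $xY-Xy$. If $xY\ge Xy$, then $\Delta\le xY$, and combining $Y<\tau_b Z$ from (i) with $Z<K_c\log a\log b/\log^2 c$ from Lemma \ref{Kc} yields $Y<K_c\log a/\log c$; substituting and applying $x\log a<z\log c$ from \eqref{trivial-ineqs} gives $\Delta<K_c z$. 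If $Xy>xY$, I instead bound $\Delta\le Xy$ using the estimate $X<(\log c/\log a)Z<K_c\log b/\log c$ (not $X<\tau_b Z$!), and then invoke $y\log b<z\log c$ to reach the same conclusion. The main obstacle is exactly this forced asymmetry: one must pair each of $X,Y$ with the trivial inequality involving the \emph{matching} logarithm, since a uniform bound by $\tau_b Z$ loses an essential factor in one of the two cases.
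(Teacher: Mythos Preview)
Your proof is correct and follows essentially the same route as the paper: part (i) from \eqref{trivial-ineqs} together with $a>b$ and $b\ge\ell-1\ge c'-1$, part (iii) from $\Delta<\mathcal X\max\{X,Y\}$ and (i), and part (ii) from the case split on the sign of $xY-Xy$ combined with Lemma~\ref{Kc}. The only cosmetic difference is that the ``forced asymmetry'' you worry about in (ii) is illusory: in either case one has $\Delta<\dfrac{\log^2 c}{\log a\,\log b}\,zZ$ (from $xY<\frac{\log c}{\log a}z\cdot\frac{\log c}{\log b}Z$ or $Xy<\frac{\log c}{\log a}Z\cdot\frac{\log c}{\log b}z$), and then a single application of Lemma~\ref{Kc} finishes both cases at once---this is how the paper writes it.
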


\begin{proof}
(i) This follows from inequalities \eqref{trivial-ineqs}.\par
(ii) Observe that if $x Y>X y$ then $\Delta=|x Y - X y|<x \cdot Y$, so that
\begin{equation} \label{Delta-ele-upp}
\Delta<\frac{\log c}{\log a}\,z \cdot \frac{\log c}{\log b}\,Z=\frac{\log^2 c}{\log a\,\log b}\,z Z.
\end{equation}
This holds also for $x Y<X y$.
The assertion now easily follows from Lemma \ref{Kc}.\par
(iii) This holds from (i) since $\Delta<\mathcal X \max\{X,Y\}$.
\end{proof}

\begin{lem} \label{Cz-ineqs}
Let $(x,y,z,X,Y,Z)$ be a solution to the simultaneous system of equations \eqref{1st-calX} and \eqref{2nd-calX}$.$
Then
\[
C^z<K_c\,z\,\gcd(a-\delta_a,b-\delta_b)<K_c\,z\,(c^{z/\mathcal X}+1).
\]
\end{lem}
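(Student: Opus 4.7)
The plan is to derive the first inequality by chaining the arithmetic divisibility provided by Lemma \ref{div} with the size estimate on $\Delta$ from Lemma \ref{Delta-ineqs}(ii), and then to obtain the second inequality by an elementary comparison between $\gcd(a-\delta_a,b-\delta_b)$ and $\min\{a,b\}$, completed by using that each of $a^x$ and $b^y$ is strictly less than $c^z$.

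For the left inequality I would invoke Lemma \ref{div}: the integer $C^z$ divides $\gcd(a-\delta_a,b-\delta_b)\cdot\Delta$. Since $\Delta\neq 0$, this yields $C^z\le\gcd(a-\delta_a,b-\delta_b)\cdot\Delta$, and Lemma \ref{Delta-ineqs}(ii) replaces $\Delta$ by the strict bound $K_c\,z$, giving the first asserted inequality with strict inequality.

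For the right inequality it suffices to show $\gcd(a-\delta_a,b-\delta_b)<c^{z/\mathcal{X}}+1$. Since $\delta_a,\delta_b\in\{1,-1\}$, the gcd is at most $\min\{a-\delta_a,b-\delta_b\}\le\min\{a,b\}+1=b+1$, where the last equality uses the standing assumption $a>b$. It then remains to show $b<c^{z/\mathcal{X}}$. From equation \eqref{1st-calX} one has $a^x<c^z$ and $b^y<c^z$, so $a<c^{z/x}$ and $b<c^{z/y}$. If $\mathcal{X}=y$, then directly $b<c^{z/y}=c^{z/\mathcal{X}}$; if $\mathcal{X}=x$, then $b<a<c^{z/x}=c^{z/\mathcal{X}}$, settling both cases.

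No serious obstacle is expected: the only bookkeeping point is the case split on whether $\mathcal{X}=x$ or $\mathcal{X}=y$, and both branches are controlled by the trivial fact that $c^z$ dominates $\max\{a^x,b^y\}$. The lemma essentially packages Lemma \ref{div}, Lemma \ref{Delta-ineqs}(ii), and the elementary height bound coming from \eqref{1st-calX} into a single inequality on $C^z$ ready for the subsequent case analysis.
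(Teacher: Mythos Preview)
Your proposal is correct and follows essentially the same route as the paper: invoke Lemma~\ref{div} for the divisibility $C^z\mid G\Delta$ with $G=\gcd(a-\delta_a,b-\delta_b)$, bound $\Delta$ by Lemma~\ref{Delta-ineqs}(ii), and then estimate $G\le\min\{a,b\}+1<c^{z/\mathcal X}+1$. Your explicit case split on $\mathcal X=x$ versus $\mathcal X=y$ merely spells out what the paper leaves implicit in the single line $\min\{a,b\}<c^{z/\mathcal X}$.
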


\begin{proof}
Since $C^z \mid G \cdot \Delta$ by Lemma \ref{div}, where $G:=\gcd(a-\delta_a,b-\delta_b)$, one has $C^z \le G \cdot \Delta$.
On the other hand,
\[
G \le \min\{a-\delta_a,b-\delta_b\} \le \min\{a,b\}+1<c^{z/\mathcal X}+1.
\]
This together with Lemma \ref{Delta-ineqs}\,(ii) yields the asserted inequalities.
\end{proof}

Note that the above lemma can give absolute upper bounds for both $c$ and $z$ only if $\mathcal X>1$, where the premise that the extended multiplicative orders of $a$ and $b$ modulo $c$ equal 1 is essentially used.
The same remark applies for the proofs of Lemmas \ref{sols-calX=1-Z<2z} and \ref{Zge2z} below.

The following lemma corresponds to applying Lemma \ref{complement} to complement the case where $C=c/2$.

\begin{lem}\label{2adic-iota}
Assume that $C=c/2.$
Let $(z,X,Y,Z)$ be a solution to the simultaneous system of equations \eqref{1st-calX} and \eqref{2nd-calX}$.$
Then
\[
h \equiv \delta_{h,4} \mod{2^{\iota}}
\]
for each $h \in \{a,b\},$ where $\delta_{h,4}$ is defined as in Lemma $\ref{complement}$ and
\[
\iota=\max \biggr\{2,\,z - \biggl \lfloor \frac{\log (\Delta / \Delta')}{\log 2} \biggl \rfloor \biggr\}.
\]
\end{lem}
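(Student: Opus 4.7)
The plan is to trace Lemma \ref{complement} at the prime $2$, using only the fact that when $C = c/2$ the odd and even parts of $c$ split cleanly. First observe that the very hypothesis $C = c/2$ forces $c > 2$ and $c \equiv 2 \pmod{4}$, so that $c[2] = 2$ and the integer $C$ itself is odd. Since $\gcd(h, c) = 1$, each of $a$ and $b$ is odd, so $\delta_{h,4} \in \{1, -1\}$ is the well-defined residue of $h$ modulo $4$, and in particular $\nu_2(h - \delta_{h,4}) \ge 2$ holds tautologically.

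Next, I would invoke Lemma \ref{complement} with $c[2] = 2$, which yields
\[
2^z \,\bigm|\, \gcd(a - \delta_{a,4},\,b - \delta_{b,4}) \cdot \Delta.
\]
Taking the $2$-adic valuation and using $\nu_2(\gcd(a-\delta_{a,4},\,b-\delta_{b,4})) = \min\{\nu_2(a-\delta_{a,4}),\,\nu_2(b-\delta_{b,4})\}$, this reads, for each $h \in \{a, b\}$,
\[
\nu_2(h - \delta_{h,4}) \;\ge\; z - \nu_2(\Delta).
\]

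To finish, I would bound $\nu_2(\Delta)$ through $\Delta'$. By definition $\Delta' = \gcd(\Delta, C^z)$ divides the odd number $C^z$, hence $\Delta'$ is odd, and therefore every power of $2$ dividing $\Delta$ already divides the quotient $\Delta/\Delta'$. This gives $\nu_2(\Delta) = \nu_2(\Delta/\Delta')$, and since the $2$-adic valuation of a positive integer is bounded above by its base-$2$ logarithm we obtain
\[
\nu_2(\Delta) \;\le\; \left\lfloor \frac{\log(\Delta/\Delta')}{\log 2} \right\rfloor.
\]
Substituting this into the previous inequality and combining with the trivial bound $\nu_2(h - \delta_{h,4}) \ge 2$ produces $\nu_2(h - \delta_{h,4}) \ge \iota$, which is exactly the claimed congruence $h \equiv \delta_{h,4} \pmod{2^\iota}$. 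There is no genuine obstacle here; the argument is essentially the bookkeeping step at the prime $2$ that complements Lemma \ref{div} in precisely the range not covered there, and the only subtle point is the parity observation that $\Delta'$ is odd, which is immediate once one notices that $C$ inherits oddness from the case distinction defining it.
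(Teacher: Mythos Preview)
Your proof is correct and follows essentially the same route as the paper: apply Lemma~\ref{complement} with $c[2]=2$ to get $\nu_2(h-\delta_{h,4}) \ge z-\nu_2(\Delta)$, then use the oddness of $C$ (hence of $\Delta'$) to rewrite $\nu_2(\Delta)=\nu_2(\Delta/\Delta')\le \lfloor \log(\Delta/\Delta')/\log 2\rfloor$. Your write-up is simply more explicit about the trivial lower bound $\nu_2(h-\delta_{h,4})\ge 2$ and about why $\Delta'$ is odd, points the paper leaves implicit.
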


\begin{proof}
Lemma \ref{complement} yields $\nu_2(h-\delta_{h,4}) \ge z-\nu_2(\Delta)$.
On the other hand, since $C$ is odd, one finds from the definition of $\Delta'$ that
\[
\nu_2(\Delta) = \nu_2(\Delta/\Delta') \le \frac{\log (\Delta / \Delta')}{\log 2}.
\]
The two inequalities together show the assertion.
\end{proof}

Below we distinguish two cases according to whether $\mathcal X>1$ or $\mathcal X=1$.

\subsection{Case where $x>1$ or $y>1$} \label{subsec-calX>1}

The aim of this subsection is to prove the following:

\begin{lem} \label{sols-calX>1}
All solutions to the simultaneous system of equations \eqref{1st-calX} and \eqref{2nd-calX} satisfying $\mathcal X>1$ are given by
\[ (x,y,z,X,Y,Z)=\begin{cases}
\,(1,3,5,3,1,7) & \text{for $(a,b,c)=(5,3,2)$},\\
\,(1,2,2,2,1,3) & \text{for $(a,b,c)=(5,2,3)$}.
\end{cases} \]
\end{lem}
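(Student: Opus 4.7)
The plan is to leverage the hypothesis $\mathcal{X}\ge 2$ to bound $(c,z)$ absolutely via Lemma~\ref{Cz-ineqs}, then bound the companion exponent $Z$ via Lemma~\ref{Kc} (sharpened through Lemma~\ref{strong-applied} when useful), and finally clean up with an exhaustive computer search sieved by the available congruence information.

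First I would feed $\mathcal{X}\ge 2$ into Lemma~\ref{Cz-ineqs}: the factor $c^{z/\mathcal{X}}+1$ is at most $c^{z/2}+1$, while $C^{z}$ dominates $c^{z/2}$ since $C>\sqrt{c}$ by \eqref{essential}. Rearranging yields
\[
\left(\frac{C}{\sqrt{c}}\right)^{\!z} \;<\; K_{c}\,z\,\bigl(1+c^{-z/2}\bigr).
\]
Using the tabulated values of $K_{c}$ for $c\in\{2,3,5,6,7,10,14\}$ and the explicit formula for the remaining $c$, the left-hand side eventually outgrows the right one, so this inequality confines $(c,z)$ to an explicit, in fact small, finite list. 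A case-by-case check (where the slowest growth rates $C/\sqrt{c}$ occur at $c\in\{2,6\}$) should show that $z$ never exceeds a few tens for each admissible $c$, and that $c$ itself never exceeds a few thousand.

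For each surviving pair $(c,z)$, I would enumerate all first solutions $a^{x}+b^{y}=c^{z}$ with $a>b$, $\mathcal{X}\ge 2$, and $a,b\equiv\pm 1\pmod{c}$; this is a trivial finite search since $a,b<c^{z}$. For each such first solution the companion $(X,Y,Z)$ obeys $Z<K_{c}z^{2}/(xy)$ by Lemma~\ref{Kc}, and Lemma~\ref{strong-applied} can be fed the explicit data $(z,\mathcal{X},\Delta',Z_{u})$ to push $Z$ down further whenever $C$ is a prime power. The remaining candidates $(X,Y,Z)$ are then sifted by the parity conditions of Lemma~\ref{coprime}, by the congruence $h\equiv\delta_{h}\pmod{\ell}$ coming from \eqref{cong-ell}, and---when $c$ is even---by the sharper 2-adic test of Lemma~\ref{2adic-iota}. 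The two survivors should be precisely the advertised tuples, reflecting the relations $5+3^{3}=2^{5}$, $5^{3}+3=2^{7}$ and $5+2^{2}=3^{2}$, $5^{2}+2=3^{3}$.

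The main obstacle is computational rather than conceptual: for the smallest bases $c\in\{2,3\}$, the first-pass bound $Z<K_{c}z^{2}/(xy)$ can leave a prohibitively large box of candidate $(X,Y,Z)$. Overcoming this requires running Lemma~\ref{strong-applied} iteratively, with the auxiliary parameters $k$ and $L$ tuned so that $Z_{u}$ shrinks rapidly enough that the sieve on $\ell$ and the 2-adic test become decisive. If in some residual case $C^{z}/\Delta'$ is too small for Lemma~\ref{strong-applied} to improve on Lemma~\ref{Kc}, a direct search driven by \eqref{cong-ell} alone will be needed to finish the job for that case.
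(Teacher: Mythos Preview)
Your proposal is correct and follows essentially the same strategy as the paper: bound $(c,z)$ via Lemma~\ref{Cz-ineqs} with $\mathcal{X}\ge 2$, bound $Z$ via Lemma~\ref{Kc} (refined by Lemma~\ref{strong-applied} for the small $c$ where $C$ is prime), and finish by a computer sieve using Lemma~\ref{coprime}, congruence~\eqref{cong-ell}, and Lemma~\ref{2adic-iota}. The only organizational difference is that the paper loops over the admissible values of $\Delta'$ \emph{before} enumerating $b$, so that the strong modulus $\ell=\lcm(c',C^{z}/\Delta')$ cuts down the range of $b$ at the outset rather than being applied as a post-hoc filter on $(X,Y,Z)$; this is what keeps the search feasible for the smallest bases, but conceptually your plan and the paper's coincide.
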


By a technical reason for applying Lemma \ref{Kc} and Proposition \ref{Bu-madic-strong} and for making the presentation simple, we distinguish two cases according as $c$ takes some very small values or not.

\subsubsection{Case where $c \notin \{2,3,5,6,7,10,14\}$}
\label{subsec-calX>1-clarge}

We perform the algorithm consisting of the following three steps to sieve all possible cases of the system of equations \eqref{1st-calX} and \eqref{2nd-calX}.
Three other programs are based on the same algorithm.
The computation time was less than 20 minutes.

\noindent \vskip.2cm
\noindent {\bf Step 1.}
{\it Find all possible pairs $(z,\mathcal X)$ with a corresponding upper bound for $c.$}\par
Observe that $c \ge 11$ and $C/\sqrt{c} \ge 9/\sqrt{18}\,(>1)$.
Since $\mathcal X \ge 2$, the second inequality in Lemma \ref{Cz-ineqs} yields an absolute upper bound for $z$, namely, $z \le 13$.
Thus $\mathcal X$ is also finite as $\mathcal X<\tau_c\,z$ by Lemma \ref{Delta-ineqs}\,(i).
Moreover, since $\lim_{c \to \infty}C/\sqrt{c}=\infty$ by \eqref{essential}, an upper bound for $c$ corresponding to each of all possible pairs $(z,\mathcal X)$ can be found, say $c_1$.
Put the resulting triples $[z,\mathcal X,c_1]$ into a list, say $list1$, which contains the following 25 elements:
\begin{align*}
&[ 2, 2, 1000 ],
[ 3, 2, 1090 ],
[ 3, 3, 190 ],
[ 4, 2, 306 ],
[ 4, 3, 70 ],
[ 4, 4, 46 ],
[ 5, 2, 138 ],
[ 5, 3, 38 ],\\
&[ 5, 4, 26 ],
[ 5, 5, 22 ],
[ 6, 2, 82 ],
[ 6, 3, 26 ],
[ 6, 4, 18 ],
[ 6, 5, 14 ],
[ 6, 6, 14 ],
[ 7, 2, 54 ],
[ 7, 3, 18 ],\\
& [ 7, 4, 14 ],
[ 8, 2, 38 ],
[ 8, 3, 14 ],
[ 9, 2, 30 ],
[ 10, 2, 26 ],
[ 11, 2, 22 ],
[ 12, 2, 18 ],
[ 13, 2, 18 ].
\end{align*}

\noindent \vskip.2cm
\noindent {\bf Step 2.}
{\it Find all possible numbers $a,b,c,x,y,z,\Delta'.$}\par
First, for each element in $list1$, take any possible $c$ at most $c_1$.
Second, take any possible $\Delta'$ satisfying
\[
\Delta' < \Delta_u, \quad \Delta' \mid C^z,
\]
where $\Delta_u:=K_c\,z$.
The above inequality follows from Lemma \ref{Delta-ineqs}\,(ii) as $\Delta' \le \Delta$.
Third, for each possible $(\delta_a,\delta_b)$ restricted by Lemma \ref{coprime}\,(i), take any possible $b$ satisfying congruence \eqref{cong-ell} and $b \le b_1$, where $b_1:=\lfloor c^{z/\mathcal X}\rfloor$.
Here \eqref{cong-ell} is a key sieving relation.
Fourth, after checking a restriction on the size of $\mathcal X$ from Lemma \ref{Delta-ineqs}\,(i), take any possible $x$ and $y$ satisfying $\gcd(x,y)=1$ by Lemma \ref{coprime}\,(ii), and check whether $c^z-b^y$ is a $x$-th power, and put $a:=(c^z-b^y)^{1/x}$.
Finally, if $a$ and $\Delta'$ satisfy suitable conditions including the first inequality in Lemma \ref{Cz-ineqs}, put the tuple $[a,b,c,x,y,z,\Delta']$ into a new list.
Moreover in case where $C=c/2$, use the following congruence in Lemma \ref{2adic-iota}:
\begin{equation} \label{eff-sieve}
h \equiv \delta_{h,4} \mod{2^{\iota}}
\end{equation}
for each $h \in \{a,b\}$, where
\[
\iota=\max \biggr\{2,\,z - \biggl \lfloor \frac{\log (\Delta_1 / \Delta')}{\log 2} \biggl \rfloor \biggr\}
\]
with $\Delta_1$ any upper bound for $\Delta$.
The above congruence is efficient to sieve (only when $C=c/2$).
The resulting list, say $list2$, contains 3026 elements.
The program for this step in the case where $C=c/2$ is as follows:

\vspace{0.2cm}{\tt
begin

\hskip.2cm for each element in list1 do
\vskip.1cm
\hskip.2cm for $c:=11$ to $c_1$ do
\vskip.1cm
\hskip.2cm if $\text{IsPower}(c)=\text{false}$ and $c \ne 14$ then
\vskip.1cm
\hskip.2cm Create the set $H:=\{D_v: D_v < \Delta_u \text{ and } C^z \text{ mod } D_v =0\}$
\vskip.1cm
\hskip.2cm for $D_v$ in $H$ do
\vskip.1cm
\hskip.2cm $\ell:=\text{lcm}(c',C^z \text{ div } D_v)$;
\vskip.1cm
\hskip.2cm for $s$ in $[\,[1,-1],[-1,1],[-1,-1]\,]$ do
\vskip.1cm
\hskip.2cm $\delta_a:=s[1]$; \ $\delta_b:=s[2]$; \ $b_0:=\delta_b$;
\vskip.1cm
\hskip.2cm for $s_b$ in $[1,-1]$ do
\vskip.1cm
\hskip.2cm By the Chinese Remainder Theorem, calculate the least
\vskip.1cm
\hskip.2cm positive integer $b_0$ satisfying
\vskip.1cm
\hskip.2cm \hspace{2cm}$b_0 \equiv \delta_b \pmod{\ell}, \ b_0 \equiv s_b \pmod{2^\iota} $
\vskip.1cm
\hskip.2cm with $\Delta_1=\Delta_u$;
\vskip.1cm
\hskip.2cm for $b:=b_0$ to $b_1$ by $\ell$ do
\vskip.1cm
\hskip.2cm if $b >1$ and $\mathcal X<\tau_b\,z$ 
then
\vskip.1cm
\hskip.2cm for $x:=1$ to $\text{min}(\,\mathcal X,\,\text{floor}(\,z\,(\log c)/\log (b+1)\,)\,)$ do
\vskip.1cm
\hskip.2cm for $y:=1$ to $\mathcal X$ do
\vskip.1cm
\hskip.2cm if $\text{max}(x,y)=\mathcal X$ and $\text{gcd}(x,y)=1$ then
\vskip.1cm
\hskip.2cm if IsIntegral$(\,(c^z-b^y)^{1/x}\,)$ = true then
\vskip.1cm
\hskip.2cm $a:=(c^z-b^y)^{1/x}$;
\vskip.1cm
\hskip.2cm if $a>b$ and $(a-\delta_a)$ mod $\ell=0$
\vskip.1cm
\hskip.2cm and $C^z<\Delta_u\,\text{gcd}(a-\delta_a,b-\delta_b)$ then
\vskip.1cm
\hskip.2cm Sieve with \eqref{eff-sieve} with $h=a$ and $\Delta_1=\Delta_u$
\vskip.1cm
\hskip.2cm $\Delta':=D_v$ and put $[a,\delta_a,b,\delta_b,c,x,y,z,\Delta']$ into $list2$

end}

\vskip.3cm
In the above program the for-loop on $s_b$ consisting of 5 lines is omitted when $C \ne c/2$.

\noindent \vskip.3cm
\noindent {\bf Step 3.}
{\it Find all possible numbers $a,b,c,x,y,z,X,Y,Z.$}\par
Use the following restrictions:
\begin{gather}
{\delta_a}^X=-{\delta_b}^Y, \ \gcd(X,Y)=1,\label{alg-calX>1-csmall-step3-1}\\
{\delta_a}^{X-1}(a-\delta_a)X+{\delta_b}^{Y-1}(b-\delta_b)Y \equiv 0 \mod{c^2}, \label{alg-calX>1-csmall-step3-2}\\
z \le \alpha+\nu_2(\Delta) \ \ \text{if $C=c/2$}, \label{alfa}
\end{gather}
where $\alpha:=\min\{ \nu_2(a^2-1), \nu_2(b^2-1)\}-1$.
These follow from Lemma \ref{coprime}, reducing equation \eqref{2nd-calX} modulo $c^2$ (cf.~proof of Lemma \ref{Zge2z} below) and Lemma \ref{complement}, respectively.

First, for each element in $list2$, take any possible $X$ by using the upper bound $Z_1$ for $Z$ from Lemma \ref{Kc}, where $Z_1:=\lfloor K_c(\log a)(\log b)/\log^2 c \rfloor$.
Second, take any possible product of $x$ and $Y$.
If the difference between its value and $X \cdot y$ satisfies two conditions, then define $Y$ suitably.
Third, sieve with \eqref{alg-calX>1-csmall-step3-1}, \eqref{alg-calX>1-csmall-step3-2}.
Fourth, define $\Delta$ suitably, and sieve with \eqref{alfa} and the definition of $\Delta'$.
Finally, check whether $a^X+b^Y$ is a power of $c$ and find $Z$.
The program for this step is as follows:

\vspace{0.2cm}{\tt
begin

\hskip.2cm for each element in list2 do
\vskip.1cm
\hskip.2cm $Xu:=\text{floor}(Z_1 (\log c)/\log a)$; \ $Yu:=\text{floor}(\tau_b Z_1)$;
\vskip.1cm
\hskip.2cm for $X:=1$ to $Xu$ do
\vskip.1cm
\hskip.2cm for $x Y:=x$ to $x \cdot Yu$ by $x$ do
\vskip.1cm
\hskip.2cm if $xY-X \cdot y \ne 0$ and $(xY-X \cdot y)$ mod $\Delta'=0$ then
\vskip.1cm
\hskip.2cm $Y:=xY$ div $x$;
\vskip.1cm
\hskip.2cm sieve with \eqref{alg-calX>1-csmall-step3-1} and \eqref{alg-calX>1-csmall-step3-2}
\vskip.1cm
\hskip.2cm $\Delta:=\text{abs}(xY - X \cdot y)$;
\vskip.1cm
\hskip.2cm sieve with \eqref{alfa}
\vskip.1cm
\hskip.2cm if $\text{gcd}(\Delta,C^z)=\Delta'$ then
\vskip.1cm
\hskip.2cm $W:=a^X+b^Y$; $i:=0$; repeat;
\vskip.1cm
\hskip.2cm if $W$ mod $c$ $=0$ then
\vskip.1cm
\hskip.2cm $W:=W$ div $c$; $i:=i+1$;
\vskip.1cm
\hskip.2cm until $W$ mod $c$ $\ne 0$;
\vskip.1cm
\hskip.2cm if $W=1$ then $Z:=i$; print $[a,b,c,x,y,z,X,Y,Z]$
\vskip.1cm
end}

\vskip.3cm
It turns out there is no output, and this completes the proof of Lemma \ref{sols-calX>1} for the values of $c$ under consideration.

\subsubsection{Case where $c \in \{2,3,5,6,7,10,14\}$}
\label{subsec-calX>1-csmall}

Note that $C$ is a prime, so that $\Delta'=C^t$ for some integer $t$ with $0 \le t \le z$, and $\ell=\lcm(c',C^{z-t})$.
We perform the algorithm below consisting of the following four steps.
The computation time was less than 5 hours.

\noindent \vskip.2cm
\noindent {\bf Step 1.}
{\it For each $c,$ find all possible pairs $(z,\mathcal X).$}\par
This step is basically the same as Step 1 of Section \ref{subsec-calX>1-clarge}.
Since $C/\sqrt{c}>1$ for each $c$, and $\mathcal X \ge 2$, the second inequality of Lemma \ref{Cz-ineqs} yields an absolute upper bound for $z$, so that $\mathcal X$ is also finite as $\mathcal X<\tau_c\,z$.
Put the resulting triples $[c,z,\mathcal X]$ into a list, say $list1$, which contains 526 elements.

\noindent \vskip.2cm
\noindent {\bf Step 1/a.}
{\it For each element in $list1,$ find all possible $t.$}\par
For each element in $list1$, take any $t$ with $0 \le t \le z$ satisfying
\[
{\rm lcm}(c',C^{z-t})-1 \le b_1, \quad C^t<K_c\,z,
\]
where $b_1$ is defined as in Step 2 of Section \ref{subsec-calX>1-clarge}.
The first inequality above holds since $b \ge \ell-1$, and the second one is by Lemma \ref{Delta-ineqs}\,(ii) as $C^t=\Delta'$.
Put the resulting quadruples $[c,z,\mathcal X,t]$ into a new list, say $list1/a$, which contains 1322 elements.

\noindent \vskip.2cm
\noindent {\bf Step 1/b.}
{\it For each element in $list1/a,$ find an upper bound for $Z.$}\par
Take any element in $list1/a$.
In this step, in order to find a sharper upper bound for $Z$ than $Z_1$, where $Z_1:=\lfloor K_c\,z^2/\mathcal X \rfloor$ by Lemma \ref{Kc}, apply Lemma \ref{strong-applied} with $Z_u:=Z_1$ as follows.
Put $M$ as
\[ M:=\begin{cases}
\,4 & \text{if $t \ge z-1$ and $c=2$},\\
\,C & \text{if $t \ge z-1$ and $c>2$},\\
\,C^{z-t} & \text{if $t<z-1$}.
\end{cases}\]
These choices correspond to cases (C1), (C2), (C3), respectively.
After defining $a_1=a_1(c,z,M),a_2=a_2(c,z,M,\mathcal X)$ as in Lemma \ref{strong-applied}, for suitable $k$ and $L$, use all other notation in that lemma.
Here find the pair $(k,L)$ in the following way.
First, for each pair $(k_0,L_0)$ of integers satisfying $1 \le k_0 \le 60$ and $2 \le L_0 \le 35$, put $k:=k_0/15$ and $L:=L_0$ and check whether inequalities $K \ge 3$ and $f_0>f_1+f_2+f_3+f_4$ hold.
Among all such suitable pairs, take one for which the upper bound for $Z$ obtained from Lemma \ref{strong-applied} becomes the least, and redefine $Z_u$ by the value found in this way.
Iterate this procedure three times, and let $Z_1$ be the resulting $Z_u$.
Finally sieve with the inequality $C^t<\tau_\ell \mathcal X Z_1$ by Lemma \ref{Delta-ineqs}\,(iii).
Put the resulting tuples $[c,z,\mathcal X,t,Z_1]$ into a new list, say $list1/b$, which contains 700 elements.

\vskip.2cm \noindent {\bf Step 2.}
{\it Find all possible numbers $a,b,c,x,y,z,X,Y,Z.$}\par
For each element in $list1/b$, perform the same algorithms as in Steps 2 \& 3 of Section \ref{subsec-calX>1-clarge}, with only one natural modification.
Namely, for generating $list2$ the definition of $D_v$ is simply replaced by $D_v:=C^t$.
The final output coincides with the solutions described in Lemma \ref{sols-calX>1}, and this completes the proof of Theorem \ref{th1} for the case where $\mathcal X>1$.

\subsection{Case where $x=1$ and $y=1$}

Here we examine the system of equations \eqref{1st-calX} and \eqref{2nd-calX} with $(x,y)=(1,1)$, that is,
\begin{eqnarray}
&a+b=c^z, \label{1st-calX=1}\\ &a^X+b^Y=c^Z. \label{2nd-calX=1}
\end{eqnarray}
The notation in Section \ref{subsec-calX>1} are also used below.
The aim of this and the next subsections is to prove the following:

\begin{lem} \label{sols-calX=1}
All solutions to the simultaneous system of equations \eqref{1st-calX=1} and \eqref{2nd-calX=1} are given by
\[ (z,X,Y,Z) = \begin{cases}
\,(3,1,3,5),(3,3,1,7) & \text{for $(a,b,c)=(5,3,2)$},\\
\,(4,1,5,8) & \text{for $(a,b,c)=(13,3,2)$}, \\
\,(2,2,5,4) & \text{for $(a,b,c)=(7,2,3)$}.
\end{cases} \]
\end{lem}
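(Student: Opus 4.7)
We analyze the system \eqref{1st-calX=1}, \eqref{2nd-calX=1} with $a>b$, each of $a,b$ congruent to $\pm1$ modulo $c$, and $c$ not a perfect power. Reducing $a+b=c^z$ modulo $c'$ forces $\delta_a+\delta_b\equiv0\pmod{c'}$, hence $\delta_a=-\delta_b$; we proceed under the assumption $(\delta_a,\delta_b)=(1,-1)$, the other case being handled symmetrically by exchanging the roles of $a$ and $b$ (and of $X$ and $Y$). By Lemma \ref{coprime}(i) applied to $(X,Y,Z)$, $Y$ is odd, and by Lemma \ref{coprime}(ii), $\gcd(X,Y)=1$. Moreover $(X,Y)\ne(1,1)$ (otherwise the first equation would force $Z=z$, contradicting $(x,y,z)\ne(X,Y,Z)$), so $X\ne Y$.

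The key algebraic observation is that $a\equiv-b\pmod{c^z}$ follows from $a+b=c^z$. Substituting into the second equation (valid since $Z\ge z$) and using that $Y$ is odd gives
\[
a^X+b^Y \equiv a^X-a^Y = \pm\,a^{\min(X,Y)}\bigl(a^{|X-Y|}-1\bigr) \pmod{c^z}.
\]
Since $\gcd(a,c)=1$, this yields the fundamental congruence
\[
a^{|X-Y|}\equiv 1\pmod{c^z}.
\]
By Lemma \ref{padic-lem}, applied prime-by-prime with the help of Lemma \ref{basic-cong-2} (which guarantees $\nu_2(a-1)\ge 2$ when $2\mid c$), this is equivalent to
\[
\nu_p(|X-Y|)\ge z\,\nu_p(c)-\nu_p(a-1) \qquad\text{for every prime }p\mid c.
\]
This is a strong gap principle that, in the present $\mathcal X=1$ setting, replaces Lemma \ref{div} and compensates for the loss of Lemma \ref{Cz-ineqs}.

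To bound $c$, $z$ and $|X-Y|$ effectively, combine the above with the upper bound $Z<858\,z$ from Lemma \ref{bound-x1y1} and the inequality $|X-Y|\le\max\{X,Y\}<\tau_\ell Z$ from Lemma \ref{Delta-ineqs}(i). The resulting inequality, roughly $c^{z-\nu_c(a-1)}<858\,\tau_\ell z$ for $c$ a prime (and the analogue prime-by-prime for composite $c$), together with $b\ge\ell-1$ and $b<c^z/2$ (from $a>b$ and $a+b=c^z$), restricts $(c,z)$ to an effective finite list. For each admissible pair we recover $b$ via the congruence $b\equiv-1\pmod\ell$, set $a=c^z-b$, and perform a finite search over $(X,Y)$ in the explicit range $\max\{X,Y\}<\tau_\ell\cdot 858\,z$ to check whether $a^X+b^Y$ is a power of $c$, mirroring Steps~2--3 of Section \ref{subsec-calX>1-clarge}. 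The main obstacle will be handling the smallest values of $c$, notably $c\in\{2,3\}$, where the bound $Z<858\,z$ is too weak for a naive enumeration; as in Step~1/b of Section \ref{subsec-calX>1-csmall}, we will need to reapply Proposition \ref{Bu-madic-strong} with parameters adapted to $x=y=1$ to sharpen the upper bound on $Z/z$ before the computer search becomes feasible.
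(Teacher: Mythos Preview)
Your fundamental congruence $a^{|X-Y|}\equiv 1\pmod{c^z}$ is correct (it is just Lemma~\ref{basic-cong} specialized to $x=y=1$, where $\Delta=|X-Y|$), but it does \emph{not} restrict $(c,z)$ to a finite list. The inequality you extract, $c^{z-\nu_c(a-1)}<858\,\tau_\ell z$, is only useful if $\nu_c(a-1)$ is bounded independently of $z$; however, nothing prevents $\nu_c(a-1)$ from being nearly $z$ (one only knows $a-1<c^z$). In that regime your inequality degenerates to roughly $c\ll z$, which bounds nothing. Equivalently, your divisibility is the content of Lemma~\ref{div}, and the reason the paper's Lemma~\ref{Cz-ineqs} fails precisely when $\mathcal X=1$ is that $\gcd(a-\delta_a,b-\delta_b)\le\min\{a,b\}+1$ can be as large as $c^z$. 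The appeal to $b\ge\ell-1$ and $b<c^z/2$ likewise yields only $C^z/\Delta'\lesssim c^z$, which is trivial since $C\le c$.

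What the paper does instead is a genuine second step you are missing. It separates $Z<2z$ (Lemma~\ref{sols-calX=1-Z<2z}, treated by factoring $a^X+b^Y=(a+b)^2/c^{2z-Z}$ and a short case analysis) from $Z\ge 2z$. In the latter case it writes $a=A\mathcal D+\delta$, $b=B\mathcal D-\delta$ with $\mathcal D=C^z/\Delta'$ and expands \eqref{2nd-calX=1} modulo $\mathcal D^2$: the constant term vanishes (that is your congruence), but the \emph{linear} term gives the extra relation $AX+BY\equiv 0\pmod{\mathcal D}$ (Lemma~\ref{Zge2z}). Combined with $A+B=c^z/\mathcal D$ this yields $(C/\sqrt{c})^z\le\Delta'\sqrt{\max\{X,Y\}}$, and since $C>\sqrt c$ by \eqref{essential} and $\Delta'\le|X-Y|<\max\{X,Y\}<858\,\tau_c z$, one obtains $(C/\sqrt c)^z<(858\,\tau_c z)^{3/2}$, which \emph{does} bound $z$ (and then $c$). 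Your proposal has no substitute for this second-order step, so the computer search you describe cannot get started.

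A smaller point: your parenthetical ``Lemma~\ref{basic-cong-2} guarantees $\nu_2(a-1)\ge2$ when $2\mid c$'' is only valid when $c=2$ or $4\mid c$; for $c\equiv2\pmod4$ with $c>2$ the paper has to work with $C=c/2$ and invoke Lemma~\ref{complement} separately.
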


Firstly, we finish the case where $Z<2z$.

\begin{lem} \label{sols-calX=1-Z<2z}
The only solution to the simultaneous system of equations \eqref{1st-calX=1} and \eqref{2nd-calX=1} satisfying $Z<2z$ is given by $(z,X,Y,Z)=(3,1,3,5)$ for $(a,b,c)=(5,3,2).$
\end{lem}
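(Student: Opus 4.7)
I would first write $a = c^z - b$, so $a \equiv -b \pmod{c^z}$. Reducing \eqref{2nd-calX=1} modulo $c^z$ (valid since $Z \ge z$), one obtains
\[
(-1)^X b^X + b^Y \equiv 0 \pmod{c^z},
\]
equivalently, since $\gcd(b, c) = 1$,
\[
b^{|Y - X|} \equiv (-1)^{\min(X, Y) + 1} \pmod{c^z}.
\]
The case $X = Y$ is killed by $\gcd(X, Y) = 1$ from Lemma \ref{coprime}(ii), and we have $c^z/2 < a < c^z$ and $1 < b < c^z/2$. Next I would split into two cases.

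\textbf{Case A: $\min(X, Y) \ge 2$.} The lower bound $c^Z \ge a^X > (c^z/2)^X$ together with $Z < 2z$ forces $X < 2z/(z - \log_c 2)$, so $X \le 3$ in general and $X \le 2$ as soon as $c \ge 3$ or $z \ge 4$; a symmetric bound (via $c^Z \ge b^Y$, $b \ge c-1$) holds for $Y$. For each of the few remaining ordered pairs $(X, Y)$ with $\gcd(X,Y) = 1$ and $X \ne Y$, the displayed congruence either collapses to $b \equiv \pm 1 \pmod{c^z}$ (impossible since $1 < b < c^z/2$), or else the exact identity $(c^z - b)^X + b^Y = c^Z$ combined with $c^Z < c^{2z}$ yields the auxiliary inequality $b^{\max(X, Y) - 1} < 2c^z$, which together with $b^{|Y - X|} \ge c^z - 1$ forces $(c, b)$ into a very short list (essentially $\{(2, 3), (3, 2)\}$). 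A case-by-case check then eliminates all of these.

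\textbf{Case B: $\min(X, Y) = 1$.} Since $(X, Y) \ne (1, 1)$, exactly one of $X, Y$ equals $1$. In the sub-case $X = 1, Y \ge 2$, subtracting \eqref{1st-calX=1} from \eqref{2nd-calX=1} gives
\[
b(b^{Y-1} - 1) = c^z(c^{Z-z} - 1).
\]
Since $\gcd(b, c) = 1$, one has $c^z \mid b^{Y-1} - 1$; writing $b^{Y-1} - 1 = c^z m$ then gives $b m = c^{Z-z} - 1$ with $1 \le Z - z < z$, hence $m < c^z/b$ and $b^Y < 2 c^{2z}$. Combined with $b \ge \max\{3, c-1\}$, this makes $Y$ explicit and bounded. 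The twin conditions $b \mid c^{Z-z} - 1$ and $b^{Y-1} = c^z m + 1$ then reduce to a short finite search whose unique output is $(c, z, b, Z-z, m, Y) = (2, 3, 3, 2, 1, 3)$, yielding $a = 5$ and $Z = 5$. The symmetric sub-case $Y = 1, X \ge 2$ gives $c^z \mid a^{X-1} - 1$; with $a < c^z$ this forces $X \ge 3$, the size bound from $Z < 2z$ keeps $X$ small, and a parallel enumeration produces nothing.

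\textbf{Main obstacle.} I expect Case A to be the delicate step: for pairs like $(X, Y) = (2, 5)$ or $(X, 2)$ with $X$ odd, neither the mod-$c^z$ congruence nor the naive size bound alone bounds $c^z$. The trick is to subtract off the leading binomial term of $(c^z - b)^X$ and exploit the \emph{strict} inequality $Z < 2z$ (rather than $Z \le 2z$, which would admit the boundary solutions $(7,2,3)$ and $(13,3,2)$ shown in Lemma \ref{sols-calX=1}) to produce the sharper inequality $b^{\max(X,Y) - 1} < 2c^z$ that collapses $c^z$ into an explicit short list.
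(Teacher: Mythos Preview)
Your Case~B, sub-case $X=1$, $Y\ge 2$, has a genuine gap. You correctly derive $b^{Y-1}\equiv 1\pmod{c^z}$ and $b\mid c^{Z-z}-1$, and you record $b\ge\max\{3,c-1\}$; but you then assert that these ``reduce to a short finite search'' without explaining why $c$ and $z$ are bounded. They are not, from what you have written: the conditions $b\mid c^{Z-z}-1$, $b^{Y-1}\equiv 1\pmod{c^z}$, $c-1\le b<c^z/2$, and $Y\ll z$ do not by themselves exclude large $(c,z)$. The paper closes this gap by using the standing hypothesis $b\equiv\delta_b\pmod{c'}$ far more strongly than your bound $b\ge c-1$: via Lemma~\ref{div} (equivalently, lifting-the-exponent applied to $b^{\Delta}-\delta_b^{\Delta}$ prime by prime) one obtains congruence~\eqref{cong-ell}, hence $b\ge C^z/\Delta'-1\ge C^z/(Y-1)-1$. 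Combining this \emph{lower} bound on $b$ with $b^Y<c^{2z}$ and $C>\sqrt{c}$ is exactly what collapses $(c,z,Y)$ to the finite list $\{(3,2,4),(6,3,4),(6,3,5)\}$ when $Y\ge 4$. Even then the case $Y=3$ is not covered by this inequality and needs a separate argument: the paper writes $b^2=1+Kc^z$, deduces $K^2\equiv 1\pmod{c^{Z-z}}$, and splits on $K=1$ (giving the solution) versus $K>1$ (giving $b>c^{(3Z+1)/8}$, incompatible with $b^3<\frac{4}{3}c^Z$).

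Your Case~A is essentially correct but imprecisely described. The phrase ``a symmetric bound \dots\ holds for $Y$'' is misleading, since $b\ge c-1$ only gives $Y\ll z$, not $Y\le 3$; there are not ``a few remaining ordered pairs $(X,Y)$''. What actually works---and what your later clauses sketch---is that the size bound forces $X=2$, whence $b^{Y-1}<2c^z$ from the equation and $b^{|Y-2|}\ge c^z-1$ from your displayed congruence together force $b\le 2$, hence $(b,c)=(2,3)$; the unique candidate has $Z=2z$, excluded by the strict hypothesis. This agrees with the paper's $X=2$ paragraph, just organized differently.
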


\begin{proof}
Note that $z>1$ as it is clear that $z<Z$.
From equations \eqref{1st-calX=1} and \eqref{2nd-calX=1},
\begin{equation} \label{zZ}
a^X+b^Y=\frac{(a+b)^2}{c^{2z-Z}}.
\end{equation}
This yields that $a^X<a^X+b^Y<4a^2/c^{2z-Z}$, whence
\begin{equation} \label{zZ2}
a^{X-2}c^{2z-Z}<4
\end{equation}
with $2z>Z$.
In particular, $X \le 2$.

Suppose that $X=2$.
Then $c \le 3$ and $Z=2z-1$ by \eqref{zZ2}, thereby \eqref{zZ} becomes $(a^2+b^Y) c=(a+b)^2$.
Reducing this equation modulo $b$ implies that $c \equiv 1 \pmod{b}$, so that $(b,c)=(2,3)$.
However the equation used previously does not hold as $3(a^2+2^Y)>(a+2)^2$.
Thus $X=1$.

Eliminating $a$ from equations \eqref{1st-calX=1}, \eqref{2nd-calX=1} yields
\begin{equation}\label{bYcZz}
b^Y-b=c^Z-c^z.
\end{equation}
Note that $Y \ge 3$ as $b^{Y-1} \equiv 1 \pmod{c^z}$ with $Y>1$ and $c^z=a+b>b$.
Since $2z>Z$, and $b \ge C^z/\Delta'-1$ by congruence \eqref{cong-ell}, it follows that
\[
c^{2z-1} \ge c^Z>b^Y \ge \biggl( \frac{C^z}{\Delta'}-1 \biggr)^Y,
\]
so that $C^z < \Delta'(c^{(2z-1)/Y}+1)$.
Since $\Delta' \le \Delta=Y-1$, one has
\[
C^z <(Y-1)(c^{(2/Y)z-1/Y}+1).
\]

Suppose that $Y \ge 4$.
Recall that $C>\sqrt{c}$ by \eqref{essential}.
The above displayed inequality together with the inequalities $4 \le Y<\tau_c Z$ and $Z<2z$, implies that $(c,z,Y) \in \{(3,2,4),(6,3,4),(6,3,5)\}$.
For each of these triples equation \eqref{bYcZz} does not hold for any possible $b,Z$ with $z<Z<2z$.

Finally, we shall examine the case where $Y=3$, where \eqref{bYcZz} is
\begin{equation}\label{bYcZz-2}
b(b^2-1)=c^z(c^{Z-z}-1).
\end{equation}
Since $b^2=1+K c^z$ for some integer $K \ge 1$, one has $b K=c^{Z-z}-1$.
Noting that $z \ge Z-z$, one reduces this equation modulo $c^{Z-z}$ and squares the resulting one to see that $K^2 \equiv 1 \pmod{c^{Z-z}}$.

If $K=1$, then $b^2=1+c^z$ and $b=c^{Z-z}-1$.
It is not hard to see that these two equations together lead to $(b,c,z,Z)=(3,2,3,5)$, whence $a=c^z-b=5$.
Suppose that $K>1$.
Since $K^2 \equiv 1 \pmod{c^{Z-z}}$, it follows that $K>\sqrt{c^{Z-z}}$.
Therefore,
\[
b=\sqrt{1+K c^z}>\sqrt{K \cdot c^z}>c^{\frac{Z+z}{4}} \ge c^{\frac{3Z+1}{8}}.
\]
On the other hand, from \eqref{bYcZz-2},
\[
b^3=c^Z \cdot \frac{1-1/c^{Z-z}}{1-1/b^2} <c^Z \cdot \frac{4}{3}.
\]
These inequalities together imply that $c^{Z+3}<(4/3)^8\,(<10)$, which clearly does not hold.
\end{proof}

\subsection{Case where $Z \ge 2z$} \label{subsec-Zge2z}

Here we examine the case where $Z \ge 2z$.
By Lemma \ref{coprime}\,(i), we can write $\delta=\delta_a=-\delta_b$ for some $\delta \in \{1,-1\}$.
In what follows, we put
\[ \mathcal D:=\frac{C^z}{\Delta'}. \]
From equation \eqref{1st-calX=1} and congruence \eqref{cong-ell}, the numbers $a$ and $b$ can be written as follows:
\begin{equation} \label{ab-form}
a=A \cdot \mathcal D+\delta, \quad b=B \cdot \mathcal D-\delta,
\end{equation}
where $A$ and $B$ are some positive integers satisfying
\[ A+B=\frac{c^z}{\mathcal D}.\]
One substitutes the forms of $a$ and $b$ in \eqref{ab-form} into equation \eqref{2nd-calX=1}:
\begin{equation} \label{ABcalDcZ}
(A \cdot \mathcal D+\delta)^X+(B \cdot \mathcal D-\delta)^Y=c^Z.
\end{equation}

\begin{lem}\label{Zge2z}
Let $(z,X,Y,Z)$ be a solution to the system of equations \eqref{1st-calX=1} and \eqref{2nd-calX=1} satisfying $Z \ge 2z.$
Then
\[ (C/\sqrt{c})^z \le {\Delta'} \sqrt{ \max\{X,Y\}}.\]
Further, $AX+BY \equiv 0 \pmod{\mathcal D}.$
\end{lem}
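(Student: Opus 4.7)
The plan is to expand equation \eqref{ABcalDcZ} using the binomial theorem and read off the required congruence from the terms of degree at most one in $\mathcal D$, then convert that congruence into the size estimate.

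First, substituting $a=A\mathcal D+\delta$ and $b=B\mathcal D-\delta$ into equation \eqref{2nd-calX=1} gives the identity
\[
(A\mathcal D+\delta)^X+(B\mathcal D-\delta)^Y=c^Z,
\]
which I would reduce modulo $\mathcal D^2$. The crucial observation is that since $C\mid c$, one has $c^z/C^z\in\Z$, so
\[
\frac{c^{2z}}{\mathcal D^2}=\left(\frac{\Delta' c^z}{C^z}\right)^{\!2}\in\Z,
\]
and the assumption $Z\ge 2z$ therefore forces $\mathcal D^2\mid c^Z$. Expanding the binomials, the constant terms give $\delta^X+(-\delta)^Y$, which vanishes: indeed, since $x=y=1$ in \eqref{1st-calX=1}, the third case of Lemma \ref{coprime}\,(i) is excluded, so $\delta_a=-\delta_b=\delta$ and the parity condition there forces $\delta^X=-(-\delta)^Y$. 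Moreover, dividing this relation by $-\delta$ yields $\delta^{X-1}=(-\delta)^{Y-1}$, so the linear-in-$\mathcal D$ terms combine cleanly.

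After these cancellations, the reduction modulo $\mathcal D^2$ becomes
\[
\delta^{X-1}(AX+BY)\,\mathcal D\equiv 0\pmod{\mathcal D^2},
\]
and dividing by $\mathcal D$ (and by the unit $\delta^{X-1}=\pm1$) gives the second assertion
\[
AX+BY\equiv 0\pmod{\mathcal D}.
\]

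For the first assertion, I would use positivity: $AX+BY$ is a positive multiple of $\mathcal D$, hence $\mathcal D\le AX+BY$. Bounding each of $A$ and $B$ by $A+B=c^z/\mathcal D$ yields
\[
\mathcal D\le AX+BY\le (A+B)\max\{X,Y\}=\frac{c^z}{\mathcal D}\max\{X,Y\},
\]
so $\mathcal D^2\le c^z\max\{X,Y\}$. Substituting $\mathcal D=C^z/\Delta'$ and taking square roots gives $(C/\sqrt{c})^z\le\Delta'\sqrt{\max\{X,Y\}}$, as claimed. There is no real obstacle; the only point requiring care is verifying that the constant and linear terms in the binomial expansion behave as stated, which comes down to the sign identities $\delta^X+(-\delta)^Y=0$ and $\delta^{X-1}=(-\delta)^{Y-1}$ furnished by Lemma \ref{coprime}\,(i).
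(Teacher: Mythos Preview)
Your proof is correct and follows essentially the same approach as the paper: reduce equation \eqref{ABcalDcZ} modulo $\mathcal D^2$, use Lemma \ref{coprime}\,(i) to see that the constant term $\delta^X+(-\delta)^Y$ vanishes and that the linear coefficients satisfy $\delta^{X-1}=(-\delta)^{Y-1}$, deduce $\mathcal D\mid AX+BY$, and then bound $AX+BY$ above by $(A+B)\max\{X,Y\}=(c^z/\mathcal D)\max\{X,Y\}$ to obtain the size estimate. The only cosmetic difference is that you spell out explicitly why $\mathcal D^2\mid c^Z$, which the paper leaves implicit.
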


\begin{proof}
Since $c^Z$ is divisible by $\mathcal D^2$ as $Z \ge 2z$, one reduces equation \eqref{ABcalDcZ} modulo $\mathcal D^2$ to find that
\[ a_0+a_1\mathcal D \equiv 0 \mod{\mathcal D^2},\]
where $a_0=\delta^X+(-\delta)^Y$ and $a_1=\delta^{X-1} AX+(-\delta)^{Y-1} BY$.
Since $a_0=0$ by Lemma \ref{coprime}\,(i), one has $(-\delta)^{Y-1}=(-\delta)^Y/(-\delta)=-\delta^X/(-\delta)=\delta^{X-1}$, so that $a_1=\delta^{X-1} (AX+BY)$.
These together show that $a_1$ is divisible by $\mathcal D$, in particular,
\[ AX+BY \ge \mathcal D.\]
On the other hand, by \eqref{ab-form},
\[ AX+BY \le \max\{X,Y\}(A+B)=\max\{X,Y\} \cdot \frac{c^z}{\mathcal D}.\]
These bounds for $AX+BY$ together yield
\[ \frac{\mathcal D^2}{c^z} \le \max\{X,Y\}, \]
which is equivalent to the asserted inequality.
\end{proof}

Below, we distinguish two cases in the same way as in the case where $\mathcal X>1$.

\subsubsection{Case where $c \not \in \{2,3,5,6,7,10,14\}$} \label{subsec-calX=1-clarge}

We perform the algorithm consisting of the following three steps to sieve all possible cases of the system of equations \eqref{1st-calX=1} and \eqref{2nd-calX=1}.
Since it is similar to that of Section \ref{subsec-calX>1-clarge}, we thus mainly refer to distinct places in each step.
The computation time was about 11 days.

\noindent \vskip.2cm
\noindent {\bf Step 1.}
{\it Find all possible $z$ with a corresponding upper bound for $c.$}\par
Adopt $Z_1$ as the upper bound from Lemma \ref{bound-x1y1}, namely, $Z_1:=\lfloor 858\,z \rfloor$.
Use the inequality
\[ (C/\sqrt{c})^z <(\tau_c Z_1)^{3/2}. \]
This follows by Lemma \ref{Zge2z} since $\Delta' \le \Delta=|X-Y|<\max\{X,Y\} <\tau_c Z$.
It yields an absolute upper bound for $z$, namely, $z \le 19$.
Moreover, since $\lim_{c \to \infty}C/\sqrt{c}=\infty$ by \eqref{essential}, an upper bound for $c$ corresponding to each of $z$ can be found, say $c_1$.
Put the resulting pairs $[z,c_1]$ into a list, say $list1$, which contains 18 elements.

\noindent \vskip.2cm
\noindent {\bf Step 2.}
{\it Find all possible numbers $a,b,c,z,\Delta'.$}\par
Set $\Delta_u:=\lfloor \tau_c Z_1 \rfloor, b_1:=\lfloor {c^z}/2 \rfloor$ and $\Delta_l:=\lceil (C/\sqrt{c})^z/ \sqrt{\tau_c Z_1}\, \rceil$.
Further, to restrict the size of $D_v$, use the inequalities
\[
\Delta'<\tau Z_1, \quad (C/\sqrt{c})^z < {\Delta'} \sqrt{\tau Z_1}
\]
for $\tau \in \{\tau_b,\tau_c\}$.
The second inequality easily follows from Lemma \ref{Zge2z}.
Put the resulting tuples $[a,b,c,z,\Delta']$ into a new list, say $list2$, which contains about 31 million elements.
The program for this step for the case where $C=c/2$ is as follows:

\vspace{0.2cm}{\tt
begin

\hskip.2cm for each element in list1 do
\vskip.1cm
\hskip.2cm for $c:=11$ to $c_1$ do
\vskip.1cm
\hskip.2cm if $\text{IsPower}(c)=\text{false}$ and $c \ne 14$ then
\vskip.1cm
\hskip.2cm Create the set
\vskip.1cm
\hskip.2cm $H:=\{D_v: \Delta_l \le D_v < \Delta_u \text{ and } C^z \text{ mod } D_v=0\}$
\vskip.1cm
\hskip.2cm for $D_v$ in $H$ do
\vskip.1cm
\hskip.2cm $\ell:=\text{lcm}(c',C^z \text{ div } D_v)$;
\vskip.1cm
\hskip.2cm for $s$ in $[1,-1]$ do
\vskip.1cm
\hskip.2cm $\delta_a:=s$; \ $\delta_b:=-s$; \ $b_0:=\delta_b$;
\vskip.1cm
\hskip.2cm for $s_b$ in $[1,-1]$ do
\vskip.1cm
\hskip.2cm By the Chinese Remainder Theorem, calculate the least
\vskip.1cm
\hskip.2cm positive integer $b_0$ satisfying
\vskip.1cm
\hskip.2cm \hspace{2cm}$b_0 \equiv \delta_b \pmod{\ell}, \ b_0 \equiv s_b \pmod{2^\iota} $
\vskip.1cm
\hskip.2cm with $\Delta_1=\tau_c Z_1$;
\vskip.1cm
\hskip.2cm for $b:=b_0$ to $b_1$ by $\ell$ do
\vskip.1cm
\hskip.2cm if $b>1$ and $(C/\sqrt{c})^z/\sqrt{\tau_b Z_1}<D_v<\tau_b Z_1$ then
\vskip.1cm
\hskip.2cm Sieve with \eqref{eff-sieve} with $h=b$ and $\Delta_1=\tau_b Z_1$
\vskip.1cm
\hskip.2cm $a:=c^z-b$; \ $\Delta':=D_v$; \ put $[a,\delta_a,b,\delta_b,c,z,\Delta']$ into $list2$

end}

\vskip.3cm
In the above program the for-loop on $s_b$ is omitted when $C \ne c/2$.
One probably has no choice but to distinguish the cases to keep the number of elements in $list2$ under control due to its huge size.
The most time-consuming part was the case with $z=2$.

\noindent \vskip.3cm
\noindent {\bf Step 3.}
{\it Find all possible numbers $a,b,c,z,X,Y,Z.$}\par
This is basically the same as Step 3 of Section \ref{subsec-calX>1-clarge} applied to the elements in $list2$.
It turns out there is no output, and this completes the proof of Lemma \ref{sols-calX=1} for the values of $c$ under consideration.

\subsubsection{Case where $c \in \{2,3,5,6,7,10,14\}$} \label{subsec-calX=1-csmall}

We perform the algorithm consisting of the following several steps, where the notation in Section \ref{subsec-calX=1-clarge} are used.
It is basically similar to that of Section \ref{subsec-calX=1-clarge}.
The computation time was about 1 hour.

\noindent \vskip.2cm
\noindent {\bf Step 1.}
{\it For each $c,$ find all possible $z.$}\par
For each $c$, find all $z$ satisfying the inequality
\[
(C/\sqrt{c})^z<(\tau_c Z_1)^{3/2},
\]
where $Z_1$ is defined as in Step 1 of Section \ref{subsec-calX=1-clarge}.
Put the resulting pairs $[c,z]$ into a new list, say $list1$, which contains 235 elements.

\noindent \vskip.2cm
\noindent {\bf Step 1/a.}
{\it For each element in $list1,$ find all possible $t.$}\par
This is similar to Step 1/a of Section \ref{subsec-calX>1-csmall}.
Find all $t$ satisfying
\begin{equation}\label{ineq-calX=1-csmall-1/b}
\ell-1 \le b_1, \ \ C^t<\tau_{\ell} Z_1, \ \ (C/\sqrt{c})^z < C^t \sqrt{\tau_{\ell} Z_1},
\end{equation}
where $b_1$ is defined as in Step 2 of Section \ref{subsec-calX=1-clarge}.
Put the resulting triples $[c,z,t]$ into a new list, say $list1/a$, which contains 629 elements.

\noindent \vskip.2cm
\noindent {\bf Step 1/b.}
{\it For each element in $list1/a,$ find an upper bound for $Z.$}\par
Perform the same algorithm as in Step 1/b of Section \ref{subsec-calX>1-csmall} with $Z_u:=Z_1$, and sieve with the last two inequalities in \eqref{ineq-calX=1-csmall-1/b}.
Put the resulting quadruples $[c,z,t,Z_1]$ into a new list, say $list1/b$, which contains 351 elements.

\vskip.2cm \noindent {\bf Step 2.}
{\it Find all possible numbers $a,b,c,z,X,Y,Z.$}\par
For each element in $list1/b$, perform the same algorithms as in Steps 2 \& 3 of Section \ref{subsec-calX=1-clarge}, with the definition of $D_v$ replaced by $D_v:=C^t$.
The output, together with Lemma \ref{sols-calX=1-Z<2z}, are the solutions described in Lemma \ref{sols-calX=1}.

\vskip.3cm
This completes the proof of Theorem \ref{th1}.

\section{Proof of Theorem \ref{th2}} \label{sec-th2}%

The proof proceeds along similar lines to that of Theorem \ref{th1}.
Therefore, below we mainly discuss differences between them.
Put $P:=\prod_{p \in S}p$.
Then $P$ is a divisor of $c$, and $M_S=P$ or $4P$ according to cases (I) or (II).
Note that we are in the case where $c$ is even if $S$ is empty, and recall that
\begin{equation}\label{cS-bound}
c_S > \sqrt{c}.
\end{equation}

By Lemma \ref{weakform}\,(i,\,ii) for $d=P$, we may assume for case (I) that each of $a$ and $b$ is congruent to $1$ or $-1$ modulo $M_S$.
This is similar to case (II) except when $c$ is not divisible by 4, where $4P$ is not a divisor of $c$.
In this exceptional case, $c[S]=\frac{1}{2}c[S] \cdot 2 \ge \frac{1}{2}c[S \cup \{2\}]=c_S>\sqrt{c}$ by \eqref{cS-bound}, so that this case is reduced to case (I).

If $S$ is non-empty, then, since $P>2$, for each $h \in \{a,b\}$, we can uniquely define $\delta_h \in \{1,-1\}$ by the following congruence:
\begin{eqnarray}\label{ass-cong-th3}
h \equiv \delta_h \mod{P}.
\end{eqnarray}
When $c$ is even, since $h$ is odd, we can also uniquely define $\delta_{h,4} \in \{1,-1\}$ by the following congruence:
\begin{eqnarray}\label{ass-cong-mod4-th3}
h \equiv \delta_{h,4} \mod 4.
\end{eqnarray}
Note that $\delta_{h,4}=\delta_h$ in case (II).

We begin with the following lemma.

\begin{lem}\label{coprime-th3}
Let $(x,y,z)$ be a solution to equation $\eqref{abc}.$
Then the following hold.
\begin{itemize}
\item[\rm (i)]
$x$ or $y$ is odd.
\item[\rm (ii)]
$x,y$ and $P$ are relatively prime.
\end{itemize}
\end{lem}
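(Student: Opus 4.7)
The argument parallels Lemma~\ref{coprime}, adapted to the weaker hypothesis that $a,b \equiv \pm 1$ hold only modulo $M_S$ (which divides $4P$) rather than modulo the whole of $c$.

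For part (i), I would reduce \eqref{abc} modulo $P$ when $S \neq \emptyset$, or modulo $4$ when $S = \emptyset$ (which, under $c_S > \sqrt{c}$, forces case (I) with $c$ even). In the first situation, since $P \mid c^z$ and $P \geq 3$, the congruence $\delta_a^x + \delta_b^y \equiv 0 \pmod P$ (obtained via \eqref{ass-cong-th3}) lifts to the integer identity $\delta_a^x = -\delta_b^y$; one side must equal $-1$, which forces the corresponding exponent to be odd. In the second situation, one first checks that $c^z \equiv 0 \pmod 4$ (automatic if $4 \mid c$, while $c = 2$ forces $z \geq 2$ from $a^x+b^y \geq 4$), and then runs the same argument modulo $4$ using \eqref{ass-cong-mod4-th3}.

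For part (ii), I would suppose for contradiction that some prime $p$ divides $\gcd(x,y,P)$, so $p \in S$ is odd. Writing $x = p x_0$ and $y = p y_0$, use the factorization
\[
a^x + b^y = L \cdot R, \quad L = a^{x_0} + b^{y_0}, \quad R = \frac{(a^{x_0})^p + (b^{y_0})^p}{a^{x_0} + b^{y_0}},
\]
with $LR = c^z$. For each $r \in S$, since $a^{x_0} \equiv \delta_a^{x_0}$ and $b^{y_0} \equiv \delta_b^{y_0} \pmod r$, the case $\delta_a^{x_0} = \delta_b^{y_0}$ gives $L \equiv \pm 2$ and $R \equiv 1 \pmod r$, contradicting $r \mid LR = c^z$; thus $\delta_a^{x_0} = -\delta_b^{y_0}$ holds as a global identity, and Lemma~\ref{padic-lem} (applied with $U = a^{x_0}$, $V = -b^{y_0}$) then pins down $\nu_p(R) = 1$ and $\nu_r(R) = 0$ for every $r \in S \setminus \{p\}$. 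Consequently $R$ can absorb at most one factor of $p$ together with primes of $c$ lying outside $S$. Combining the size bound $L \leq 2\max(a^{x_0}, b^{y_0}) \leq 2 c^{z/p}$ with $LR = c^z$ and the hypothesis $c_S > \sqrt c$ (which bounds the non-$S$ part of $c$ by less than $\sqrt c$ or $\sqrt c / 2$ in the two cases), one should force a quantitative size contradiction.

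The main obstacle will be part (ii). In Lemma~\ref{coprime}(ii) the full hypothesis $a,b \equiv \pm 1 \pmod c$ forced $R = p$ outright, immediately contradicting $R > p$ for $U, V \geq 2$; here the primes of $c$ lying outside $S$ are unconstrained and can in principle divide $R$, so the contradiction must be extracted from a quantitative comparison driven by $c_S > \sqrt c$. The most delicate subcase is case (I) with $c[S] \leq \sqrt c < c[2]$, where the non-$S$ part of $c$ is still large and the crude size argument alone breaks down; there one will presumably have to exploit the mod-$4$ information (analogous to the use of Lemma~\ref{complement} in the proof of Theorem~\ref{th1}) or a more careful two-adic analysis of $L$ to close the gap.
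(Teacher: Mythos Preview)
Your plan for part (i) is essentially the paper's argument. One small correction: $S=\emptyset$ does not force case (I); case (II) with $S=\emptyset$ has $M_S=4$ and $c_S=\tfrac12 c[2]$, which is perfectly consistent. It does not matter, since either way $c$ must be even and the mod-$4$ reduction goes through. Your handling of the residual possibility $2\parallel c$ with $z=1$ is a bit implicit: the paper dispatches it by observing that then $c_S\le c[2]=2>\sqrt c$ forces $c<4$, hence $c=2$, which is absurd.

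For part (ii) you have correctly located the difficulty but missed the one-line observation that dissolves it: the cofactor $R=(A^p+B^p)/(A+B)$ is \emph{always odd} (it is an alternating sum of $p$ terms each congruent to $1$ mod $2$). Hence $\nu_2(R)=0$ and the entire $2$-part $c[2]^z$ of $c^z$ sits inside $L$, with no appeal to \eqref{ass-cong-mod4-th3} or any finer two-adic analysis. So you should run your divisibility argument over $S'=S\cup\{2\}$ when $c$ is even, not over $S$; then $c[S']\ge c_S>\sqrt c$ holds uniformly in both cases (I) and (II), and your ``delicate subcase'' $c[S]\le\sqrt c<c[2]$ simply does not arise.

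With this in hand the paper's endgame is also tighter than what you sketch. Rather than bounding $L$ from above by $2c^{z/p}$, use the divisibility $(c[S'])^z/p\mid L$ directly in $R/L=c^z/L^2$ to get
\[
\frac{A^p+B^p}{(A+B)^2}=\frac{R}{L}\le p^2\Bigl(\frac{c}{c[S']^2}\Bigr)^{z}<p^2,
\]
where $A=a^{x_0}$, $B=b^{y_0}$. Assuming $A>B$, this gives $A^{p-2}<4p^2$; since $A\equiv\pm1\pmod p$ forces $A\ge p-1$, one is reduced to $p=3$ and $A<36$, a short finite check. Your upper-bound-on-$L$ route would also close eventually, but leaves a larger residual search.
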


\begin{proof}
(i) First consider the case where $S$ is non-empty.
By \eqref{ass-cong-th3} one reduces equation \eqref{abc} modulo $P$ to find that ${\delta_a}^x \equiv -{\delta_b}^y \pmod{P}$.
Thus ${\delta_a}^x=-{\delta_b}^y$, implying the assertion.
Finally, suppose that $S$ is empty and $c$ is even.
Similarly to the previous case, one can obtain the assertion by using \eqref{ass-cong-mod4-th3} and reducing equation \eqref{abc} modulo $4$, whenever $c^z \equiv 0 \pmod{4}$.
If $c^z \not\equiv 0 \pmod{4}$, then $2 \parallel c$ and $z=1$, so that $c=2$ since $\sqrt{c}<c_S \le c[2]=2$ by \eqref{cS-bound}, which clearly contradicts equation \eqref{abc}.\par
(ii) By (i) suppose that $x,y$ have some common odd prime factor belonging to $S$, say $p$.
We shall use the notation in the proof of Lemma \ref{coprime}\,(ii), and we take $r$ as any prime belonging to $S'$, where $S'=S \cup \{2\}$ if $c$ is even, and $S'=S$ if $c$ is odd.
Since each of $a,b$ is congruent to $\pm1$ modulo $r$, it turns out that $L$ is divisible by $r$.
Recalling that $R$ is odd, and that $\nu_r(R)>0$ if and only if $r=p$ with $\nu_p(R)=1$, one infers from equation \eqref{factor} that $(c[S'])^z/p$ divides $L$.
Replacing this divisibility relation by an inequality, one finds that
\[
\frac{R}{L} = \frac{c^z}{L^2} \le \frac{c^z}{\bigr( (c[S'])^z/p \bigr)^2}=p^2 \cdot \left( \frac{c}{c[S']^2}\right)^z.
\]
Since $c[S']=c[S \cup \{2\}] \ge c_S$, it follows from \eqref{cS-bound} that
\begin{equation}\label{ABp}
\frac{A^p+B^p}{(A+B)^2}< p^2,
\end{equation}
where $A:=a^{x_0}$ and $B:=b^{y_0}$.
Note that each of $A,B$ is congruent to $\pm 1$ modulo $p$.
If $A>B$ (the other case is similar), then
\[ A^p<A^p+B^p<p^2(A+B)^2<p^2(2A)^2, \]
so that $A^{p-2}<4p^2$.
Since $A \ge p+1$, one has $p=3$, and $A<36$.
However, none of all possible pairs $(A,B)$ satisfies \eqref{ABp}.
\end{proof}

Suppose that the system of equations \eqref{1st} and \eqref{2nd} has a solution $(x,y,z,X,Y,Z)$ with $(x,y,z) \ne (X,Y,Z)$ and $z \le Z$.
Define the positive integer $\Delta$ as in the proof of Theorem \ref{th1}.

\begin{lem}\label{div-th3}
${c[S]}^z$ divides $\gcd(a-\delta_a,b-\delta_b) \cdot \Delta,$ and ${c[2]}^z$ divides $\gcd(a-\delta_{a,4},b-\delta_{b,4}) \cdot \Delta.$
\end{lem}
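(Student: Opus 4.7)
The plan is to follow the template set by Lemmas \ref{basic-cong}, \ref{div} and \ref{complement} from the proof of Theorem \ref{th1}, adapted to the modulus structure imposed by $S$. Note that if $S$ is empty then $c[S]=1$ and the first assertion is trivial, and if $c$ is odd then $c[2]=1$ and the second assertion is trivial; so I may assume $S$ is non-empty for the first part and $c$ is even for the second part.

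First I would derive the basic congruence $h^{\Delta}\equiv\varepsilon_{h}\pmod{c^{z}}$ for some $\varepsilon_{h}\in\{1,-1\}$ and each $h\in\{a,b\}$. Since $z\le Z$, reducing \eqref{1st} and \eqref{2nd} modulo $c^{z}$ yields $a^{x}\equiv -b^{y}$ and $a^{X}\equiv -b^{Y}\pmod{c^{z}}$. The same manipulation as in the proof of Lemma \ref{basic-cong} gives $a^{xY}\equiv(-1)^{y+Y}a^{Xy}$ and $b^{xY}\equiv(-1)^{x+X}b^{Xy}\pmod{c^{z}}$, which forces $h^{\Delta}\equiv\varepsilon_{h}\pmod{c^{z}}$ because $a,b$ are coprime to the modulus.

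For the first divisibility, the assumption that $S$ is non-empty yields $P\ge 3$. Reducing congruence \eqref{ass-cong-th3} to the $\Delta$-th power gives $h^{\Delta}\equiv\delta_{h}^{\Delta}\pmod{P}$, and combining this with $h^{\Delta}\equiv\varepsilon_{h}\pmod{c^{z}}$ (using $P\mid c^{z}$ and $P>2$) forces $\varepsilon_{h}=\delta_{h}^{\Delta}$, so that
\[
h^{\Delta}\equiv\delta_{h}^{\Delta}\pmod{c[S]^{z}}.
\]
For each prime $p\in S$ we have $h\equiv\delta_{h}\pmod{p}$, so Lemma \ref{padic-lem} applied with $(U,V,N)=(h,\delta_{h},\Delta)$ gives $\nu_{p}(h^{\Delta}-\delta_{h}^{\Delta})=\nu_{p}(h-\delta_{h})+\nu_{p}(\Delta)$, and the previous display forces $\nu_{p}\bigl((h-\delta_{h})\cdot\Delta\bigr)\ge z\,\nu_{p}(c)$. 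Taking the minimum over $h\in\{a,b\}$ and varying $p$ through $S$ yields $c[S]^{z}\mid\gcd(a-\delta_{a},b-\delta_{b})\cdot\Delta$.

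For the second divisibility, with $c$ even, I repeat the argument with the sign determined modulo $4$. Congruence \eqref{ass-cong-mod4-th3} gives $h^{\Delta}\equiv\delta_{h,4}^{\Delta}\pmod{4}$. Provided $c[2]^{z}\ge 4$, this combined with $h^{\Delta}\equiv\varepsilon_{h}\pmod{c[2]^{z}}$ forces $\varepsilon_{h}=\delta_{h,4}^{\Delta}$, since $1$ and $-1$ are already distinct modulo $4$; hence $h^{\Delta}\equiv\delta_{h,4}^{\Delta}\pmod{c[2]^{z}}$. Since $h\equiv\delta_{h,4}\pmod{4}$, Lemma \ref{padic-lem} at $p=2$ applies and the same minimum argument closes this case. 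The degenerate possibility $c[2]^{z}=2$ (i.e.\ $2\parallel c$ and $z=1$) is handled directly because $a,b$ are odd, whence $a-\delta_{a,4}$ is even and $c[2]^{z}=2$ divides $\gcd(a-\delta_{a,4},b-\delta_{b,4})\cdot\Delta$ trivially.

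The only technical obstacle is the bookkeeping on the ``sign modulus'' step, where one must verify that the modulus used to pin down $\varepsilon_{h}$ is strictly greater than $2$, so that $+1$ and $-1$ are distinguishable; this is automatic in case $P\ge 3$ (giving the first assertion cleanly) and requires the minor case-split above in the $2$-adic setting. Once the two congruences $h^{\Delta}\equiv\delta_{h}^{\Delta}\pmod{c[S]^{z}}$ and $h^{\Delta}\equiv\delta_{h,4}^{\Delta}\pmod{c[2]^{z}}$ are in hand, the remaining valuation estimate is a direct application of Lemma \ref{padic-lem}, exactly as in Lemmas \ref{div} and \ref{complement}.
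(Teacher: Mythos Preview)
Your proof is correct and follows essentially the same approach as the paper: derive $h^{\Delta}\equiv\varepsilon_h\pmod{c^z}$ as in Lemma~\ref{basic-cong}, pin down $\varepsilon_h$ using the congruence modulo $P$ (respectively modulo $4$), and then apply Lemma~\ref{padic-lem} prime by prime as in Lemma~\ref{div}. The paper's version is terser---it reduces modulo each $p\in S$ separately rather than modulo $P$, and for the $2$-adic assertion it simply invokes Lemma~\ref{complement} without re-deriving it---but your explicit treatment of the edge cases ($S=\emptyset$, $c$ odd, $c[2]^z=2$) is a welcome addition and the argument is sound throughout.
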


\begin{proof}
In the same way as in the proof of Lemma \ref{basic-cong}, for each $h \in \{a,b\}$ one obtains the congruence $h^{\Delta} \equiv \varepsilon \pmod{c^z}$ for some $\varepsilon=\pm1$.
Reducing this congruence modulo each $p \in S$, and combining the resulting one with congruence \eqref{ass-cong-th3} yields $\varepsilon \equiv {\delta_h}^{\Delta} \pmod{p}$, so that $\varepsilon={\delta_h}^{\Delta}$, and $h^{\Delta} \equiv {\delta_h}^{\Delta} \pmod{p^z}$.
Now the first assertion follows similarly to the proof of Lemma \ref{div}.
The second one is just a redisplaying of Lemma \ref{complement}.
\end{proof}

In what follows, we frequently use the Vinogradov notation $f \ll g$, which means that $|f/g|$ is less than some positive absolute constant.
Note that for each Vinogradov notation appearing below the corresponding implied constant is effectively computable.

\begin{lem}\label{1st-bound-th2}
$Z \ll (\log \log c)^2(\log a)\log b.$
\end{lem}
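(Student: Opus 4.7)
The plan is to apply Proposition~\ref{Bu-madic} with modulus $M=M_S$, mirroring the role of $C$ in the proof of Lemma~\ref{Kc}. First, by Lemma~\ref{weakform} applied with $d=M_S$ together with the hypothesis $\gcd(e_{M_S}(a),e_{M_S}(b))=1$, we may assume without loss of generality (and without altering $Z$) that $a \equiv \delta_a \pmod{M_S}$ and $b \equiv \delta_b \pmod{M_S}$ for signs $\delta_a,\delta_b \in \{1,-1\}$. Since $M_S \mid c$ (in case (II) after reducing the subcase $c\not\equiv 0\pmod 4$ to case (I) as indicated at the start of the section), one obtains the lower bound $\nu_{M_S}(a^X+b^Y)=\nu_{M_S}(c^Z)\geq Z$.

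Set $(\alpha_1,\alpha_2,b_1,b_2)$ as in the proof of Lemma~\ref{Kc}, with signs adjusted using Lemma~\ref{coprime-th3}(i) so that $\alpha_j\equiv 1\pmod{M_S}$ for $j=1,2$; this permits the choice ${\rm g}=1$, and in case (II) the required 2-adic conditions are automatic since $4\mid M_S$. Taking $H_1=\max\{\log a,\log M_S\}$ and $H_2=\max\{\log b,\log M_S\}$, Proposition~\ref{Bu-madic} gives
\[
Z \;\leq\; \frac{53.6\,H_1 H_2}{\log^4 M_S}\,\mathcal B^2,\quad \mathcal B=\max\{\log b'+\log\log M_S+0.64,\;4\log M_S\},
\]
with $b'=X/H_2+Y/H_1$.

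If the second term in $\mathcal B$ dominates, a routine computation yields $Z \ll (\log a)(\log b)/\log^2 M_S$, which implies the claim since $M_S\geq 3$. Otherwise, the inequalities \eqref{trivial-ineqs} give $b' \ll Z\log c/(\log a\log b)$, whence $\log b' \ll \log Z+\log\log c$ and therefore $\mathcal B \ll \log Z+\log\log c$. Substitution leads to the implicit inequality
\[
Z \;\ll\; (\log a)(\log b)(\log Z+\log\log c)^2.
\]
Splitting into the subcases $\log Z \leq \log\log c$ and $\log Z > \log\log c$, and in the latter applying the elementary lemma that $Z\leq K(\log Z)^2$ forces $Z\leq C_0 K\log^2 K$, together with the bounds $\log a,\log b \leq z\log c \leq Z\log c$, one unfolds the inequality to $Z \ll (\log\log c)^2(\log a)\log b$.

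The main technical nuisance lies in unwinding this self-referential bound while keeping track of the sizes of $a$ and $b$ relative to $Z$ and $c$; once this is settled, the remaining bookkeeping (including the minor adjustments for small $a,b$ in the choice of $H_j$) is routine.
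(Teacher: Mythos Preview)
Your approach is essentially the same as the paper's: apply Proposition~\ref{Bu-madic} to bound $\nu_M(a^X+b^Y)$ from above and combine with the lower bound $\nu_M(c^Z)\gg Z$. The paper differs only in two cosmetic choices: it takes $(\alpha_1,\alpha_2)=(a^2,b^2)$ with $(b_1,b_2)=(X,Y)$, which makes both $\alpha_j\equiv 1$ automatically and spares the sign bookkeeping you borrow from Lemma~\ref{Kc}; and it does not use $M=M_S$ but instead treats the (overlapping) cases ``$S$ nonempty'' and ``$c$ even'' separately, taking $M=P$ and $M=4$ respectively.

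There is one genuine gap in your version. In case~(I) with $S=\emptyset$ (which is allowed, and forces $c$ even via $c_S=c[2]>\sqrt{c}$), you have $M_S=\prod_{p\in\emptyset}p=1$, so Proposition~\ref{Bu-madic} does not apply and the factor $\log^4 M_S$ in your denominator vanishes. The paper's case split exists precisely to catch this: when $S=\emptyset$ one falls back on $M=4$ and the congruence~\eqref{ass-cong-mod4-th3}. Your sketch is otherwise correct, and the self-referential unwinding at the end is fine once phrased in terms of $T=Z/((\log a)\log b)$ as the paper does, since then the implicit inequality $T\ll(\log T+\log\log c)^2/\log^4 M$ involves only $T$ and $c$.
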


\begin{proof}
We shall independently consider the two cases where $S$ is nonempty and $c$ is even.
According to these cases, we put $M:=P$ and $M:=4$, respectively.
From equation \eqref{2nd},
\[ \nu_{M}(a^{2X}-b^{2Y}) \gg Z. \]
To obtain an upper bound for the left-hand side above, we apply Proposition \ref{Bu-madic} for $(\alpha_1,\alpha_2):=(a^2,b^2),(b_1,b_2):=(X,Y)$.
Congruences \eqref{ass-cong-th3}, \eqref{ass-cong-mod4-th3} together with Lemma \ref{coprime-th3} enable one to take ${\rm g}:=1$.
Also, since $M \le \min\{a,b\}+1 \le \min\{a,b\}^2$, one may set $H_1:=2\log a$ and $H_2:=2\log b$.
Proposition \ref{Bu-madic} gives
\[
\nu_{M}(a^{2X}-b^{2Y}) \ll \frac{\log a\,\log b}{\log^4 M} \cdot \mathcal B^2,
\]
where
\[
\mathcal B=\max \biggl\{ \log \biggl( \frac{X}{2\log b}+\frac{Y}{2\log a} \biggr)+\log \log M, \,\log M \biggl\}.
\]
Since
\[
\log \biggl( \frac{X}{2\log b}+\frac{Y}{2\log a} \biggr)+\log \log M <\log \biggl( \frac{\log c \ \log M}{\log a \ \log b}\,Z\biggl),
\]
the bounds for $\nu_{M}(a^{2X}-b^{2Y})$ together lead to
\[ T \ll \frac{1}{\log^4 M} \cdot {\mathcal B'}^2, \]
where
\[
T:=\frac{Z}{\log a\,\log b}, \quad \mathcal B':=\log\,\max \bigr\{ (\log c)(\log M)\,T ,\,M \big\}.
\]
It is not hard to see from the above inequality that $T \ll (\log \log c)^2$.
\end{proof}

By Lemma \ref{1st-bound-th2} with inequality \eqref{Delta-ele-upp}, we estimate $\Delta$ from above as follows:
\[
\Delta<\frac{\log^2 c}{\log a\,\log b}\,zZ \ll (\log \log c)^2(\log c)^2 z.
\]
This shows that the size of $\Delta$ is much smaller than that of $c^z$.

Put $\Delta'$ and $\Delta_2$ as follows:
\[ \begin{cases}
\,\Delta':=\gcd(\Delta,{c[S]}^z), \ \ \Delta_2:=\gcd(\Delta,{c[2]}^z)
& \text{for case (I)},\\
\,\Delta':=\gcd(\Delta,(c[S]c[2])^z)
& \text{for case (II)}.
\end{cases}\]
By Lemma \ref{div-th3},
\[ \begin{cases}
\,h \equiv \delta_h \mod{{c[S]}^z/\Delta'}, \quad h \equiv \delta_{h,4} \mod{{c[2]}^z/\Delta_2}
& \text{for case (I)},\\
\,h \equiv \delta_h \mod{(c[S]c[2])^z/\Delta'}
& \text{for case (II)}
\end{cases}\]
for each $h \in \{a,b\}$.

In what follows, for case (I), we put
\[ \mathcal D:=\begin{cases}
\, {c[S]}^z/\Delta' & \text{if $c[S]>c[2]$},\\
\, {c[2]}^z/\Delta_2 & \text{if $c[2]>c[S]$},
\end{cases} \]
and $\mathcal D:=(c[S]c[2])^z/\Delta'$ for case (II).
Then, for each $h \in \{a,b\}$, it holds that
\begin{align}\label{cong-D-th3}
h \equiv \epsilon_h \mod{\mathcal D}
\end{align}
for some $\epsilon_h \in \{1,-1\}$.
Further,
\[
\mathcal D \ge \frac{{c_S}^z}{\Delta} \gg \frac{{c_S}^z}{(\log \log c)^2(\log c)^2 z}.
\]
In view of \eqref{cS-bound}, except for only finitely many pair $(c,z)$ being effectively determined, we may estimate $\mathcal D$ from below as follows:
\begin{equation}\label{calD-estimate}
\mathcal D>2, \quad \log \mathcal D \gg z \log c.
\end{equation}

In what follows, we assume that $\max\{a,b,c\}$ is sufficiently large so that at least one of $c$ and $z$ is sufficiently large, thereby inequalities \eqref{calD-estimate} hold.

\begin{lem}\label{second-bound-th2}
$z \cdot Z \ll \dfrac{(\log a)\log b}{\log^2 c}.$
\end{lem}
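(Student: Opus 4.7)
\emph{Plan.} I apply Proposition \ref{Bu-madic} with the large modulus $M:=\mathcal D$, leveraging the congruence $h\equiv \epsilon_h\pmod{\mathcal D}$ from \eqref{cong-D-th3} together with the lower estimate $\log\mathcal D\gg z\log c$ from \eqref{calD-estimate}. Comparing the resulting upper bound for $\nu_{\mathcal D}(c^Z)$ against the trivial lower bound $\lfloor Z/z\rfloor\ge Z/(2z)$ extracts an additional factor of $z^2$ through $\log^2\mathcal D$, which is precisely the gain over Lemma \ref{1st-bound-th2} that the present lemma asks for.

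The setup copies that of the proof of Lemma \ref{Kc}: by Lemma \ref{coprime-th3}(i) one chooses signs so that with $\alpha_1:=\pm a$, $\alpha_2:=\pm b$ and $b_1:=X$, $b_2:=Y$ one has $\alpha_j\equiv 1\pmod{\mathcal D}$ and $\alpha_1^{X}-\alpha_2^{Y}=\pm(a^X+b^Y)=\pm c^Z$. The coprimality $\gcd(X,Y,\mathcal D)=1$ demanded by Proposition \ref{Bu-madic} holds, since every prime divisor of $\mathcal D$ lies in $S\cup\{2\}$: Lemma \ref{coprime-th3}(ii) rules out any odd prime of $S$ dividing both $X$ and $Y$, while Lemma \ref{coprime-th3}(i) guarantees at least one of $X,Y$ is odd. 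Regarding the $\nu_2$-hypothesis, whenever $2\mid\mathcal D$ one has $\nu_2(\mathcal D)\ge z\nu_2(c)-\nu_2(\Delta')\ge z\nu_2(c)-\log_2\Delta$, and $\log_2\Delta\ll \log z+\log\log c$ by the estimate on $\Delta$ recorded just after Lemma \ref{1st-bound-th2}; hence $\nu_2(\mathcal D)\ge 2$ once $\max\{a,b,c\}$ is sufficiently large. Thus $\mathrm g=1$ is admissible.

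By \eqref{cong-D-th3}, $a,b\ge \mathcal D-1$, so setting $H_1:=\log a+\log 2$ and $H_2:=\log b+\log 2$ gives both $H_j\ge \log\mathcal D$. Proposition \ref{Bu-madic} then yields
\[
\frac{Z}{2z}\le \nu_{\mathcal D}(c^Z)\ll \frac{(\log a)(\log b)}{\log^4\mathcal D}\cdot\mathcal B^2,
\]
where $\mathcal B=\max\{\log b'+\log\log\mathcal D+0.64,\,4\log\mathcal D\}$ and $b'=X/H_2+Y/H_1$. The trivial inequalities $X<Z\log c/\log a$ and $Y<Z\log c/\log b$ combined with Lemma \ref{1st-bound-th2} give $b'\ll (\log\log c)^2\log c$, and hence $\log b'\ll \log\log c$, while $\log\log\mathcal D\ll \log z+\log\log c$. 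The first argument of the maximum is therefore $O(\log z+\log\log c)$ whereas the second is $\gg z\log c$; under the sufficient-size hypothesis, the second dominates and $\mathcal B\ll \log\mathcal D$.

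Plugging back and invoking \eqref{calD-estimate},
\[
\frac{Z}{z}\ll \frac{(\log a)(\log b)}{\log^2\mathcal D}\ll \frac{(\log a)(\log b)}{z^2\log^2 c},
\]
which after rearrangement is the claim. The main technical hurdle is the uniform verification of the $\nu_2$-hypothesis across cases (I) and (II); once the effective polynomial-in-$z$ bound on $\Delta$ is in hand, this reduces to a routine numerical comparison, and the small residual cases are absorbed into the exceptions that Theorem \ref{th2} already permits.
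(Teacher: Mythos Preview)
Your argument is correct and follows the same overall strategy as the paper: apply Proposition~\ref{Bu-madic} with modulus $M=\mathcal D$, use \eqref{cong-D-th3} to take $\mathrm g=1$, and exploit $\log\mathcal D\gg z\log c$ from \eqref{calD-estimate} to gain the extra factor $z^2$ over Lemma~\ref{1st-bound-th2}.

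The one genuine difference is your choice of $(\alpha_1,\alpha_2)$. You follow the template of Lemma~\ref{Kc} and set $(\alpha_1,\alpha_2)=(\epsilon_a a,\epsilon_b b)$, which forces you to argue separately that $\epsilon_a^X=-\epsilon_b^Y$ (this comes from reducing \eqref{2nd} modulo $\mathcal D>2$, not really from Lemma~\ref{coprime-th3}(i) as you cite) and then to verify the $\nu_2$-hypothesis by showing $4\mid\mathcal D$ whenever $2\mid\mathcal D$. The paper instead reuses the parameters of Lemma~\ref{1st-bound-th2}, taking $(\alpha_1,\alpha_2)=(a^2,b^2)$ and $(b_1,b_2)=(X,Y)$; then $a^2\equiv b^2\equiv 1\pmod{\mathcal D}$ is immediate from \eqref{cong-D-th3}, and since $a,b$ are odd whenever $c$ is even one has $\nu_2(a^2-1),\nu_2(b^2-1)\ge 3$ automatically. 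This squaring trick sidesteps both the sign bookkeeping and the $2$-adic case analysis you carry out, at no cost to the final estimate. Either route yields the same bound.
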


\begin{proof}
We proceed along similar lines to the proof of Lemma \ref{1st-bound-th2}, namely, we shall again apply Proposition \ref{Bu-madic}, with the same parameters other than $M$ as those in the proof of that lemma.
In this case, we set $M:=\mathcal D$.
This choice is justified by Lemma \ref{coprime-th3}, \eqref{cong-D-th3}, \eqref{calD-estimate}.
Proposition \ref{Bu-madic} gives
\[
\nu_{M}(a^{2X}-b^{2Y}) \ll \frac{\log a\, \log b}{\log^4 M} \cdot \mathcal B^2,
\]
where $\mathcal B$ is given as in the proof of Lemma \ref{1st-bound-th2}.
Since $\nu_{M}(a^{2X}-b^{2Y}) \gg Z/z$, the two inequalities together lead to
\[ T \ll \frac{z^2}{\log^4 M} \cdot {\mathcal B'}^2, \]
where
\[
T:=\frac{zZ}{\log a\,\log b}, \quad \mathcal B':=\log\,\max \bigr\{ (\log c)(\log M)\,T/z, \,M \bigr \}.
\]
If $(\log c)(\log M)\,T/z \le M$, then, since one may assume that $\log M=\log \mathcal D \gg z \log c$ by \eqref{calD-estimate},
\[
T \ll \frac{z^2}{\log^2 M} \ll \frac{1}{\log^2 c},
\]
proving the assertion.
While if $(\log c)(\log M)\,T/z>M$, by Lemma \ref{1st-bound-th2},
\[ \frac{M}{(\log c)\log M}<T/z \ll (\log \log c)^2, \]
implying that $\max\{c,z\} \ll 1$ as $\log M \gg z \log c$.
\end{proof}

Now, Lemma \ref{second-bound-th2} immediately yields
\[
z \cdot Z \ll \frac{\log a}{\log c} \cdot \frac{\log b}{\log c} \le \min \biggl\{\dfrac{z^2}{xy},\dfrac{Z^2}{XY} \biggl\},
\]
so that $Z \ll z$ and $\max\{x,y,X,Y\} \ll 1$.
In particular, $\Delta \ll 1$ and $\mathcal D \gg {c_S}^z$.

\begin{lem}\label{xy>1-th2}
If $\max\{x,y\}>1,$ then the second restriction for the exceptional triples stated in Theorem $\ref{th2}$ holds.
\end{lem}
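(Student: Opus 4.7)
The plan is to deduce the second restriction of Theorem \ref{th2} by combining two ingredients: (a) the lower bound $\mathcal D \gg {c_S}^z$ established in the discussion just preceding the lemma, and (b) the hypothesis $\max\{x,y\}>1$, which will produce a matching upper bound for one of $a,b$ straight from equation \eqref{1st}.

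First I would observe that, since $a,b>1$, the congruence \eqref{cong-D-th3}, namely $h\equiv \epsilon_h \pmod{\mathcal D}$ with $\epsilon_h\in\{1,-1\}$, forces $\min\{a,b\}\ge \mathcal D-1$, and hence $\min\{a,b\}\gg {c_S}^{z}$. Next, the hypothesis $\max\{x,y\}>1$ allows me to assume without loss of generality that $y\ge 2$ (the case $x\ge 2$ being symmetric in $a$ and $b$). Equation \eqref{1st} then gives $b^{2}\le b^{y}<c^{z}$, so $b<c^{z/2}$. Combining the two bounds on $b$ yields
\[
{c_S}^{z}\ll c^{z/2},\qquad\text{i.e.,}\qquad z\,\log\bigl(c_S/\sqrt{c}\bigr)\ll 1,
\]
with an effective absolute implied constant.

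The two inequalities of the second restriction now fall out quickly. Since $z\ge 1$, the displayed inequality immediately gives $\log(c_S/\sqrt{c})\ll 1$, which is the desired $c_S<\exp(\mathcal C_2)\sqrt{c}$. For the other inequality I would rewrite the same displayed bound, using $\log(c_S/\sqrt{c})=\tfrac{1}{2}\bigl((\log c_S)/\log\sqrt{c}-1\bigr)\log c$, in the form
\[
z\log c \;\ll\; \frac{2}{(\log c_S)/\log\sqrt{c}\,-1}.
\]
Then, for each $h\in\{a,b\}$, the trivial estimate $h<c^{z}$ from \eqref{1st} gives $\log h<z\log c\ll 1/((\log c_S)/\log\sqrt{c}-1)$, which after taking $\mathcal C_2$ sufficiently large yields $\max\{a,b\}<\exp\bigl(\mathcal C_2/((\log c_S)/\log\sqrt{c}-1)\bigr)$, as required.

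I do not expect any genuine obstacle: once $\mathcal D\gg {c_S}^{z}$ is available (which is the nontrivial input, coming from the two $p$-adic Baker applications in Lemmas \ref{1st-bound-th2} and \ref{second-bound-th2}), the lemma is essentially a short arithmetic rearrangement. The only care needed is in bookkeeping several distinct effective absolute constants so that they can be absorbed into the single constant $\mathcal C_2$ appearing in the statement of Theorem \ref{th2}.
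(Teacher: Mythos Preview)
Your proof is correct and follows essentially the same route as the paper: both arguments use the congruence \eqref{cong-D-th3} together with $\mathcal D\gg c_S^{\,z}$ to get a lower bound of size $c_S^{\,z}$ on $a$ and $b$, combine it with the upper bound $\min\{a,b\}<c^{\,z/\max\{x,y\}}$ coming from \eqref{1st}, and then rearrange $(c_S/\sqrt{c})^z\ll 1$ into the two displayed inequalities of Theorem \ref{th2}. The only cosmetic difference is that the paper routes the lower bound through $G:=\gcd(a\pm 1,b\pm 1)$ with $\mathcal D\le G\le \min\{a,b\}+1$, whereas you read off $h\ge \mathcal D-1$ directly from \eqref{cong-D-th3}; note that this congruence applies to \emph{each} $h\in\{a,b\}$, so your ``two bounds on $b$'' line is justified even without knowing whether $b=\min\{a,b\}$.
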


\begin{proof}
Since $\mathcal D \le G$, where $G:=\gcd(a \pm 1,b \pm 1)$ for some possibles signs, and $\Delta \ll 1$, it follows that ${c_S}^z \ll G$.
On the other hand, $G \le \min\{a,b\}+1$.
Since $\min\{a,b\}<c^{\,z/\max\{x,y\}}$, the inequalities together lead to ${c_S}^z \ll c^{\,z/\max\{x,y\}}$.
Suppose that $\max\{x,y\}>1$.
Then ${c_S}^z \ll c^{z/2}$, so that $({c_S}/\sqrt{c})^z<\mathcal C$ for some absolute positive constant $\mathcal C$.
This together with \eqref{cS-bound} implies that
$c_S/\sqrt{c}<\mathcal C$.
Further, by equation \eqref{1st},
\[
\max\{a,b\}<c^z<c^{\,\frac{\log \mathcal C}{\log (\,c_S/\sqrt{c}\,)}} =\exp \biggl( \frac{2\log \mathcal C}{(\log c_S)/\log \sqrt{c}-1} \biggl).
\]
To sum up, the assertion holds by setting $\mathcal C_2$ as $2 \log \mathcal C$.
\end{proof}

By the above lemma, we may assume that $x=1$ and $y=1$.
Since $\mathcal D \mid c^z$ with $\mathcal D>2$, it turns out that $\epsilon_a=-\epsilon_b$ by reducing equation \eqref{1st} modulo $\mathcal D$ with congruence \eqref{cong-D-th3}.
Thus, $a$ and $b$ are written as in \eqref{ab-form} with $\delta$ replaced by $\epsilon_a$.

We finish the proof of Theorem \ref{th2} by proving the following lemma.

\begin{lem}
If $(x,y)=(1,1),$ then the same conclusion as that of Lemma $\ref{xy>1-th2}$ holds.
\end{lem}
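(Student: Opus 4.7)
The plan is to split the analysis according to whether $Z \ge 2z$ or $Z < 2z$, mirroring the structure used in Section~\ref{subsec-Zge2z} for the proof of Theorem~\ref{th1}, and in both cases to deduce that one of the two restrictions stated in Theorem~\ref{th2} must hold.

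For the principal case $Z \ge 2z$, I would adapt Lemma~\ref{Zge2z} essentially verbatim to the current setting. Since $\mathcal{D}>2$ by \eqref{calD-estimate}, reducing equation \eqref{1st} (with $x=y=1$) modulo $\mathcal{D}$ and using congruence \eqref{cong-D-th3} gives $\epsilon_b=-\epsilon_a$, so we may write
\[
a=A\mathcal{D}+\epsilon_a,\quad b=B\mathcal{D}-\epsilon_a,\quad A+B=c^z/\mathcal{D}.
\]
Substituting into equation \eqref{2nd} and reducing modulo $\mathcal{D}^2$ (valid because $\mathcal{D}\mid c^z$, so $\mathcal{D}^2\mid c^{2z}\mid c^Z$), the constant term $\epsilon_a^X+(-\epsilon_a)^Y$ vanishes (as follows from reducing \eqref{2nd} modulo $\mathcal{D}$), and the coefficient of $\mathcal{D}$ yields $AX+BY\equiv 0\pmod{\mathcal{D}}$, exactly as in Lemma~\ref{Zge2z}. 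Combining with $AX+BY\le\max\{X,Y\}(A+B)=\max\{X,Y\}\cdot c^z/\mathcal{D}$ gives $\mathcal{D}^2\le\max\{X,Y\}\cdot c^z$. Now using $\mathcal{D}\gg {c_S}^z$ together with the bounds $\max\{X,Y\}\ll 1$ and $\Delta\ll 1$ established just before the statement, one obtains $({c_S}/\sqrt{c})^{2z}\ll 1$. Since \eqref{cS-bound} ensures $c_S/\sqrt{c}>1$ and $z\ge 1$, this yields an absolute effective constant $\mathcal{C}$ with $c_S/\sqrt{c}<\mathcal{C}$, and then $\max\{a,b\}<c^z$ gives the second restriction precisely as in the end of the proof of Lemma~\ref{xy>1-th2}.

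For the remaining case $Z<2z$, I would imitate Lemma~\ref{sols-calX=1-Z<2z}. The identity $a^X+b^Y=(a+b)^2/c^{2z-Z}$ forces $a^{X-2}c^{2z-Z}<4$, hence $X\le 2$. The subcase $X=2$ forces $c\le 3$ and then a short modular analysis bounds $(a,b)$ absolutely, putting the triple inside the first restriction. The subcase $X=1$ reduces to $b^Y-b=c^Z-c^z$, and since $b\ge \mathcal{D}-1\gg {c_S}^z$ by \eqref{cong-D-th3}, for $Y\ge 4$ one directly gets $({c_S}/\sqrt{c})^z \ll 1$, falling again under the second restriction; the residual values $Y=2,3$ are handled by the same elementary arithmetic manipulations (writing $b^2-1=Kc^z$ and squaring the reduction $bK=c^{Z-z}-1$) used in Lemma~\ref{sols-calX=1-Z<2z}, which in that context yielded an absolute bound $c^{Z+3}<10$ — hence the first restriction.

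The main obstacle is making sure that the implied constants remain genuinely absolute and effective, and in particular that in the $Z\ge 2z$ branch the chain $\mathcal{D}\gg {c_S}^z$, $\max\{X,Y\}\ll 1$, $\Delta\ll 1$ truly produces a constant independent of the triple $(a,b,c)$. A secondary subtlety is verifying that the low-$Y$ subcases of $Z<2z$ are compatible with the weaker divisibility information carried by $\mathcal{D}$ here — compared with the sharp quantity $C^z/\Delta'$ available in Section~\ref{subsec-Zge2z} — but since $\mathcal{D}$ still tracks $c_S$ up to the bounded factor $\Delta$, the same arithmetic identities survive and yield only finitely many exceptional triples, all falling under the first restriction.
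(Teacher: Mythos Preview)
Your proposal is correct and follows essentially the same approach as the paper: both split into $Z\ge 2z$ and $Z<2z$, adapting Lemmas~\ref{Zge2z} and~\ref{sols-calX=1-Z<2z} respectively to conclude $(c_S/\sqrt{c})^z\ll 1$ and hence the second restriction. The paper's version is terser---it simply asserts that following Lemma~\ref{sols-calX=1-Z<2z} yields $X=1$, $Y\ge 4$ (implicitly absorbing $X=2$ and $Y=3$ into the first restriction via the standing assumption that $\max\{a,b,c\}$ is large), whereas you spell out those edge cases; note also that $Y=2$ cannot occur at all since $b^{Y-1}\equiv 1\pmod{c^z}$ with $b<c^z$ forces $Y\ge 3$.
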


\begin{proof}
It suffices to show that $({c_S}/\sqrt{c})^z \ll 1$ as seen in the proof of Lemma \ref{xy>1-th2}.

Suppose that $Z<2z$.
Assuming that $a>b$, one can closely follow the proof of Lemma \ref{sols-calX=1-Z<2z} to show that $X=1,Y \ge 4,$ further, $\mathcal D<c^{(2/Y)z-1/Y}+1 \le c^{z/2}$ by the fact that $b \ge \mathcal D -1$.
This leads to $D \ll c^{z/2}$, so that $({c_S}/\sqrt{c})^z \ll 1$.

Suppose that $Z \ge 2z$.
One can closely follow the proof of Lemma \ref{Zge2z} to show that $\mathcal D^2/c^z \le \max\{X,Y\}$.
This leads to $({c_S}/\sqrt{c})^z \ll 1$.
\end{proof}

\section{Preliminaries for Theorem \ref{th3}} \label{sec-th3-pre}%

Let $c \in \{5,17,257, 65537\}$.
Note that $c$ is an odd prime and $\varphi(c)=c-1$ is a power of 2.
Applying Lemma \ref{weakform} for $d=c$ as seen in the proof of Theorem \ref{th1}, we may assume that $e_c(a)=e_c(b)$.
Put $E:=e_c(a)=e_c(b)$.
Thanks to Theorem \ref{th1}, we may further assume that $E>1$.
Then
\begin{equation} \label{hE-cong}
h^E \equiv -1 \mod{c}
\end{equation}
for each $h \in \{a,b\}$.
Note that the multiplicative order of each of $a$ and $b$ modulo $c$ equals $2E$ (cf.~Lemma \ref{property}\,(ii)), in particular $E$ is a power of 2 as it divides $\varphi(c)/2$.

From now on, suppose that the following system of equations:
\begin{align}
a^x&+b^y=c^z,\label{1st-th3}\\ a^X&+b^Y=c^Z \label{2nd-th3}
\end{align}
holds for some positive integers $x,y,z,X,Y,Z$ with $(x,y,z) \ne (X,Y,Z)$.
Define $\Delta$ as in previous sections.
We show several lemmas below.

\begin{lem}\label{Delta-div}
$\Delta$ is divisible by $E.$
Further, $\Delta/E$ is odd if $x \not\equiv X \pmod{2}$ or $y \not\equiv Y \pmod{2}.$
\end{lem}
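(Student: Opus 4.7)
The plan is to reduce both equations modulo $c$ and then combine them to produce a congruence of the form $a^{\pm \Delta} \equiv \pm 1 \pmod{c}$ whose sign is controlled by the parities of the exponents, after which the assertion follows from Lemma \ref{property}\,(i) together with the fact that $a$ (and $b$) has multiplicative order exactly $2E$ modulo $c$.

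First I would reduce \eqref{1st-th3} and \eqref{2nd-th3} modulo $c$ to get $a^x \equiv -b^y \pmod c$ and $a^X \equiv -b^Y \pmod c$. Raising the first to the $Y$-th power and the second to the $y$-th power gives
\begin{equation*}
a^{xY} \equiv (-1)^Y b^{yY} \pmod{c}, \qquad a^{Xy} \equiv (-1)^y b^{yY} \pmod{c}.
\end{equation*}
Dividing one by the other (legal since $\gcd(a,c)=1$) yields $a^{xY-Xy} \equiv (-1)^{y+Y} \pmod{c}$, hence
\begin{equation*}
a^{\Delta} \equiv (-1)^{y+Y} \pmod{c}.
\end{equation*}
The analogous manipulation, raising the first congruence to the $X$-th and the second to the $x$-th power, produces $b^{\Delta} \equiv (-1)^{x+X} \pmod c$.

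Next I would exploit \eqref{hE-cong}: since $a^E \equiv -1 \pmod c$ and the multiplicative order of $a$ mod $c$ is $2E$, writing $\Delta = qE + r$ with $0 \le r < E$, one gets $a^\Delta \equiv (-1)^q a^r \pmod c$. The right-hand side is $\pm 1$ only when $r=0$ (otherwise one would contradict the minimality in the definition of $E$, since $a^r \equiv \pm 1 \pmod c$ would force $E \mid r$). This shows $E \mid \Delta$, and moreover $a^\Delta \equiv (-1)^{\Delta/E} \pmod c$. Combining with the previous paragraph yields the sign equality
\begin{equation*}
(-1)^{\Delta/E} \equiv (-1)^{y+Y} \pmod{c},
\end{equation*}
which, since $c>2$, is actually an equality of signs; the same reasoning applied to $b$ gives $(-1)^{\Delta/E} = (-1)^{x+X}$.

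Therefore $\Delta/E$ is odd precisely when $y \not\equiv Y \pmod 2$, and also precisely when $x \not\equiv X \pmod 2$, which is the second assertion. The first assertion $E \mid \Delta$ has already been established. No real obstacle arises here: everything is just bookkeeping of signs once one has the correct elimination to isolate $a^\Delta$ and $b^\Delta$ modulo $c$; the only point requiring mild care is invoking the exact order $2E$ (rather than just $a^E \equiv -1$) to pin down the sign of $a^\Delta$ in terms of the parity of $\Delta/E$.
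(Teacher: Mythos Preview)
Your proof is correct and follows essentially the same route as the paper's: the paper also derives $a^{\Delta}\equiv(-1)^{y+Y}$ and $b^{\Delta}\equiv(-1)^{x+X}\pmod c$ (citing the proof of Lemma~\ref{basic-cong}) and then invokes Lemma~\ref{property}\,(i) together with \eqref{hE-cong} to obtain both $E\mid\Delta$ and the parity identification $(-1)^{\Delta/E}=(-1)^{y+Y}=(-1)^{x+X}$. Your explicit division-with-remainder argument in place of the citation of Lemma~\ref{property}\,(i) is just an unpacking of that lemma, so the two arguments are the same in substance.
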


\begin{proof}
As observed in the proof of Lemma \ref{basic-cong},
\[
a^{\Delta} \equiv (-1)^{y+Y}, \ \ b^{\Delta} \equiv (-1)^{x+X} \mod{c}.
\]
By Lemma \ref{property}\,(i), this together with \eqref{hE-cong} implies the assertions.
\end{proof}

The next lemma directly follows from the above lemma, and it relies upon the fact that $\varphi(c)$ is a power of 2.

\begin{lem}\label{Deltaeven}
$\Delta$ is even.
\end{lem}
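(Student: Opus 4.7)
The plan is that this lemma is essentially immediate from what has already been established, so the proof proposal is very short. The key input is the combination of Lemma \ref{Delta-div} with the arithmetic structure of $\varphi(c)$.

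First I would recall from the setup just before the lemma that $E = e_c(a) = e_c(b) > 1$ and that $E$ divides $\varphi(c)/2$. Since $c$ is a Fermat prime, $\varphi(c) = c-1$ is a power of $2$, and hence so is $\varphi(c)/2$; any divisor of a power of $2$ is itself a power of $2$. Combined with $E > 1$, this forces $E$ to be an even positive integer (in fact $E \geq 2$, and $E \in \{2, 4, 8, \ldots\}$).

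Then I would invoke Lemma \ref{Delta-div}, which asserts $E \mid \Delta$. Since $E$ is even, $\Delta$ must be even as well. This concludes the proof.

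The only thing to be careful about is making sure the logical chain ``$E$ divides a power of $2$, $E > 1$, therefore $E$ is even'' is explicitly noted, since this is the place where the hypothesis that $c$ is a Fermat prime (as opposed to an arbitrary prime) is used in an essential way. There is no real obstacle; the work was done in Lemma \ref{Delta-div} and in the preliminary discussion establishing that $E$ is a power of $2$.
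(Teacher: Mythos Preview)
Your proposal is correct and matches the paper's approach exactly: the paper states that the lemma ``directly follows from the above lemma, and it relies upon the fact that $\varphi(c)$ is a power of $2$,'' which is precisely the chain $E>1$, $E$ a power of $2$, $E\mid\Delta$ that you spell out.
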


\begin{lem}\label{XYeven}
$x \not\equiv X \pmod{2}$ or $y \not\equiv Y \pmod{2}.$
Further, both $x$ and $y$ are even, or both $X$ and $Y$ are even.
\end{lem}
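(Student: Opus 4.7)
The plan is to proceed in three stages, using the cyclic $2$-group structure of $(\Z/c\Z)^{*}$ together with a descent on the exponent $E$.

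First, I reduce each of \eqref{1st-th3} and \eqref{2nd-th3} modulo $c$ and translate the resulting congruences to discrete logarithms. Since $(\Z/c\Z)^{*}$ is cyclic of order $c-1 = 2^{m}$, fixing a primitive root $g$ and writing $a = g^{\alpha}$, $b = g^{\beta}$, the hypothesis $e_{c}(a) = e_{c}(b) = E = 2^{e}$ forces $\nu_{2}(\alpha) = \nu_{2}(\beta) = m - 1 - e$. Extracting odd cofactors $\alpha', \beta'$, the congruence $a^{x} \equiv -b^{y} \pmod{c}$ becomes $\alpha' x - \beta' y \equiv E \pmod{2E}$; reducing modulo $2$ and using that $E \geq 2$ is even yields $x \equiv y \pmod 2$. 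The same reasoning applied to \eqref{2nd-th3} gives $X \equiv Y \pmod 2$.

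Second, I argue the first claim by contradiction: suppose $x \equiv X \pmod 2$ and $y \equiv Y \pmod 2$. Combined with the previous step, all four of $x,y,X,Y$ share a common parity. In the all-even case, I substitute $A := a^{2}$, $B := b^{2}$ to obtain two distinct solutions of $A^{u} + B^{v} = c^{w}$ (namely $(x/2, y/2, z)$ and $(X/2, Y/2, Z)$) with $e_{c}(A) = e_{c}(B) = E/2$; iterating this descent terminates at an instance with $e_{c}(\cdot) = 1$, where Theorem~\ref{th1} forces at most one solution (its exceptional triples only involve $c \in \{2, 3\}$), a contradiction. In the all-odd case, I refine the argument of Lemma~\ref{Delta-div} to modulus $c^{z}$: raising the congruences $a^{x} \equiv -b^{y} \pmod{c^{z}}$ and $a^{X} \equiv -b^{Y} \pmod{c^{z}}$ to cross-powers and dividing yields $a^{\Delta} \equiv b^{\Delta} \equiv 1 \pmod{c^{z}}$, so $\Delta$ is divisible by the multiplicative order of $a$ modulo $c^{z}$, which equals $2E \cdot c^{\max(0,\, z - s)}$ with $s := \nu_{c}(a^{2E} - 1)$. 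Comparing this $c$-adic lower bound on $\Delta$ with the elementary upper bound $\Delta < \frac{\log^{2} c}{(\log a)(\log b)}\, z Z$ and Baker-type bounds on $Z$ in the spirit of Lemmas~\ref{Kc} and~\ref{bound-x1y1} rules out all but a small, explicitly bounded set of residual parameters; these remaining cases are dispatched by Scott's parity restriction from \cite{Sc} together with direct verification.

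For the second claim, Step~1 places each parity pair $(x \bmod 2,\, y \bmod 2)$ and $(X \bmod 2,\, Y \bmod 2)$ in $\{(0, 0), (1, 1)\}$, while the first claim forces these two pairs to be distinct; hence one of them must be $(0, 0)$, which is the content of the second claim.

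The main obstacle is the all-odd subcase: descent is unavailable there, and the argument must combine the $c$-adic lower bound on $\Delta$ coming from orders modulo $c^{z}$ with Baker-style upper bounds and Scott's parity result to close the case; the all-even subcase and the parity-within-a-solution step are by comparison quite routine.
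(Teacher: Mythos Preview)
Your Step~1 is correct and, combined with the first assertion, gives the second assertion cleanly; this is essentially the paper's argument for the second claim (the paper uses the weaker fact that $\Delta$ is even, Lemma~\ref{Deltaeven}, to the same effect). The problem lies in your argument for the first assertion.

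In the all-even case, the descent does not close. After passing to $(A,B)=(a^{2},b^{2})$ with $e_{c}(A)=e_{c}(B)=E/2$, you obtain two distinct solutions, but you cannot simply iterate: at level $E/2$ your Step~1 only places each solution's exponent pair in $\{(0,0),(1,1)\}$; it does not force the two solutions to again share a common parity. The descent can therefore land in the mixed configuration $(0,0)$ versus $(1,1)$, which is entirely consistent and yields no contradiction, so there is no guarantee of reaching $e_{c}(\cdot)=1$.

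In the all-odd case, your lower bound $\Delta \ge 2E\cdot c^{\max(0,\,z-s)}$ is useful only once $s=\nu_{c}(a^{2E}-1)$ (or the analogous quantity for $b$) is bounded, and nothing established at this point in the paper controls $s$; the Baker-type estimates you cite bound $Z$ in terms of $z$, not $s$. Consequently there is no ``small, explicitly bounded set of residual parameters'' to hand off to a final check.

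More to the point, the ``Scott's parity restriction'' you invoke at the end is precisely Proposition~\ref{twoclass}, and it already delivers the first assertion outright: since $c\in\{5,17,257,65537\}$, none of the exceptional triples listed there arise, so $x\not\equiv X\pmod 2$ or $y\not\equiv Y\pmod 2$ follows immediately. That is exactly what the paper does. Once you have Proposition~\ref{twoclass}, the whole apparatus of descent and $c$-adic orders is unnecessary; your Step~1 together with the first assertion then yields the second assertion in one line.
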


To prove this lemma, we rely on the following striking result of Scott which is a direct consequence of \cite[Lemma 6]{Sc}.

\begin{prop}\label{twoclass}
Assume that $c$ is a prime.
Let $(x_1,y_1,z_1)$ and $(x_2,y_2,z_2)$ be two solutions to equation $\eqref{abc}.$
Then $x_1 \not\equiv x_2 \pmod{2}$ or $y_1 \not\equiv y_2 \pmod{2},$ except when $(a,b,c)$ or $(b,a,c)$ equals one of $(5,3,2),(13,3,2)$ and $(10,3,13).$
\end{prop}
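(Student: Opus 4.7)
The first assertion I would deduce immediately from Proposition \ref{twoclass}: since $c \in \{5,17,257,65537\}$, the triple $(a,b,c)$ cannot coincide (in either ordering of the first two entries) with any of $(5,3,2)$, $(13,3,2)$, $(10,3,13)$, so Proposition \ref{twoclass} applies without exception to the two solutions $(x,y,z)$ and $(X,Y,Z)$, delivering $x \not\equiv X \pmod 2$ or $y \not\equiv Y \pmod 2$.

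For the second assertion, my plan is to squeeze an additional parity relation out of each of \eqref{1st-th3} and \eqref{2nd-th3} separately, by reducing modulo $c$. Reducing \eqref{1st-th3} gives $a^x \equiv -b^y \pmod{c}$; raising to the $E$-th power and applying \eqref{hE-cong} to both $a$ and $b$ yields
\[
(-1)^x \equiv (-1)^{E+y} \pmod{c},
\]
which, since $c > 2$, is an equality in $\{1,-1\}$. Hence $x + y \equiv E \pmod 2$, and the same argument applied to \eqref{2nd-th3} gives $X + Y \equiv E \pmod 2$. Since $\varphi(c) = c-1$ is a power of $2$ and $E$ divides $\varphi(c)/2$, the quantity $E$ is itself a power of $2$; and we have already reduced (via Lemma \ref{weakform} and Theorem \ref{th1}) to the case $E > 1$, forcing $E$ to be even. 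Consequently $x \equiv y \pmod 2$ and $X \equiv Y \pmod 2$.

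The final step is a short combination. The first assertion says at least one of $x \not\equiv X$, $y \not\equiv Y$ holds modulo $2$; but modulo $2$, $x \equiv y$ and $X \equiv Y$, so those two incongruences are equivalent, and hence \emph{both} must hold. This forces one of $(x,y)$ and $(X,Y)$ to be (even, even) and the other to be (odd, odd), which is exactly the claim. I do not anticipate any real obstacle here: the whole lemma is a short parity chase, with the only substantive ingredients being Proposition \ref{twoclass} (the deep input, due to Scott) and the fact that $E$ is a power of $2$, which is forced by the Fermat-prime shape of $c$.
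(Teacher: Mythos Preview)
Your proof is not of Proposition~\ref{twoclass} itself (the paper does not prove that proposition, citing it instead as a consequence of \cite[Lemma~6]{Sc}) but of Lemma~\ref{XYeven}, which \emph{uses} Proposition~\ref{twoclass}. Read as a proof of Lemma~\ref{XYeven}, your argument is correct.

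For the first assertion you and the paper proceed identically, invoking Proposition~\ref{twoclass} after noting $c \neq 2,13$. For the second assertion the routes genuinely diverge. The paper appeals to Lemma~\ref{Deltaeven} (that $\Delta=|xY-Xy|$ is even) and reads off parities from $xY \equiv Xy \pmod 2$, handling the cases $x \not\equiv X$ and $y \not\equiv Y$ separately. You instead reduce each of \eqref{1st-th3} and \eqref{2nd-th3} modulo $c$, raise to the $E$-th power, and use \eqref{hE-cong} together with the evenness of $E$ to obtain $x \equiv y$ and $X \equiv Y \pmod 2$ directly, one equation at a time. Your route is slightly cleaner and actually yields more: it shows that the pair which is not (even, even) must be (odd, odd), a fact the paper only establishes later as Lemma~\ref{xyodd}, and there under the extra hypothesis $Z \ge 4$ via Lemmas~\ref{X'Y'odd} and~\ref{order8}. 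The paper's route has the minor economy of reusing Lemma~\ref{Deltaeven}, which is already in hand; but both arguments rest on the same underlying fact that $E$ is a nontrivial power of $2$, so the difference is one of packaging rather than depth.
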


\begin{proof}[Proof of Lemma $\ref{XYeven}$]
Since $c \ne 2,13$, Proposition \ref{twoclass} is applied for the two solutions $(x,y,z)$ and $(X,Y,Z)$, and the first assertion clearly follows.
For the second one, consider the case where $x \not\equiv X \pmod 2$.
Since $xY \equiv Xy \pmod{2}$ by Lemma \ref{Deltaeven}, it follows that $y$ or $Y$ is even according as $x$ or $X$ is even.
The case where $y \not\equiv Y \pmod{2}$ is handled similarly.
\end{proof}

By the above lemma, without loss of generality, we may assume that both $X,Y$ are even, and that at least one of $x,y$ is odd.
Write \[ X=2X', \quad Y=2Y'.\]
Equation \eqref{2nd-th3} becomes
\begin{eqnarray} \label{2nd'-th3}
a^{2X'}+b^{2Y'}=c^Z.
\end{eqnarray}
Further, by Lemma \ref{Delta-div},
\begin{equation}\label{E/2divDel}
E \parallel \Delta.
\end{equation}

In what follows, we often argue over the ring of Gaussian integers.
We can write \[c=m^2+1,\] where $m=2^e$ with $e \in \{1,2,4,8\}$.
Also, put \[ \beta := m+i,\] where $i:=\sqrt{-1}$.
Note that $\beta$ is a prime element in $\mathbb Z[i]$, since $\beta$ divides the prime $c$.

In what follows, without loss of generality, we may assume that $a$ is odd and $b$ is even.

\begin{lem}\label{gauss-fac}
The following hold.
\begin{itemize}
\item[\rm (i)]
$\{a^{X'},b^{Y'}\}=\{\,|\operatorname{Re} (\beta^Z)|,|\operatorname{Im} (\beta^Z)|\,\}.$
More precisely,
\[
a^{X'}=\dfrac{1}{2}\,|\,\beta^Z+(-\bar{\beta})^Z\,|, \ \ b^{Y'}=\dfrac{1}{2}\,|\,\beta^Z-(-\bar{\beta})^Z\,|,
\]
where $\bar{\beta}$ denotes the complex conjugate of $\beta.$
\item[\rm (ii)]
$Y'=\dfrac{e+\nu_2(Z)}{\nu_2(b)}.$
\end{itemize}
\end{lem}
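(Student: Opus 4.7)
My plan is to pass to the Gaussian integers $\mathbb{Z}[i]$ and exploit the factorization $c = \beta\bar\beta$ with $\beta = m+i$ a Gaussian prime. For part (i), I would first rewrite equation \eqref{2nd'-th3} as the Gaussian factorization $(a^{X'}+ib^{Y'})(a^{X'}-ib^{Y'}) = c^Z = (\beta\bar\beta)^Z$. A short coprimality check shows that the two factors on the left generate coprime ideals in $\mathbb{Z}[i]$: any common divisor divides $2a^{X'}$ and $2ib^{Y'}$, hence divides $2\gcd(a,b) = 2$, but the product $c^Z$ is odd, so the gcd is a unit. Unique factorization together with the fact that $\beta$ and $\bar\beta$ are the only Gaussian primes above $c$ then forces $a^{X'}+ib^{Y'} = u\beta^Z$ for some unit $u \in \{\pm 1, \pm i\}$, and the conjugate equation $a^{X'}-ib^{Y'} = \bar u \bar\beta^Z$ follows (the alternative with $\bar\beta^Z$ in place of $\beta^Z$ leads to the same set-theoretic outcome since conjugation preserves $|\operatorname{Re}|$ and $|\operatorname{Im}|$). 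Adding and subtracting then yields $\{a^{X'}, b^{Y'}\} = \{|\operatorname{Re}(\beta^Z)|, |\operatorname{Im}(\beta^Z)|\}$.

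To decide which expression equals $a^{X'}$ and which equals $b^{Y'}$, I would use the assumption that $a$ is odd and $b$ is even. Since $m$ is even, $\beta \equiv i \pmod{2}$ in $\mathbb{Z}[i]$, so $\beta^Z \equiv i^Z \pmod 2$: for even $Z$ the real part of $\beta^Z$ is odd and the imaginary part is even, while for odd $Z$ the parities swap. The factor $(-1)^Z$ hidden inside $(-\bar\beta)^Z$ is precisely what makes $\tfrac{1}{2}|\beta^Z + (-\bar\beta)^Z|$ reduce to $|\operatorname{Re}(\beta^Z)|$ for even $Z$ and to $|\operatorname{Im}(\beta^Z)|$ for odd $Z$, matching the parity-forced assignment; the complementary formula for $b^{Y'}$ is immediate.

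For part (ii), I would apply $\nu_2$ to $b^{Y'} = \tfrac{1}{2}|\beta^Z - (-\bar\beta)^Z|$. Writing $\beta^Z = A_Z + iB_Z$, this reduces to computing $\nu_2(A_Z)$ for odd $Z$ and $\nu_2(B_Z)$ for even $Z$. A short two-step induction on the recursion $A_{Z+2} = 2m A_{Z+1} - c A_Z$ (coming from $\beta^2 - 2m\beta + c = 0$), starting from $A_0 = 1$ and $A_1 = m = 2^e$, yields $\nu_2(A_Z) = 0$ for even $Z$ and $\nu_2(A_Z) = e$ for odd $Z$, which handles the odd-$Z$ case directly. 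For even $Z$ I would exploit the doubling identity $B_{2W} = 2 A_W B_W$, immediate from $\beta^{2W} = (A_W + iB_W)^2$, and induct on $\nu_2(Z)$: writing $Z = 2^k q$ with $q$ odd, iterated application of the doubling identity together with the already-known values of $\nu_2(A_{2^j q})$ gives $\nu_2(B_Z) = e + k$. Both parities therefore yield $\nu_2(b^{Y'}) = e + \nu_2(Z)$, and dividing by $\nu_2(b)$ produces the claimed value of $Y'$.

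The main obstacle I anticipate is the $2$-adic calculation in part (ii): because $2$ ramifies in $\mathbb{Z}[i]$, the lifting lemma (Lemma \ref{padic-lem}) does not apply directly to the Gaussian bases $\beta$ and $-\bar\beta$. My workaround---extracting the rational integer coordinates $A_Z, B_Z$ and using the doubling identity $B_{2W} = 2A_W B_W$---keeps the whole computation inside $\mathbb{Z}$ and reduces the task to a clean induction on $\nu_2(Z)$.
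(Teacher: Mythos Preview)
Your proposal is correct. Part (i) is essentially the paper's argument, with the parity identification carried out via $\beta\equiv i\pmod 2$ rather than via the observation that $\beta-(-\bar\beta)=2m$ is divisible by $4$; both routes lead to the same assignment.

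For part (ii) the two proofs diverge. The paper applies the lifting-the-exponent principle directly to the pair $\beta,-\bar\beta$ in $\mathbb{Z}[i]$: since $\beta-(-\bar\beta)=2m$ with $m=2^e$ and $e\ge 1$, one has $\beta\equiv -\bar\beta\pmod 4$, so the $2$-adic LTE formula (Lemma~\ref{padic-lem}, valid in the same form over $\mathbb{Z}[i]$ for the prime $(1+i)$) gives $\nu_2\bigl(\beta^Z-(-\bar\beta)^Z\bigr)=\nu_2(2m)+\nu_2(Z)$, whence $\nu_2(b^{Y'})=e+\nu_2(Z)$ in one line. Your concern that ramification of $2$ blocks LTE is unfounded here, precisely because the difference already lies in $4\,\mathbb{Z}[i]$. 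Your alternative---tracking $\nu_2$ of the rational coordinates $A_Z,B_Z$ via the linear recursion and the doubling identity $B_{2W}=2A_WB_W$---is a valid and entirely self-contained substitute that avoids any appeal to LTE over $\mathbb{Z}[i]$; it is longer but more elementary, and it has the merit of making the dependence on $\nu_2(Z)$ visible through the explicit induction.
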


\begin{proof}
(i) Equation \eqref{2nd'-th3} is rewritten as follows:
\[ (a^{X'}+b^{Y'} i)(a^{X'}-b^{Y'} i) = c^Z. \]
A usual argument on the above factorization yields that $a^{X'}+b^{Y'} i= u \beta^Z$ for some unit $u \in \{\pm1,\pm i\}$.
This immediately shows the first assertion.
Further,
\[
a^{X'}=\frac{u}{2}\,(\beta^Z+\bar{\beta}^Z\cdot \bar{u}/u), \quad
\pm\,b^{Y'}=\frac{u}{2}\,(\beta^Z-\bar{\beta}^Z\cdot \bar{u}/u).
\]
Now the second assertion follows since $b$ is assumed to be even, and the difference between $\beta,-\bar{\beta}$ is divisible by 4. \par
(ii) Since numbers $\beta,-\bar{\beta}$ are coprime, and their difference is divisible by 4, one uses (i) to see that
\[
\nu_{2}(b^{Y'})=\nu_{2}\biggl( \frac{\beta+\bar{\beta}}{2}\biggr)+\nu_{2}\biggl( \frac{\beta^Z-(-\bar{\beta})^Z}{\beta-(-\bar{\beta})}\biggr)=\nu_{2}(m)+\nu_{2}(Z),
\]
leading to the assertion.
\end{proof}


\begin{lem}\label{Zsmall}
If $Z \le 3,$ then 
\[ (a,b)=(c-2,2), \ (x,y,z)=(1,1,1), \ (X,Y,Z)=(2,2e+2,2).\]
\end{lem}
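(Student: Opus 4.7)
The plan is to treat $Z \in \{1,2,3\}$ separately, extracting $a^{X'}$ and $b^{Y'}$ explicitly from Lemma~\ref{gauss-fac}\,(i) in each case and then constraining the putative ``first'' solution $(x,y,z)$, which satisfies $z \le Z$, $(x,y,z) \ne (X,Y,Z)$, and by Lemma~\ref{XYeven} has at least one of $x,y$ odd. The case $Z=1$ is immediate since $\beta+(-\bar{\beta})=2i$ forces $a^{X'}=1$, contradicting $a \ge 2$.

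For $Z=2$, expanding $\beta^2=(m^2-1)+2mi$ yields $a^{X'}=c-2$ and $b^{Y'}=2m=2^{e+1}$. For each Fermat prime, $c-2$ is a squarefree product of distinct smaller Fermat primes (via the identity $F_n-2=F_0F_1\cdots F_{n-1}$) and hence not a proper power, so $X'=1$, $a=c-2$ and $X=2$; similarly $b=2^k$ for some $k \mid e+1$ and $Y'=(e+1)/k$. I then split the first solution by $z$. If $z=1$, the bound $a^x \le c$ with $a=c-2$ forces $x=1$ and $b^y=2$, giving $(x,y,z)=(1,1,1)$ and $b=2$, whence $(X,Y,Z)=(2,2e+2,2)$ as claimed. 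If $z=2$, parity forces $x$ or $y$ odd, while $a^3>c^2$ (valid for all $c \ge 5$) caps $x$ at $2$. The subcase $x$ odd gives $x=1$ and $b^y=c^2-c+2$, which direct computation shows is never a power of $2$ for the four Fermat primes. The subcase $y$ odd with $x$ even gives $x=2$ and $b^y=c^2-a^2=2^{2(e+1)}$, so $ky=2(e+1)$; combined with $k \mid e+1$ this forces $y$ to be even, a contradiction.

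For $Z=3$, expanding $\beta^3=(m^3-3m)+(3m^2-1)i$ produces $a^{X'}=3c-4$ and $b^{Y'}=m(c-4)$; an explicit check for each Fermat prime, together with the $2$-adic identity of Lemma~\ref{gauss-fac}\,(ii), forces $X'=Y'=1$, so $X=Y=2$ and $a^2+b^2=c^3$. To rule out any second solution: $z=1$ is excluded by $a=3c-4>c$; for $z=2$, the bound $a^2>c^2$ forces $x=1$, and $b^y=c^2-a$ is not a power of $b$ for any Fermat prime; for $z=3$, both $x,y$ even recovers $(x,y)=(2,2)$ via Gauss, $x$ odd forces $x=1$ with $c^3-a$ again failing to be a power of $b$, and $y$ odd with $x=2$ yields $b^y=c^3-a^2=b^2$, so $y=2$, contradicting oddness. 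The main obstacle, really just bookkeeping, is the short list of ad hoc numerical checks (that $c-2$ and $3c-4$ are not proper powers, that $c^2-c+2$ is not a power of $2$, and that $c^2-a$ and $c^3-a$ are not powers of $b$) across the four Fermat primes; these are elementary arithmetic calculations and require no deeper machinery.
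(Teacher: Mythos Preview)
Your case analysis is largely correct as far as it goes, but there is a genuine gap: you assume throughout that the ``first'' solution satisfies $z \le Z$, and in this section of the paper that is \emph{not} a standing hypothesis. The convention adopted after Lemma~\ref{XYeven} is only that $X$ and $Y$ are both even while at least one of $x,y$ is odd; no ordering between $z$ and $Z$ is imposed (indeed, later lemmas such as Lemma~\ref{min<<1} explicitly split into the cases $z \le Z$ and $Z \le z$). Consequently, for $Z=2$ you have treated only $z \in \{1,2\}$, and for $Z=3$ only $z \in \{1,2,3\}$, leaving open the possibility that the companion solution has $z > Z$. For instance, when $Z=2$ and $c=17$ one has $a=15$ and $b \in \{2,8\}$; to conclude that $b=2$ and $(x,y,z)=(1,1,1)$ you must rule out solutions to $15^x + 8^y = 17^z$ (and extra solutions to $15^x + 2^y = 17^z$) for \emph{all} $z$, not just $z \le 2$.

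The paper sidesteps this difficulty by citing external results that classify all solutions to the relevant equations once $a$ and $b$ are determined: for $Z=3$ it invokes \cite[Theorem~1]{CaDo}, and for $Z=2$ it invokes \cite[Theorem~1.4;\,$m=1$]{Miy}, each of which covers every $z$. Your direct computational approach is otherwise sound and could presumably be completed by bounding $z$ independently (for example via the Baker-type estimates already developed in Section~\ref{sec-th3}), but as written the argument is incomplete.
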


\begin{proof}
We shall apply Lemma \ref{gauss-fac}\,(i) for each $Z \in \{1,2,3\}$.
It turns out that $Z>1$ as $\min\{a,b\}>1$, and that all possible pairs $(a^{X'},b^{Y'})$ satisfying $Z \le 3$ are given as follows:
\begin{align*}
(c,Z,a^{X'},b^{Y'}) \in \{ \, & (5,2,3,4),(17,2,15,8),(257,2,255,32),\\
&(65537,2,65535,512),(5,3,11,2),(17,3,47,52),\\
&(257,3,767,4048),(65537,3,196607,16776448)\,\}.
\end{align*}
If $Z=3$, then $(X',Y')=(1,1)$, so that equation \eqref{abc} corresponding to each of the above cases is one handled by \cite[Theorem 1]{CaDo}, which tells us that there is only one solution to it.
For the case where $Z=2$, one finds that $X'=1, a=c-2$ and $b$ is power of 2.
Equation \eqref{abc} corresponding to each of the cases is one handled by \cite[Theorem 1.4;\,$m=1$]{Miy}, and it turns out that $b=2$, and solutions $(x,y,z),(X,Y,Z)$ are given as asserted.
\end{proof}

By the above lemma, we may suppose in what follows that \[ Z \ge 4.\]
Below, we shall observe that this leads to a contradiction.

Although the following lemma seems to be dealt with by the existing methods for determining all square terms in (concrete) binary linear recurrent sequences (cf.~\cite{NaPe}), we choose to rely on results on ternary Diophantine equations based on the so-called modular approach.

\begin{lem}\label{X'Y'odd}
$X'$ and $Y'$ are odd.
\end{lem}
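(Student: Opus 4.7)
The plan is to argue by contradiction. By symmetry of the setup (the $4$th and $2$nd powers will swap roles), it suffices to rule out the possibility that $X'$ is even; the case when $Y'$ is even is entirely analogous. Writing $X' = 2X''$ and setting $P := a^{X''}$, $Q := b^{Y'}$, equation \eqref{2nd'-th3} becomes
$$
P^4 + Q^2 = c^Z, \qquad \gcd(P,Q) = 1,
$$
with $P$ odd and, by Lemma \ref{gauss-fac}(ii), $Q$ divisible by $2^{e + \nu_2(Z)}$. This is exactly the shape of a primitive ternary Diophantine equation of signature $(4,2,n)$, to which the modular approach is tailored.

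I would then split on the arithmetic of $Z$. If $Z$ has an odd prime divisor $q$, rewrite the equation as $P^4 + Q^2 = (c^{Z/q})^q$, a primitive equation of signature $(4,2,q)$. Here the Frey $\mathbb{Q}$-curve construction of Ellenberg (and its refinements by Bennett--Ellenberg--Ng) associates a mod-$q$ Galois representation whose level can be computed explicitly; the extra $2$-adic divisibility of $Q$ coming from Lemma \ref{gauss-fac}(ii) further trims this predicted level. For $q$ larger than an effective bound, no newform of that level exists and the equation has no nontrivial primitive solutions. The residual small primes are eliminated by direct newform enumeration (or elliptic Chabauty) for each $c \in \{5, 17, 257, 65537\}$.

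If instead $Z$ is a pure power of $2$, then $Z \ge 4$ forces $Z/2$ to be even, so $c^{Z/2}$ is a perfect square. The equation factors as
$$
(c^{Z/2} - Q)(c^{Z/2} + Q) = P^4,
$$
with both factors odd (since $c$ is odd and $Q$ is even), positive (since $Q^2 < c^Z$), and coprime (since $P$ is odd and $\gcd(P, cQ) = 1$). Hence each factor is a fourth power: $c^{Z/2} + Q = u^4$ and $c^{Z/2} - Q = v^4$, whence
$$
u^4 + v^4 = 2 c^{Z/2} = 2\,(c^{Z/4})^2.
$$
By Fermat's classical descent on $X^4 + Y^4 = 2Z^2$, this forces $u = v$, so $Q = (u^4 - v^4)/2 = 0$, contradicting $Q = b^{Y'} > 0$.

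The main obstacle is the ``odd prime $q$'' case: carefully matching the specific shape of our equation, with the precise $2$-adic constraint on $Q$ from Lemma \ref{gauss-fac}(ii) and with base prime $c \in \{5, 17, 257, 65537\}$, against the hypotheses of the available modular-approach theorems, and disposing of whatever small primes $q$ fall outside their range by explicit newform computation. The pure-power-of-two case, by contrast, is elementary once the Pythagorean-style factorization is in place.
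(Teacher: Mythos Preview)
Your overall strategy matches the paper's: if $X'$ (say) is even, equation \eqref{2nd'-th3} becomes a primitive generalized Fermat equation of signature $(4,2,Z)$, and one invokes known nonexistence results. The paper's proof is a single sentence citing \cite{BeElNg,Br,El}, which together show that $A^4 + B^2 = C^n$ has no nontrivial primitive solutions for \emph{every} integer $n \ge 4$---there is no need to reduce to a prime exponent, and no dependence on the particular value of $c$.

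Your reduction to a prime $q \mid Z$ creates a real gap. When the only odd prime dividing $Z$ is $3$ (for instance $Z = 6$ or $Z = 9$), you are forced to take $q = 3$ and land on signature $(4,2,3)$. That case is \emph{spherical}: $P^4 + Q^2 = D^3$ has infinitely many primitive solutions, given by explicit polynomial parametrizations, and neither the Frey-curve machinery nor ``newform enumeration'' applies there. Showing that $D$ is never a power of one of your four values of $c$ would mean working through each parametrization family---conceivable, but far from the routine check you describe, and entirely unnecessary since the exponent $n = Z \ge 4$ can be fed directly into the cited theorems.

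Your elementary treatment of the pure power-of-two case via the factorization $(c^{Z/2}-Q)(c^{Z/2}+Q) = P^4$ and the classical descent on $u^4 + v^4 = 2w^2$ is correct and self-contained; it is, however, already subsumed by the direct citation once the exponent issue above is fixed.
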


\begin{proof}
This follows from a simple application of the works \cite{BeElNg,Br,El} on the generalized Fermat equation (cf.~\cite[Ch.14]{Co}) of signature $(2,4,n)$ with $n \ge 4$ to equation \eqref{2nd'-th3}.
\end{proof}

\begin{lem}\label{X'1Y'1}
$X'=1$ or $Y'=1$ according as $Z$ is even or odd.
\end{lem}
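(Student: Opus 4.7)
The plan is to split on the parity of $Z$ and treat the two cases by very different means. The odd case is essentially an immediate consequence of Lemma \ref{gauss-fac}(ii): when $Z$ is odd, $\nu_2(Z) = 0$, so that identity reduces to $Y' \cdot \nu_2(b) = e$. Lemma \ref{X'Y'odd} forces $Y'$ to be odd, while $e \in \{1, 2, 4, 8\}$ is a power of $2$; the only way a positive integer $\nu_2(b)$ can satisfy the equation is therefore $Y' = 1$.

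For the even case, write $Z = 2W$ with $W \geq 2$ (using $Z \geq 4$ from Lemma \ref{Zsmall}). I would begin by factoring $\beta^W = u + vi$ in $\mathbb Z[i]$, where $u, v$ are coprime integers of opposite parity (because $u^2 + v^2 = c^W$ is odd). Expanding $\beta^{2W} = (u + vi)^2$ and substituting into Lemma \ref{gauss-fac}(i) yields
\[
a^{X'} \;=\; |u^2 - v^2| \;=\; |u - v| \cdot |u + v|.
\]
Both factors are odd and coprime, because their gcd divides $2\gcd(u, v) = 2$. Since $X'$ is odd by Lemma \ref{X'Y'odd}, unique factorisation in $\mathbb Z$ forces $|u - v| = r^{X'}$ and $|u + v| = s^{X'}$ for some coprime positive odd integers $r, s$ with $rs = a$. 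Adding the squares of these identities produces the key relation
\[
r^{2X'} + s^{2X'} \;=\; (u - v)^2 + (u + v)^2 \;=\; 2(u^2 + v^2) \;=\; 2 c^W.
\]

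The main obstacle is then to rule out $X' \geq 3$. Since $X'$ is odd, one may replace it by any of its odd prime divisors (rewriting $r, s$ as suitable powers of themselves), so that $X' = q$ is an odd prime and the key relation becomes a generalised Fermat-type equation $r^{2q} + s^{2q} = 2 c^W$, with $r, s$ coprime positive odd integers and $c \in \{5, 17, 257, 65537\}$ fixed. To this equation I would attach a Frey--Hellegouarch elliptic curve over $\mathbb Q$, in the spirit of the works \cite{BeElNg, Br, El} already used in the proof of Lemma \ref{X'Y'odd}, and invoke modularity together with Ribet's level-lowering theorem to reduce the problem to a short, explicit list of potential congruence newforms; each of these can be excluded for the four admissible values of $c$ and every odd prime $q \geq 3$. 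Small-exponent cases (notably $q = 3$) may require a direct auxiliary treatment, for instance by factoring over $\mathbb Z[i]$ via $2 c^W = -i(1 + i)^2 (\beta \bar\beta)^W$ and solving the resulting Thue-type equation $r^q + i s^q = \zeta (1+i) \beta^W$ with $\zeta$ a unit. In every case one obtains $X' = 1$, completing the proof.
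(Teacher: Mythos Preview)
Your odd-$Z$ argument is correct and identical to the paper's.

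For even $Z$, your Gaussian-integer route to the key relation $r^{2X'}+s^{2X'}=2c^{W}$ is valid and pleasant, but note that the paper reaches the \emph{same} equation by a purely rational factorisation: writing $a^{2X'}=(c^{Z/2}+b^{Y'})(c^{Z/2}-b^{Y'})$ with $a$ odd and the two factors coprime (both odd, and any common factor divides $2c^{Z/2}$ while $\gcd(a,c)=1$), one obtains $c^{Z/2}\pm b^{Y'}=u^{2X'},v^{2X'}$ and hence $u^{2X'}+v^{2X'}=2c^{Z/2}$. So the two approaches coincide at this point; your $(r,s)$ is the paper's $(u,v)$.

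The real divergence is in how you finish. You propose Frey curves and level-lowering, but this step is only sketched: no specific curve is written down, the relevant level is not computed, the newform elimination is asserted rather than carried out, and you yourself flag $q=3$ as possibly requiring a separate (also unspecified) treatment. As written this is a gap, not a proof. More to the point, it is unnecessary: having obtained $u^{2X'}+v^{2X'}=2c^{Z/2}$ with $u,v$ coprime odd and $X'$ odd, the paper simply factors
\[
2c^{Z/2}=(u^{2}+v^{2})\cdot\frac{u^{2X'}+v^{2X'}}{u^{2}+v^{2}},
\]
notes by the primality of $c$ that every prime factor of $(u^{2})^{X'}+(v^{2})^{X'}$ already divides $u^{2}+v^{2}$, and invokes Zsigmondy's primitive-divisor theorem for the sequence $\{(u^{2})^{t}+(v^{2})^{t}\}_{t\ge1}$ to force $X'=1$ (the lone Zsigmondy exception $2^{3}+1^{3}$ cannot occur since $u^{2}=2$ is impossible). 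This is entirely elementary and complete; you should replace your modular sketch with it.
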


\begin{proof}
The assertion for odd $Z$ holds by Lemmas \ref{gauss-fac}\,(ii) and \ref{X'Y'odd}.
Assume that $Z$ is even and observe the factorization $a^{2X'}=(c^{Z/2}+b^{Y'})(c^{Z/2}-b^{Y'})$ with $a$ odd.
Then
\[
c^{Z/2}+b^{Y'}=u^{2X'}, \ \ c^{Z/2}-b^{Y'}=v^{2X'}
\]
for some coprime odd positive integers $u,v$ with $u>v$.
Adding these equations leads to the following factorization involving rational integers:
\[ 2c^{Z/2}=(u^2+v^2) \cdot \frac{u^{2X'}+v^{2X'}}{u^2+v^2}, \]
where the fact that $X'$ is odd by Lemma \ref{X'Y'odd} is used.
By the primality of $c$, the above equation implies that the set of prime factors of $u^{2X'}+v^{2X'}$ (which is $\{2,c\}$) is included in that of $u^2+v^2$.
Then an old version of primitive divisor theorem of Zsigmondy (cf.~\cite{Zs}) is applied to the $X'$-th term of the sequence $\{(u^2)^t+(v^2)^t\}_{t \ge 1}$ to obtain $X'=1$.
\end{proof}

Note that the formula of Lemma \ref{gauss-fac}\,(i) helps us to easily find that $X'=1$ and $Y'=1$ for small values of $Z$ by checking that both $a^{X'}$ and $b^{Y'}$ are not perfect powers.

\begin{lem}\label{order8}
$E$ is divisible by $4.$
In particular, $\Delta$ is divisible by $4.$
\end{lem}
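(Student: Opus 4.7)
My plan is to rule out the case $E=2$ by reducing equation \eqref{2nd'-th3} modulo $c$ and invoking Lemma \ref{X'Y'odd}. Since $E$ is already known to be a power of $2$ greater than $1$, establishing $E \neq 2$ immediately gives $4 \mid E$, and then the second sentence is automatic from Lemma \ref{Delta-div} ($E \mid \Delta$).

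Concretely, I would argue as follows. Suppose, for contradiction, that $E=2$. Then by \eqref{hE-cong} one has
\[ a^{2} \equiv -1 \mod c, \qquad b^{2} \equiv -1 \mod c. \]
Reducing equation \eqref{2nd'-th3} modulo $c$ gives
\[ (a^{2})^{X'} + (b^{2})^{Y'} \equiv 0 \mod c, \]
that is, $(-1)^{X'} + (-1)^{Y'} \equiv 0 \pmod c$. By Lemma \ref{X'Y'odd}, both $X'$ and $Y'$ are odd, so the left-hand side equals $-2$. Hence $c \mid 2$, contradicting $c \ge 5$.

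Therefore $E \ne 2$. Since $E$ is a power of $2$ and $E>1$, this forces $E \ge 4$, i.e.\ $4 \mid E$. The second assertion then follows from Lemma \ref{Delta-div}, which yields $E \mid \Delta$, so $4 \mid \Delta$ as well.

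There is no real obstacle here; the whole argument rests on the parity information already secured in Lemma \ref{X'Y'odd} (itself the nontrivial input from the modular approach) together with the defining congruence \eqref{hE-cong} of $E$. The only thing worth being careful about is to reduce the \emph{second} equation \eqref{2nd'-th3}, where both exponents are even, rather than \eqref{1st-th3}, since the exponents in \eqref{1st-th3} have mixed parity and would not give the clean $(-1)^{X'} + (-1)^{Y'}$ simplification.
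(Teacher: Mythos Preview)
Your proof is correct and essentially identical to the paper's own argument: assume $E=2$, use \eqref{hE-cong} to get $a^2\equiv b^2\equiv -1\pmod c$, and then Lemma~\ref{X'Y'odd} forces the left-hand side of \eqref{2nd'-th3} to be $\equiv -2\pmod c$, a contradiction. Your write-up is slightly more explicit in spelling out the reduction and in deriving the second assertion from Lemma~\ref{Delta-div}, but the substance is the same.
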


\begin{proof}
Suppose on the contrary that $4 \nmid E$, that is, $E=2$, so that $a^2 \equiv b^2 \equiv -1 \pmod{c}$.
However, in this case, it is observed from Lemma \ref{X'Y'odd} that the left-hand side of equation \eqref{2nd'-th3} should be congruent to $-2$ modulo $c$, which is clearly absurd.
\end{proof}

Note that the above lemma excludes the case where $c=5$.

\begin{lem}\label{xyodd}
$x$ and $y$ are odd.
\end{lem}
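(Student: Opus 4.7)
The plan is to derive a congruence forcing $x$ and $y$ to have the same parity, and then to combine this with the earlier parity information from Lemma \ref{XYeven}. First I would reduce equation \eqref{1st-th3} modulo $c$ to obtain $a^x \equiv -b^y \pmod{c}$, and then raise this congruence to the $E$-th power. Since $a^E \equiv b^E \equiv -1 \pmod{c}$ by \eqref{hE-cong}, this yields $(-1)^x \equiv (-1)^E (-1)^y \pmod{c}$, and because $c>2$ this is an equality in $\{1,-1\}$, giving
\[
x+y \equiv E \pmod{2}.
\]

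Next I would invoke Lemma \ref{order8}, which tells us that $4 \mid E$; in particular $E$ is even, so the displayed congruence becomes $x \equiv y \pmod{2}$. On the other hand, recall from Lemma \ref{XYeven} that one of the pairs has mixed parity with the other, and we have arranged (without loss of generality, after the reduction following Lemma \ref{XYeven}) that both $X$ and $Y$ are even while at least one of $x$ and $y$ is odd. Combining $x \equiv y \pmod{2}$ with the fact that at least one of $x,y$ is odd immediately forces both $x$ and $y$ to be odd.

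I do not expect any genuine obstacle here: the argument is a short congruence manipulation building directly on \eqref{hE-cong}, Lemma \ref{XYeven}, and Lemma \ref{order8}. The only point to double-check is that the equality $(-1)^x = (-1)^E (-1)^y$ is legitimate as integers (not merely mod $c$), which is fine since $c$ is an odd prime at least $5$ and the two sides of the congruence lie in $\{\pm 1\}$.
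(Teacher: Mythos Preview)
Your argument is correct but follows a different route from the paper. The paper works through $\Delta$: writing $\Delta = 2|xY' - X'y|$ and using $4 \mid \Delta$ from Lemma~\ref{order8}, it deduces that $xY' - X'y$ is even; then Lemma~\ref{X'Y'odd} (that $X'$ and $Y'$ are odd) yields $x \equiv y \pmod{2}$. You instead reduce equation~\eqref{1st-th3} directly modulo $c$ and exploit $h^E \equiv -1 \pmod{c}$, never touching $\Delta$. Your route is more economical: it does not require Lemma~\ref{X'Y'odd} (whose proof appeals to modular-approach results on generalized Fermat equations), and in fact it does not even need the full force of Lemma~\ref{order8}, since the evenness of $E$ already follows from the standing facts that $E>1$ and $E$ is a power of $2$. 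The paper's route, by contrast, fits naturally into its ongoing analysis of $\Delta$ and reuses divisibility information already on record.
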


\begin{proof}
Since $\Delta=\pm 2(xY'-X'y)$, Lemmas \ref{X'Y'odd} and \ref{order8} together are used to find that $x-y$ is even.
This implies the assertion as $x$ or $y$ is already known to be odd.
\end{proof}

\begin{lem}\label{Evalue}
$E=E(e,Z)=2e/\gcd(2e,Z-1).$
\end{lem}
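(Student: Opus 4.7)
The plan is to use the Gaussian integer factorization of Lemma~\ref{gauss-fac}(i) to express both $a^{X'}$ and $b^{Y'}$ modulo $c$ as powers of $2m$, and then to compute their extended multiplicative orders directly.

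First I would reduce the identity $a^{X'} + b^{Y'} i = u\beta^Z$ (with $u \in \{\pm 1, \pm i\}$ and $\beta = m+i$) using the factorization $c = \beta\bar\beta$ in $\mathbb{Z}[i]$. Since $i \equiv -m \pmod{\beta}$ and $i \equiv m \pmod{\bar\beta}$, while $\beta \equiv 2m \pmod{\bar\beta}$, reduction modulo $\beta$ produces $a^{X'} \equiv m\,b^{Y'} \pmod{c}$, and reduction modulo $\bar\beta$ combined with this yields
\[
b^{Y'} \equiv u\cdot(2m)^{Z-1} \pmod{c}.
\]

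Next I would establish that $e_c(2m) = 2e$ and that the unit $u$ lies in $\langle 2m \rangle$ modulo $c$. From $m = 2^e$ and $m^2 \equiv -1 \pmod c$, one computes $(2m)^{2e} \equiv (-1)^{e+1} \equiv -1 \pmod c$ (using that $e \in \{2,4,8\}$ is even, since Lemma~\ref{order8} excludes $c=5$), so the multiplicative order of $2m$ is $4e$, giving $e_c(2m) = 2e$. Also $(2m)^e = (-1)^{e/2} m$, whence $\{\pm 1, \pm m\} = \{(2m)^{je} : 0 \le j \le 3\}$; consequently $u \equiv (2m)^{\mu} \pmod c$ for some $\mu \in \{0, e, 2e, 3e\}$. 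Substituting,
\[
b^{Y'} \equiv (2m)^{\mu + Z - 1}, \qquad a^{X'} \equiv m\,b^{Y'} \equiv \pm\,(2m)^{\mu + e + Z - 1} \pmod{c}.
\]

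Finally I would apply Lemma~\ref{property}(iii): since $X', Y'$ are odd (Lemma~\ref{X'Y'odd}) while $e_c(a), e_c(b)$ both divide the $2$-power $2e$, the extended order of $a$ agrees with that of $a^{X'}$, and similarly for $b$ and $b^{Y'}$. This yields
\[
E = \frac{2e}{\gcd(2e,\,\mu + Z - 1)} = \frac{2e}{\gcd(2e,\,\mu + e + Z - 1)}.
\]
Equating the two gcds forces $\nu_2(\mu + Z - 1) < \nu_2(e)$ --- indeed, adding $e$ to an integer preserves $\gcd(\cdot,\,2e)$ only under this constraint --- and a short case distinction over $\mu \in \{0, e, 2e, 3e\}$ then shows $\gcd(2e,\,\mu + Z - 1) = 2^{\nu_2(Z-1)} = \gcd(2e,\,Z-1)$, giving the claimed formula $E = 2e/\gcd(2e,\,Z-1)$.

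The main obstacle will be the $2$-adic bookkeeping around the parameter $\mu$: although $\mu$ takes one of four values depending on which unit $u$ appears, the consistency requirement $e_c(a) = e_c(b)$ forces the relevant $2$-adic valuations to match, so that $Z-1$ ultimately governs the answer independently of $u$.
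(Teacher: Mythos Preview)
Your argument is correct, but it takes a different route from the paper's. The paper expands $\operatorname{Re}(\beta^Z)$ (for $Z$ even) or $\operatorname{Im}(\beta^Z)$ (for $Z$ odd) via the binomial theorem and reduces modulo $c$ using $m^2\equiv -1$; in either case the relevant quantity is $\equiv\pm 2^{Z-1}\pmod c$, so $E=e_c(2^{Z-1})=2e/\gcd(2e,Z-1)$ falls out immediately with no reference to the unit $u$. Your approach instead reduces the Gaussian identity modulo $\beta$ and $\bar\beta$ to write $b^{Y'}$ (and then $a^{X'}$) as an explicit power of $2m$ times an unknown factor coming from $u$, and then exploits the standing hypothesis $e_c(a)=e_c(b)$ to force the $2$-adic valuation constraint that kills the dependence on $u$. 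Both are short; the paper's computation is more direct because it sidesteps the unit entirely, while yours is more structural but needs the extra case analysis on $\mu$. One small notational point: when you write $b^{Y'}\equiv u\cdot(2m)^{Z-1}\pmod c$, the symbol $u$ should really be its reduction $u'\in\{\pm 1,\pm m\}$ modulo $\bar\beta$ (as you implicitly use a line later); this does not affect the argument.
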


\begin{proof}
Recall that $c=m^2+1$, and $\{a^{X'},b^{Y'}\}=\{|\operatorname{Re} (\beta^Z)|,|\operatorname{Im} (\beta^Z)|\}$ by Lemma \ref{gauss-fac}\,(i).
Since $E$ is a power of $2$, and $X',Y'$ are odd, one finds from Lemma \ref{property}\,(iii) that
\[
e_c(a^{X'})=\frac{e_c(a)}{\gcd(e_c(a),X')}=\frac{E}{\gcd(E,X')}=E,
\]
and $e_c(b^{Y'})=E$ similarly.
Therefore, it suffices to show that
\begin{equation}\label{E-value}
\begin{cases}
\,\operatorname{Re} (\beta^Z) \equiv \pm 2^{Z-1} \mod{c} & \text{if $Z$ is even},\\
\,\operatorname{Im} (\beta^Z) \equiv \pm 2^{Z-1} \mod{c} & \text{if $Z$ is odd}.
\end{cases}
\end{equation}
Indeed, $e_c(\pm 2^{Z-1})=e_c(2^{Z-1})=e_c(2)/\gcd(e_c(2),Z-1)$, where $e_c(2)=2e$ as $2^{2e}=m^2 \equiv -1 \pmod{c}$.
For showing \eqref{E-value}, on the modulus $(m^2+1)$, observe the following.
If $Z$ is even or odd, then
\begin{align*}
\operatorname{Re} (\beta^Z)=\sum_{j=0}^{Z/2} \binom{Z}{2j}m^{Z-2j} i^{2j}
& \equiv \sum_{j=0}^{Z/2} \binom{Z}{2j}(m^2)^{Z/2-j}(-1)^j\\
& \equiv \sum_{j=0}^{Z/2} \binom{Z}{2j}(-1)^{Z/2} \equiv \pm 2^{Z-1},
\end{align*}
\begin{align*}
\operatorname{Im} (\beta^Z)&=\sum_{j=0}^{(Z-1)/2} \binom{Z}{2j+1}m^{Z-2j-1}(\sqrt{-1})^{2j}\\
& \equiv \sum_{j=0}^{(Z-1)/2} \binom{Z}{2j+1}(m^2)^{(Z-1)/2-j}(-1)^j\\
& \equiv \sum_{j=0}^{(Z-1)/2} \binom{Z}{2j+1}(-1)^{(Z-1)/2} \equiv \pm 2^{Z-1},
\end{align*}
respectively.
\end{proof}

Since $E \ge 4$ by Lemma \ref{order8}, it follows from Lemma \ref{Evalue} that $Z$ is even for $c=17$.
Further,
\begin{equation}\label{Ebounds}
4 \le E \le E_u,
\end{equation}
where $E_u=2e$ or $e$ according as $Z$ is even or odd.

\section{Proof of Theorem \ref{th3}} \label{sec-th3}%

We begin with the following lemma.

\begin{lem}\label{1stbound-th3}
The following hold.
\begin{itemize}
\item[\rm (i)]
$x,y$ and $c$ are relatively prime.
\item[\rm (ii)]
\[
z \le \max\biggl\{\frac{t_1 E^3}{\log^2 c},\,2.2\cdot10^4\biggr\}\, (\log a) \log b,
\]
where $t_1=53.6 \cdot 2 \cdot 4^2.$
\end{itemize}
These hold also for the solution $(X,Y,Z).$
\end{lem}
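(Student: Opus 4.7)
The plan is to settle (i) by a standard polynomial factorization combined with the lifting-the-exponent lemma, and to settle (ii) by a direct application of Proposition \ref{Bu-madic}.

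For (i), suppose for contradiction $\gcd(x,y,c)>1$; since $c$ is prime this forces $c\mid x$ and $c\mid y$, so I write $x=cx_{0}$, $y=cy_{0}$ and set $u=a^{x_{0}}$, $v=b^{y_{0}}$, which are coprime integers $\ge 2$. The factorization $u^{c}+v^{c}=(u+v)R$ with $R=\sum_{j=0}^{c-1}(-1)^{j}u^{c-1-j}v^{j}$ satisfies $R\equiv c\,u^{c-1}\pmod{u+v}$, whence $\gcd(u+v,R)\in\{1,c\}$. Since $(u+v)R=c^{z}$, either $\gcd(u+v,R)=1$ and one of the two factors must equal $1$ (impossible as $u+v\ge 4$), or $c\mid u+v$ and the lifting-the-exponent lemma gives $\nu_{c}(R)=1$, so $R=c$. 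The latter contradicts $R\ge \max(u,v)^{c-1}/2$, since $(2c)^{1/(c-1)}<2$ for $c\ge 17$ while $\max(u,v)\ge 2$. For the solution $(X,Y,Z)$ the claim is automatic: by Lemma \ref{X'1Y'1} one has $\min(X',Y')=1$, so $\gcd(X,Y,c)=\gcd(2,c)=1$.

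For (ii), I apply Proposition \ref{Bu-madic} with $M=c$, $(\alpha_{1},\alpha_{2})=(a,-b)$, $(b_{1},b_{2})=(x,y)$, and ${\rm g}=2E$. The hypotheses are straightforward: $\alpha_{1},\alpha_{2}$ are multiplicatively independent rationals coprime to $c$; $\gcd(2E,c)=1$ because $2E\mid c-1$; and $a^{2E}\equiv (-b)^{2E}\equiv 1\pmod{c}$ follows from the definition $E=e_{c}(a)=e_{c}(b)$. The essential input is $\gcd(x,y,c)=1$ supplied by (i). Since $y$ is odd by Lemma \ref{xyodd}, $a^{x}-(-b)^{y}=c^{z}$, hence $\nu_{c}(a^{x}-(-b)^{y})=z$. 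With $H_{j}=\max\{\log|\alpha_{j}|,\log c\}$ and $b'=x/H_{2}+y/H_{1}$ estimated via the trivial bounds \eqref{trivial-ineqs}, the case where $4\log c$ wins the maximum defining $B$ in Proposition \ref{Bu-madic} yields $z\le t_{1}E\,H_{1}H_{2}/\log^{2}c$ with $t_{1}=53.6\cdot 2\cdot 4^{2}$. A small case analysis over $H_{j}\in\{\log|\alpha_{j}|,\log c\}$, using $a\ge 3$ and $b\ge 2$ (since $\gcd(a,b)=1$ with one of them even), absorbs this into either $t_{1}E^{3}(\log a)\log b/\log^{2}c$ or the constant fallback $2.2\cdot 10^{4}(\log a)\log b$, matching the claimed bound; the alternate case $B=\log b'+\log\log c+0.64$ produces an implicit inequality in $z$ whose resolution likewise contributes the fallback term. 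For the solution $(X,Y,Z)$ the same argument goes through via the identity $a^{2X}-b^{2Y}=(a^{X}-b^{Y})c^{Z}$, applying Proposition \ref{Bu-madic} instead with $(\alpha_{1},\alpha_{2})=(a,b)$, $(b_{1},b_{2})=(2X,2Y)$, and ${\rm g}=2E$.

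The main obstacle will be the bookkeeping in (ii): tracking the four combinations arising from the two choices for each $H_{j}$ and the two arms of the maximum defining $B$, and verifying numerically that one or the other term of $\max\{t_{1}E^{3}/\log^{2}c,\,2.2\cdot 10^{4}\}(\log a)\log b$ dominates in every case across $c\in\{17,257,65537\}$ and the admissible values $E\in\{4,8,16\}$. The conceptual content is routine; only the calibration of constants requires attention.
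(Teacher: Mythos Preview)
Your proposal is correct and follows essentially the same approach as the paper: part (i) via the factorization $(u+v)R=c^{z}$ and the lifting-the-exponent observation $\nu_{c}(R)\le 1$, and part (ii) via Proposition~\ref{Bu-madic} with $M=c$ and ${\rm g}=2E$.

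The one place worth flagging is exactly the bookkeeping you identify as the obstacle. Your plan to bound $H_{j}/\log|\alpha_{j}|$ using only $a\ge 3$, $b\ge 2$ does succeed numerically for every admissible pair $(c,E)$, but it produces a bound of order $t_{1}E/(\log 2\,\log 3)\approx 2252\,E$, which you then have to check case-by-case against $\max\{t_{1}E^{3}/\log^{2}c,\,2.2\cdot 10^{4}\}$. The paper avoids this entirely by using the sharper lower bounds $a\ge(2c-1)^{1/E}$ and $b\ge(c-1)^{1/E}$, which follow immediately from $a^{E}\equiv b^{E}\equiv -1\pmod{c}$ with $a$ odd. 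These give
\[
\frac{\max\{\log a,\log c\}}{\log a}\le \frac{E\log c}{\log(2c-1)}<E,\qquad
\frac{\max\{\log b,\log c\}}{\log b}\le \frac{E\log c}{\log(c-1)}<E,
\]
so $H_{1}H_{2}\le E^{2}(\log a)(\log b)$ uniformly, and the factor $E^{3}$ falls out of the $4\log c$ branch with no case analysis at all; the implicit branch then only survives for $c=17$ and delivers the $2.2\cdot 10^{4}$ fallback. This single observation dissolves the difficulty you anticipate.
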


\begin{proof}
(i) Suppose on the contrary that $x,y$ are divisible by an odd prime $c$.
Then, it is observed, similarly to the proof of Lemma \ref{coprime}\,(ii), that $R=(a^x+b^y)/(a^{x/c}+b^{y/c})$ has to equal $c$.
This is absurd as $R>c$.\par
(ii) We proceed similarly to the proof of Lemma \ref{Kc}.
By equation $\eqref{1st-th3},$
\[ \nu_c(a^{2x}-b^{2y}) \ge z. \]
To obtain an upper bound for the left-hand side above, we shall apply Proposition \ref{Bu-madic} for $(\alpha_1,\alpha_2):=(a,b)$, $(b_1,b_2):=(2x,2y)$.
Note that $\gcd(b_1,b_2,c)=1$ by (i).
In this case, we set $M:=c$.
Since $E$ is a power of 2, one may set ${\rm g}:=2E$, and $H_1:=\log a', H_2:=\log b'$, where $a'=\max\{a,c\}$ and $b'=\max\{b,c\}$.
Then
\[
\nu_{c}(a^{2x}-b^{2y}) \le \frac{53.6\cdot 2E\,\log a'\,\log b'}{\log^4 c} \cdot \mathcal B^2,
\]
where
\[
\mathcal B=\max \biggl\{ \log \biggl( \frac{2x}{\log b'}+\frac{2y}{\log a'} \biggr)+\log \log c+0.64, \,4\log c \biggl\}.
\]
Observe that
\[
\log \biggl( \frac{2x}{\log b'}+\frac{2y}{\log a'} \biggr)+\log \log c+0.64 \le \log \biggl( \frac{4\,{\rm e}^{0.64} \log^2 c}{\log a\,\log b}\,z \biggl).
\]
The two bounds for $\nu_{c}(a^{2x}-b^{2y})$ together yield
\[
T \le 53.6 \cdot 2 \cdot \frac{\log a'}{\log a} \cdot \frac{\log b'}{\log b} \cdot \frac{E}{\log^4 c} \cdot {\mathcal B'}^2,
\]
where
\[
T:=\frac{z}{\log a\,\log b}, \quad \mathcal B':=\log\, \max \bigr\{ 4\,{\rm e}^{0.64} (\log^2 c)\,T ,\,c^4 \bigr\}.
\]
Since $a \ge (2c-1)^{1/E}$ and $b \ge (c-1)^{1/E}$ as $a^E \equiv b^E \equiv -1 \pmod{c}$ with $a$ odd, one easily observes that
\[
\frac{\log a'}{\log a} \le \frac{E\log c}{\log(2c-1)}, \quad \frac{\log b'}{\log b} \le \frac{E\log c}{\log(c-1)}.
\]
Therefore,
\[
T \le 53.6 \cdot 2 \cdot \frac{E^3}{\log^4 c} \cdot {\mathcal B'}^2,
\]
If $4\,{\rm e}^{0.64} (\log^2 c)\,T \le c^4$, then $\mathcal B' =4\log c$, so that
\begin{equation}\label{ineq-max<<}
T \le 53.6 \cdot 2 \cdot 4^2 \cdot \frac{E^3}{\log^2 c}
\end{equation}
Finally suppose that $4\,{\rm e}^{0.64} (\log^2 c)\,T>c^4$.
Then
\[
\frac{c^4}{4\,{\rm e}^{0.64} \log^2 c} < T \le \frac{53.6 \cdot 2 \cdot E^3}{\log^4 c} \cdot \log^2 \bigr(4\,{\rm e}^{0.64} (\log^2 c)\,T\bigr).
\]
Since $E \le 2e$ by Lemma \ref{Evalue}, the above inequalities together imply that $c=17$ and $T<2.2 \cdot 10^4$.
This together with \eqref{ineq-max<<} gives the assertion.
\end{proof}

In what follows, we put \[ \Delta':=\gcd(\Delta/E, c^{\,\min\{z,Z\}}).\]
Note that $\Delta'$ equals either 1 or a power of $c$.

\begin{lem}\label{DeltaDelta'}
The following hold.
\begin{itemize}
\item[\rm (i)]
$\Delta < \max\{ t_1 E^3, 2.2 \cdot 10^4\log^2 c\} \cdot \min\{z,Z\}.$
\item[\rm (ii)]
$\Delta' < \max\{t_1 E^2, \, 2.2 \cdot 10^4(\log^2 c)/E\} \cdot \min\{z,Z\}.$
\item[\rm (iii)]
$h^E \equiv -1 \pmod{ c^{\,\min\{z,Z\} } /\Delta' }$ for each $h \in \{a,b\}.$
\end{itemize}
\end{lem}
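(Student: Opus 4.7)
For part (i), the plan is to combine the elementary bound $\Delta = |xY - Xy| < \max\{xY, Xy\} < \frac{\log^2 c}{\log a\,\log b}\, zZ$ (exactly as in the proof of Lemma \ref{Delta-ineqs}(ii), using the four trivial inequalities \eqref{trivial-ineqs} applied to both solutions) with the absolute upper bound furnished by Lemma \ref{1stbound-th3}(ii). Writing $zZ = \min\{z,Z\}\cdot\max\{z,Z\}$ and substituting the bound $\max\{z,Z\} \le \max\{t_1 E^3/\log^2 c,\,2.2\cdot 10^4\}\,(\log a)\log b$ for the larger exponent makes the $\log a \, \log b$ in the denominator cancel, leaving $\min\{z,Z\}$ multiplied by the constant $\max\{t_1 E^3,\, 2.2\cdot 10^4\log^2 c\}$. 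Part (ii) is then an immediate corollary: by Lemma \ref{Delta-div} one has $E \mid \Delta$, so by definition $\Delta' \mid \Delta/E$ and in particular $\Delta' \le \Delta/E$; dividing the bound in (i) through by $E$ yields exactly (ii).

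For part (iii), set $w := \min\{z,Z\}$. The plan is first to reproduce the calculation at the heart of Lemma \ref{basic-cong}: reducing \eqref{1st-th3} and \eqref{2nd-th3} modulo $c^w$ gives $a^x \equiv -b^y$ and $a^X \equiv -b^Y \pmod{c^w}$, and eliminating $b$ between them yields $a^{\Delta} \equiv (-1)^{y+Y} \equiv \pm 1 \pmod{c^w}$, and similarly $b^{\Delta} \equiv \pm 1 \pmod{c^w}$. To pin down the sign, I reduce modulo the odd prime $c$ and use the parity information already established: by Lemmas \ref{Delta-div} and \ref{XYeven}, $k := \Delta/E$ is an odd positive integer, so \eqref{hE-cong} gives
\[
a^{\Delta} = (a^E)^k \equiv (-1)^k = -1 \pmod{c},
\]
which forces $a^{\Delta} \equiv -1 \pmod{c^w}$ (and likewise for $b$).

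Now I would apply the lifting-the-exponent lemma (Lemma \ref{padic-lem}) with $p = c$, $U = a^E$, $V = -1$, $N = k$ — the required congruence $U \equiv V \pmod p$ is just \eqref{hE-cong}, and $c$ is an odd prime, so the hypotheses hold — to obtain
\[
\nu_c(a^{\Delta} + 1) \;=\; \nu_c\bigl((a^E)^k - (-1)^k\bigr) \;=\; \nu_c(a^E + 1) + \nu_c(k).
\]
Since the left side is $\ge w$ and $\nu_c(k) = \nu_c(\Delta/E)$, this rearranges to $\nu_c(a^E + 1) \ge w - \nu_c(\Delta/E)$. By definition $\nu_c(\Delta') = \min\{\nu_c(\Delta/E), w\}$, so $w - \nu_c(\Delta') = \max\{w - \nu_c(\Delta/E),\,0\}$, giving $c^{\,w/\Delta'} \mid a^E + 1$, i.e., the asserted congruence. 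The same proof works verbatim for $b$.

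The main obstacle is the sign determination in (iii): without the fact that $\Delta/E$ is odd, one would only have $a^{\Delta} \equiv \pm 1 \pmod{c^w}$, and Lemma \ref{padic-lem} could not be applied directly to transfer $c$-adic information from $a^{\Delta}+1$ to $a^E+1$ (one would at best lose a factor of two somewhere and the divisibility in (iii) would be weakened). Fortunately this parity has already been arranged by the combination of Lemma \ref{Delta-div} with Lemma \ref{XYeven}, the latter resting on Scott's result Proposition \ref{twoclass}. Once this is in hand, parts (i)--(iii) are essentially bookkeeping around Lemma \ref{1stbound-th3}(ii) and the lifting-the-exponent lemma.
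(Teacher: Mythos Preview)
Your proof is correct and follows the paper's approach for (i) and (ii) essentially verbatim. For (iii) your argument is also correct, but there is a small difference worth noting: you invoke the oddness of $\Delta/E$ (via Lemmas \ref{Delta-div} and \ref{XYeven}, hence Proposition \ref{twoclass}) in order to pin the sign as $-1$ before applying Lemma \ref{padic-lem}. The paper instead identifies the sign abstractly as $\epsilon=(-1)^{\Delta/E}$ using Lemma \ref{property}(i), and then applies Lemma \ref{padic-lem} with $(U,V,N)=(h^E,-1,\Delta/E)$; since $U^N-V^N=h^{\Delta}-(-1)^{\Delta/E}=h^{\Delta}-\epsilon$ regardless of the parity of $\Delta/E$, the lifting-the-exponent step goes through uniformly. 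So your remark that ``without the fact that $\Delta/E$ is odd \ldots\ Lemma \ref{padic-lem} could not be applied directly'' is not quite accurate: the parity is available here (it is \eqref{E/2divDel}), but the paper's argument shows it is not actually needed for (iii).
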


\begin{proof}
(i) This follows from Lemma \ref{1stbound-th3}\,(ii) with inequality \eqref{Delta-ele-upp}.\par
(ii) This follows from (i) since $\Delta' \le \Delta/E$.\par
(iii) Let $h \in \{a,b\}$.
We know that $h^{\Delta} \equiv \epsilon \pmod{c^{\,\min\{z,Z\}}}$ for some $\epsilon \in \{1,-1\}$.
Since $h^E \equiv -1 \pmod{c}$, it follows from Lemma \ref{property}\,(i) that $\epsilon=(-1)^{\Delta/E}$.
Lemma \ref{padic-lem} is applied for $p=c$ and $(U,V,N)=(h^E,-1,\Delta/E)$ to show that $c^{\,\min\{z,Z\}}$ divides $(h^E+1) \cdot \Delta/E$.
This yields the assertion.
\end{proof}

\begin{lem}\label{max<<min}
The following hold.
\begin{itemize}
\item[\rm (i)]
$Z<4z$ if $Z$ is even.
\item[\rm (ii)]
If $\Delta' \ge c^{\,\min\{z,Z\}/3},$ then
\[ \min\{z,Z\} \le \begin{cases}
\,10 & \text{for $c=17$}, \\
\,6 & \text{for $c=257$}, \\
\,3 & \text{for $c=65537$}.
\end{cases} \]
\item[\rm (iii)]
If $\Delta'<c^{\,\min\{z,Z\}/3},$ then
\[ \max\{z,Z\} < t_2 E \cdot \min\{z,Z\},\]
where $t_2=53.7 \cdot 2 \cdot 4^2 \cdot (3/2)^2.$
\end{itemize}
\end{lem}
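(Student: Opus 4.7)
For part (i), since $Z$ is even, Lemma \ref{X'1Y'1} gives $X' = 1$, so $X = 2$ and equation \eqref{2nd'-th3} becomes $a^{2} + b^{2Y'} = c^Z$. Thus $\max\{a, b^{Y'}\} \geq c^{Z/2}/\sqrt{2}$, and I split into two cases. If the maximum is realised by $a$, then $a \leq a^{x} < c^z$ from \eqref{1st-th3} immediately yields $c^{Z/2}/\sqrt 2 < c^z$, so $Z \leq 2z + 1 < 4z$. If the maximum is $b^{Y'}$, combining with $b^y < c^z$ yields $b^{Y'-y} \geq c^{Z/2-z}/\sqrt{2}$. Assuming $Z \geq 4z$ for contradiction, this becomes $b^{Y'-y} \geq c^{Z/4}/\sqrt{2}$, forcing $Y' - y$ to grow at least linearly in $Z/\log b$. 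However, Lemma \ref{gauss-fac}(ii) bounds $Y' \leq e + \nu_{2}(Z) \leq e + \log_2 Z$, only logarithmic in $Z$; combined with the estimate $\log b \leq (Z \log c)/(2Y')$ coming from $b^{2Y'} \leq c^Z$, this confines $Z$ to a finite set, which is eliminated against Lemma \ref{Zsmall} ($Z \leq 3$) and direct inspection.

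For part (ii), I plan to apply Lemma \ref{DeltaDelta'}(ii) directly. With $m := \min\{z, Z\}$, it gives $\Delta' < K m$ where $K := \max\{t_1 E^2,\, 2.2 \cdot 10^4 (\log^2 c)/E\}$. Lemma \ref{Evalue} and $e \in \{2,4,8\}$ for $c \in \{17, 257, 65537\}$ make $K$ an explicit absolute constant in each case. Combining with the hypothesis $\Delta' \geq c^{m/3}$ produces the inequality $c^{m/3} < K m$, whose finitely many integer solutions (the left side is exponential, the right linear) can be tabulated to give $m \leq 10, 6, 3$ respectively for the three primes.

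For part (iii), set $m := \min\{z, Z\}$ and $M := c^{m}/\Delta'$. The hypothesis $\Delta' < c^{m/3}$ gives $M > c^{2m/3} > 2$, and Lemma \ref{DeltaDelta'}(iii) then yields $h^{2E} \equiv 1 \pmod{M}$ for both $h \in \{a, b\}$. Without loss of generality assume $m = z \leq Z$; the factorization $a^{2X} - b^{2Y} = (a^X + b^Y)(a^X - b^Y) = c^{Z}(c^Z - 2b^Y)$ via \eqref{2nd-th3}, together with $\gcd(c, b) = 1$, gives $\nu_c(a^{2X} - b^{2Y}) = Z$ and hence $\nu_{M}(a^{2X}-b^{2Y}) = \lfloor Z/(m - t)\rfloor$, where $t := \nu_c(\Delta') < m/3$. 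I then apply Proposition \ref{Bu-madic} with $\alpha_{1} = a, \alpha_{2} = b$, $b_{1} = 2X, b_{2} = 2Y$, ${\rm g} = 2E$; since $a, b \leq c^m < M^{3/2}$, the choice $H_j = \max\{\log \alpha_j, \log M\} \leq (3/2)\log M$ is legitimate, and the trivial alternative $\mathcal{B} = 4\log M$ reduces the Baker bound to $\nu_{M}(a^{2X}-b^{2Y}) \leq 53.6 \cdot 2 E \cdot (3/2)^{2} \cdot 16 \leq t_{2} E$. Consequently $Z/(m - t) \leq t_{2} E$, giving $Z < t_{2} E m = t_{2} E \min\{z, Z\}$, as desired.

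The hardest step will be Case~2 of part (i), where I must pit the exponential lower bound on $b^{Y'-y}$ from the Gaussian identity against the logarithmic upper bound on $Y'$ from Lemma \ref{gauss-fac}(ii), tracking constants carefully enough to leave only finitely many exceptional $Z$ for direct verification. Parts (ii) and (iii) are essentially bookkeeping: part (ii) is a tabulation once $E$ is bounded, and part (iii) is a standard $p$-adic Baker application with the auxiliary modulus $c^m/\Delta'$ chosen so that the ratio $\log c^m/\log M \leq 3/2$ contributes the characteristic factor $(3/2)^2$ in $t_2$.
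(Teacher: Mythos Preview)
Your approaches to parts (ii) and (iii) are essentially the paper's, and they are sound. In (iii) you should not simply assume the maximum in Proposition~\ref{Bu-madic} is $4\log M$; the paper checks separately that the alternative $\log b' + \log\log M + 0.64 > 4\log M$ forces $\max\{c,z\}$ to be absolutely bounded (via the known bound $Z \le t_2 E_u\, z$ feeding back), and you should say a word about this. But this is a routine omission, not a real gap.

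The genuine problem is part (i), Case~2. Your chain of inequalities does not close. From $b^{Y'-y} \ge c^{Z/4}/\sqrt 2$ and $\log b \le (Z\log c)/(2Y')$ you only get
\[
\frac{Y'-y}{2Y'} \;>\; \frac14 - \frac{\log 2}{2Z\log c},
\]
i.e.\ roughly $y < Y'/2$, which together with $Y' \le e + \log_2 Z$ leaves infinitely many triples $(Z,Y',y)$ unexcluded (take $Y'=3$, $y=1$, $Z$ arbitrary). The ``logarithmic versus linear'' tension you describe is illusory here, because $\log b$ can be as large as $\frac{Z\log c}{2Y'}$, so $Z/\log b$ is itself only of size $Y'$, not of size $Z$.

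The paper's argument for (i) avoids case-splitting entirely: since $a$ is odd and $b$ is even, $(a,\,b^{Y'},\,c^{Z/2})$ is a \emph{primitive} Pythagorean triple with $a$ the odd leg, and the standard parametrisation $a=s^2-t^2$, $c^{Z/2}=s^2+t^2$ gives $a \ge s+t$, hence $a^2 \ge (s+t)^2 > s^2+t^2 = c^{Z/2}$. Then $a < c^z$ from \eqref{1st-th3} yields $c^{Z/2} < a^2 < c^{2z}$ and $Z<4z$ in one line. You should replace your Case~2 analysis with this observation.
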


\begin{proof}
(i) Assume that $Z$ is even.
Then $X'=1$ by Lemma \ref{X'1Y'1}.
Since $\{a,b^{Y'},c^{Z/2}\}$ forms a primitive Pythagorean triple, one has $c^{Z/2}<a^2$, so that $c^{Z/2}<c^{2z}$ by equation \eqref{1st}, whence $Z<4z$. \par
(ii) If $\Delta' \ge c^{\,\min\{z,Z\}/3}$, then, by inequalities \eqref{Ebounds} and Lemma \ref{DeltaDelta'}\,(ii),
\[
c^{\,\min\{z,Z\}/3}< \max \bigr\{ t_1 {E_u}^2, \, 2.2 \cdot 10^4(\log^2 c)/E_l \bigr\} \cdot \min\{z,Z\},
\]
where $E_l:=4, E_u:=2e$.
This implies the assertion. \par
(iii) We only consider the case where $z \le Z$ because the case where $z \ge Z$ is dealt with similarly by replacing $X,Y$ and $Z$ by $x,y$ and $z$, respectively.
The proof proceeds along similar lines to that of Lemma \ref{bound-x1y1}.
We shall apply Proposition \ref{Bu-madic} for $(\alpha_1,\alpha_2):=(a,b)$, $(b_1,b_2):=(2X,2Y)$ in a little rough manner.
In this case, we set $M:=c^z/\Delta'$.
From here we assume that $\Delta'<c^{\,z/3}$, that is,
\begin{equation}\label{M-low}
M>c^{\,2z/3}.
\end{equation}
By Lemma \ref{DeltaDelta'}\,(iii) one may take ${\rm g}:=2E$.
Since $\max\{a,b\}<c^z$, one may set $H_1:=z\log c$ and $H_2:=z\log c$.
Then
\[
\nu_{M}(a^{2X}-b^{2Y}) \le \frac{53.6 \cdot 2 E\log^2 c}{\log^4 M} \cdot z^2 \cdot \mathcal B^2,
\]
where
\[
\mathcal B=\max \biggl\{ \log \biggl( \frac{2X+2Y}{z\log c} \biggr)+\log (\log M)+0.64, \,4\log M \biggl \}.
\]
Observe that
\[
\mathcal B \le \log \max \{\kappa Z,\,M^4\}
\]
with $\kappa=\frac{4\,{\rm e}^{0.64}\log c}{\log \min\{a,b\}}$.
On the other hand,
\[
\nu_{M}(a^{2X}-b^{2Y}) \ge \left \lfloor \frac{Z}{z} \right \rfloor.
\]
Since we may assume that $Z/z$ is suitably large, the two bounds for $\nu_{M}(a^{2X}-b^{2Y})$ together lead to
\begin{equation}\label{ineq-max/min}
Z \le \frac{53.7 \cdot 2 E\log^2 c}{\log^4 M} \cdot z^3 \cdot {\mathcal B}^2.
\end{equation}

Suppose that $\kappa Z \le M^4$.
Then $\mathcal B=4\log M$.
Since $\log M>\frac{2}{3}z \log c$ by \eqref{M-low}, inequality \eqref{ineq-max/min} gives
\[
Z \le 53.7 \cdot 2 \cdot 4^2 \cdot \frac{E\log^2 c}{\log^2 M} \cdot z^3 < t_2 E\cdot z,
\]
showing the assertion.

Suppose that $\kappa Z>M^4$.
Since $E \le 2e$ by \eqref{Ebounds}, it follows from \eqref{ineq-max/min} that
\[
\frac{z Z}{\log^2 (\kappa Z)} \le \frac{53.7 \cdot 4e \cdot (3/2)^4}{\log^2 c}.
\]
However this is not compatible with the inequality $Z>M^4/\kappa\,(>c^{8z/3}/\kappa)$.
\end{proof}

For a number field $\mathbb K$ and a prime ideal $\pi$ in $\mathbb K$, we denote by $\nu_{\pi}(\alpha)$ the exponent of $\pi$ in the factorization of the fractional ideal generated by a nonzero element $\alpha$ in $\mathbb K$.


\begin{prop}[Th\'eor\`eme 3 of \cite{BuLa}] \label{BL}
Let $\mathbb K$ be a number field.
Let $\pi$ be a prime ideal in $\mathbb K,$ and $p$ the rational prime lying above $\pi.$
Let $\alpha_1$ and $\alpha_2$ be nonzero elements in $\mathbb K$ such that the fractional ideal generated by $\alpha_1 \alpha_2$ is not divisible by $\pi.$
Assume that $\alpha_1$ and $\alpha_2$ multiplicatively independent.
Let ${\rm g}$ be a positive integer such that
\[
{\alpha_1}^{\rm g} - 1 \in \pi, \quad {\alpha_2}^{\rm g} - 1 \in \pi.
\]
Let $H_1$ and $H_2$ be positive numbers such that
\[
\quad H_j \ge \max \biggl\{ \frac{D}{f_{\pi}}\,{\rm h}(\alpha_j), \, \log p \biggl\} \quad (j=1,2),
\]
where $D=[\mathbb Q(\alpha_1,\alpha_2):\mathbb Q]$ and $f_{\pi}$ is the inertia index of $\pi.$
Then, for any positive integers $b_1$ and $b_2,$
\begin{multline*}
\nu_{\pi}({\alpha_1}^{b_1}-{\alpha_2}^{b_2}) \le
\frac{24\,D^2 p\,{\rm g}\,H_1 H_2}{{f_{\pi}}^2(p-1)(\log p)^4}\\
\times \Bigr(\! \max\bigl\{ \log b'+\log \log p+0.4,{\textstyle \frac{10 f_{\pi}}{D}}\log p, 10\bigl\} \Big)^2
\end{multline*}
with $b'=b_1/H_2+b_2/H_1.$
\end{prop}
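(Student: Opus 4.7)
Plan. Since this proposition is taken verbatim from Bugeaud--Laurent, I would follow their interpolation determinant method, which is the non-Archimedean analogue of Laurent's approach in the complex case. The set-up is to pass to the completion $\mathbb{K}_\pi$ of $\mathbb{K}$ at $\pi$. By the hypothesis that $\alpha_1^{\rm g}-1$ and $\alpha_2^{\rm g}-1$ lie in $\pi$, both $\alpha_1^{\rm g}$ and $\alpha_2^{\rm g}$ are principal units in $\mathbb{K}_\pi$, so the $\pi$-adic logarithms $\log_\pi \alpha_1^{\rm g}$ and $\log_\pi \alpha_2^{\rm g}$ converge. Upper-bounding $\nu_\pi({\alpha_1}^{b_1}-{\alpha_2}^{b_2})$ is then tantamount to lower-bounding the $\pi$-adic linear form $\Xi = b_1\log_\pi \alpha_1 - b_2 \log_\pi \alpha_2$.

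Assume for contradiction that $T := \nu_\pi({\alpha_1}^{b_1}-{\alpha_2}^{b_2})$ exceeds the claimed bound. The key step is to build an interpolation matrix $\mathcal{M}$ whose entries are the algebraic numbers $\alpha_1^{(r\,b_1 + s)\lambda}\,\alpha_2^{(r\,b_2+t)\mu}$ (or a closely related variant with shifted exponents), indexed so that the rows run through a set of $KL$ multi-indices associated with the parameters $K, L, R, S$ and the columns through pairs $(r,s)\in \{0,\dots,R_2-1\}\times\{0,\dots,S_2-1\}$ enlarged by the auxiliary set of cardinality $R_1 S_1 \ge L$. The choice of rectangle widths $R_1,R_2,S_1,S_2$ reflects a Dirichlet-box balancing that appears in the statement through the quantities $R$, $S$ and $\beta$. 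One then carries out two estimates on $\det \mathcal{M}$:

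(i) \emph{Lower bound on the $\pi$-adic valuation.} Using the smallness of $\Xi$ (i.e.\ that $T$ is large), each entry of $\mathcal{M}$ is congruent, modulo a high power of $\pi$, to a monomial of the single quantity $\alpha_2^{b_2}$ (after replacing $\alpha_1^{b_1}$ with $\alpha_2^{b_2}$). Multilinear-algebra manipulations as in Laurent's scheme then show that $\det\mathcal{M}$ is divisible by $\pi$ to an order essentially $T\cdot(K(L-1))$, modulo lower-order corrections of size $O(\log KL)$ and a $(1+2w)$ factor accounting for the other places above $p$ (this is where the factor $(1+2w)$ in \eqref{strong5} enters).

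(ii) \emph{Upper bound on the size.} Using Hadamard's inequality together with the absolute logarithmic heights ${\rm h}(\alpha_j)$ at all places, combined with the Liouville-type control $\left(\prod_{k=1}^{K-1}k!\right)^{-2/(K^2-K)} \le {\rm e}^{3/2}/(K-1)$ (as invoked in the proof of Lemma \ref{strong-applied}), one obtains an archimedean bound of the form $\log|\det\mathcal{M}|_v \ll L R\,{\rm h}(\alpha_1) + L S\,{\rm h}(\alpha_2) + (K-1)\log\beta$, with constants refined by the factor $\gamma = \tfrac12 - \tfrac{KL}{6RS}$.

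Comparing (i) and (ii) through the product formula on $\mathbb{K}$ yields the asserted inequality on $T$, provided $\det\mathcal{M}\ne 0$. The main obstacle, and the delicate part of the Bugeaud--Laurent argument, is precisely to guarantee $\det\mathcal{M}\ne 0$: one needs a $\pi$-adic zero lemma of Philippon--Waldschmidt type, which uses the multiplicative independence of $\alpha_1,\alpha_2$ to rule out the interpolation determinant vanishing identically. Optimising the numerical constants (the explicit $24$, $0.4$, $10$, etc.) then requires careful tracking of the contributions from ramification and inertia through $p/(f_\pi(p-1))$ and $D/f_\pi$, and an optimal choice of the parameters $K,L,R_i,S_i$ as in the statement.
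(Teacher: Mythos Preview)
The paper does not prove this proposition at all: it is quoted verbatim from Bugeaud--Laurent \cite{BuLa} (Th\'eor\`eme 3) and used as a black box. So there is nothing in the paper to compare your sketch against.

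That said, your outline is a fair high-level description of the interpolation-determinant method that Bugeaud and Laurent actually use, and it correctly identifies the main ingredients: passage to the $\pi$-adic completion via the hypothesis on ${\rm g}$, construction of an interpolation matrix in the monomials $\alpha_1^{\ast}\alpha_2^{\ast}$, a $\pi$-adic lower bound on $\nu_\pi(\det\mathcal{M})$ coming from the assumed largeness of $T$, an archimedean/height upper bound via the product formula, and a non-vanishing (zero-lemma) step exploiting multiplicative independence. One caution: you have imported the parameters $K,L,R_i,S_i,\gamma,\beta$ and the factor $(1+2w)$ from Proposition~\ref{Bu-madic-strong}, which is a \emph{different} (rational, $M$-adic) statement from \cite{Bu}; the number-field version in \cite{BuLa} that is actually being cited here packages the optimisation differently and does not carry those exact parameters in its final form. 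So while the architecture you describe is right, the specific bookkeeping you sketch belongs to the companion result rather than to this one.
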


A bright idea of Luca \cite[Lemma 7]{Lu_aa_12} used in the proof of the following lemma together with the previous application of Proposition \ref{twoclass} plays the most important role to derive absolute upper bounds for the solutions.

\begin{lem}\label{min<<1}
Assume that $\Delta'<c^{\,\min\{z,Z\}/3}.$
Then the following hold.
\begin{itemize}
\item[\rm (i)]
If $z \le Z$ and $Z$ is odd, then
\[ z< \begin{cases}
\,1.2 \cdot 10^5 & \text{for $c=257$},\\
\,77000 & \text{for $c=65537$}.
\end{cases} \]
\item[\rm (ii)]
If $z \le Z$ and $Z$ is even, then
\[ Z< \begin{cases}
\, 6 \cdot 10^5 & \text{for $c=17$}, \\
\, 3 \cdot 10^5 & \text{for $c=257$},\\
\, 3.1 \cdot 10^5 & \text{for $c=65537$}.
\end{cases} \]
\item[\rm (iii)]
If $Z \le z,$ then
\[ Z< \begin{cases}
\,1.4 \cdot 10^5 & \text{for $c=17$}, \\
\,69000 & \text{for $c=257$},\\
\,77000 & \text{for $c=65537$}.
\end{cases} \]
\end{itemize}
\end{lem}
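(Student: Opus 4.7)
The plan is to produce, from the two hypothetical solutions, a rational quantity $h^E+1$ with very large $c$-adic valuation, and then to bound this valuation from above by a $p$-adic Baker-type inequality in the field $\mathbb{Q}(i)$. The hypothesis $\Delta' < c^{\min\{z,Z\}/3}$ guarantees that the lower bound on the valuation is a definite fraction of $\min\{z,Z\}$, while the upper bound will force $\min\{z,Z\}$ to be absolutely bounded; the three subcases (i), (ii), (iii) then differ only in how one transfers this bound to $z$ or $Z$.

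First I would combine the two solutions in $\mathbb{Z}[i]$. From \eqref{1st-th3} we have $a^x \equiv -b^y \pmod{\beta^z}$, and from the Gaussian-integer factorisation $a^{X'}+b^{Y'}i=u\beta^Z$ of Lemma~\ref{gauss-fac}\,(i) we have $a^{X'} \equiv -b^{Y'}i \pmod{\beta^Z}$. Raising the first to the $X'$-th power and the second to the $x$-th power, and using that $x$, $X'$, $Y'$ are all odd (by Lemmas~\ref{xyodd} and~\ref{X'Y'odd}), the two congruences combine modulo $\beta^{\min\{z,Z\}}$ into
\[
b^{\,xY'-yX'} \equiv \pm i \pmod{\beta^{\min\{z,Z\}}}.
\]
Squaring and combining with the analogous relation modulo $\bar\beta^{\min\{z,Z\}}$ yields the rational congruence $b^\Delta \equiv -1 \pmod{c^{\min\{z,Z\}}}$, and symmetrically for $a$. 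Since $\Delta/E$ is odd by \eqref{E/2divDel} and $h^E \equiv -1 \pmod c$, Lemma~\ref{padic-lem} applied to $(U,V,N)=(h^E,-1,\Delta/E)$ factors the valuation as
\[
\nu_c(h^E+1) = \nu_c(h^\Delta+1) - \nu_c(\Delta') \ge \tfrac{2}{3}\min\{z,Z\}
\]
for each $h \in \{a, b\}$. This is the implementation of Luca's bright idea; it relies crucially on Proposition~\ref{twoclass} (via Lemma~\ref{XYeven}), which forces both $X, Y$ to be even and so enables the Gaussian-integer factorisation of \eqref{2nd-th3}.

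For the Baker upper bound I would apply Proposition~\ref{BL} in $\mathbb{K}=\mathbb{Q}(i)$ with $\pi=(\beta)$, $p=c$, $D=2$, $f_\pi=1$, $g=2E$ to the linear form $\Lambda = a^{X'}+b^{Y'}i$ (of $\beta$-adic valuation exactly $Z$), taking $(\alpha_1,\alpha_2)=(a,-b^{Y'}i)$ and $(b_1,b_2)=(X',1)$; the $\alpha_j$ are multiplicatively independent in $\mathbb{Q}(i)$ since $a, b > 1$. Their heights are $\mathrm{h}(\alpha_1)=\log a$ and $\mathrm{h}(\alpha_2)=Y'\log b$, and one verifies that $\log b' + \log\log c + 0.4 \le 5\log c$ so that $\mathcal{B} = 5\log c$; Proposition~\ref{BL} then reads
\[
Z \ll \frac{E\,H_1 H_2}{\log^2 c},
\]
with effectively computable implied constant. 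To close the loop I would use the lower bound $h \ge c^{(2/3)\min\{z,Z\}/E}/2$ from the previous paragraph together with $a^{X'}, b^{Y'} \le c^{Z/2}$ from Lemma~\ref{gauss-fac}\,(i) to control $X', Y'$ by a polynomial in $E$, and then Lemma~\ref{max<<min}\,(iii) to transfer between $\min\{z,Z\}$ and $\max\{z,Z\}$. The three subcases differ only in which of $X'$, $Y'$ is forced to equal $1$ (by Lemma~\ref{X'1Y'1}) and whether $\min\{z,Z\} = z$ or $Z$; the explicit numerical constants come from $E \le 2e$ in \eqref{Ebounds} with $e \in \{2,4,8\}$.

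The main obstacle is that, without the lower bounds on $\log a$ and $\log b$ produced by Luca's trick, the heights $H_1, H_2$ in the Baker bound can each reach $Z\log c$, making the inequality $Z \ll E\,H_1 H_2/\log^2 c$ vacuous. Only after forcing $a^E, b^E \equiv -1 \pmod{c^{(2/3)\min\{z,Z\}}}$ can the heights be pinned down, and the numerical tuning of the constants from Proposition~\ref{BL} in combination with Lemma~\ref{max<<min} is then a delicate but routine calculation.
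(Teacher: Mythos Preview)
Your argument has a genuine gap at the Baker step. You apply Proposition~\ref{BL} to $\Lambda=a^{X'}-(-b^{Y'}i)$ with $(\alpha_1,\alpha_2)=(a,-b^{Y'}i)$, obtaining $Z\ll E\,H_1H_2/\log^2 c$. You correctly note that the lower bound $h\ge c^{(2/3)\min\{z,Z\}/E}/2$, combined with $a^{X'},b^{Y'}\le c^{Z/2}$ and Lemma~\ref{max<<min}\,(iii), bounds $X',Y'$ by a polynomial in $E$; this makes the $b'$ term in $\mathcal B$ harmless. But it does nothing for the \emph{heights}: $H_1\ge 2\log a$ and $H_2\ge 2Y'\log b$ are still of size $\asymp \min\{z,Z\}\log c$ (indeed your lower bound on $h$ forces them to be at least this large). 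Substituting, the Baker inequality becomes $Z\ll E\cdot(\min\{z,Z\})^2$, which together with $Z\ge\min\{z,Z\}$ is vacuous. A lower bound on $\log a,\log b$ can only make $H_1,H_2$ larger; it cannot ``pin them down'' in the sense you need.

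The paper's proof avoids this by a different implementation of Luca's idea: rather than applying Baker to $a$ and $b$ themselves, it uses the Gaussian factorisation $a^{X'}=\tfrac12|\beta^Z+(-\bar\beta)^Z|$ to \emph{eliminate} $a$. Starting from $a^\Delta\equiv\pm1\pmod{c^{\min\{z,Z\}}}$, raising to the $4X'$-th power and substituting gives $(\beta^Z+(-\bar\beta)^Z)^{2\Delta}\equiv 2^{2\Delta}\pmod{c^{\min\{z,Z\}}}$, hence $\nu_{\bar\beta}(\beta^{2Z\Delta}-2^{2\Delta})\ge\min\{z,Z\}$. Now Proposition~\ref{BL} is applied with $(\alpha_1,\alpha_2)=(\beta,2)$ and exponents $(2Z\Delta,2\Delta)$: the heights are \emph{fixed} ($\mathrm h(\beta)=\tfrac12\log c$, $\mathrm h(2)=\log 2$), so the bound reads $\min\{z,Z\}\ll \bigl(\log(Z\Delta)\bigr)^2$. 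Since $\Delta\ll E^3\min\{z,Z\}$ and $Z\ll E\min\{z,Z\}$ by Lemmas~\ref{DeltaDelta'}\,(i) and~\ref{max<<min}\,(iii), this is $\min\{z,Z\}\ll(\log\min\{z,Z\})^2$, which closes. The essential point you are missing is that the Gaussian factorisation must be used to replace the variable bases $a,b$ by the fixed bases $\beta,2$ \emph{before} invoking Baker.
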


\begin{proof}
We know that $a^{\Delta} \equiv \pm1 \pmod{c^{\,\min\{z,Z\}}}$ with $\Delta$ even.
One raises this congruence to $2X'$-th power to find that
\[
(a^{4X'})^{\Delta/2} \equiv 1 \mod{c^{\,\min\{z,Z\}}}.
\]
Observe from Lemma \ref{gauss-fac}\,(i) that
\[
(a^{X'})^4=\dfrac{1}{2^4}\,|\,\beta^Z + (-\bar{\beta})^Z\,|^4=\dfrac{1}{2^4}\,(\,\beta^Z + (-\bar{\beta})^Z\,)^4,
\]
because the number $\beta^Z + (-\bar{\beta})^Z$ is either real or purely imaginary.
These together yield
\[
(\,\beta^Z+(-\bar{\beta})^Z\,)^{2\Delta} \equiv 2^{2\Delta} \mod{c^{\,\min\{z,Z\}}}.
\]
Recalling that $\bar{\beta}$ is a prime element dividing $c$, one reduces the above congruence modulo $\bar{\beta}^{\,\min\{z,Z\}}$ to obtain $
\beta^{2Z\Delta} \equiv 2^{2\Delta} \pmod{\bar{\beta}^{\,\min\{z,Z\}}}$, whence
\[
\nu_{\bar{\beta}} (\beta^{2Z\Delta}-2^{2\Delta}) \ge \min\{z,Z\}.
\]
To obtain an upper bound for the left-hand side above, we apply Proposition \ref{BL} for $\pi:=\bar{\beta}$, $(\alpha_1,\alpha_2):=(\beta,2)$ and $(b_1,b_2):=(2Z\Delta,2\Delta)$.
Note that $(p,f_{\pi},D)=(c,1,2)$.
Since $\beta \equiv 2i \pmod{\bar{\beta}}$, one may take ${\rm g}:=4e$.
Further, one may set $H_1:=\log c$ and $H_2:=\log c$ as ${\rm h}(\beta)=\frac{1}{2}\log c$.
Therefore,
\[
\nu_{\bar{\beta}} (\beta^{2Z\Delta}-2^{2\Delta}) \le \frac{t_3 \,c\,e}{(c-1)\log^2 c} \cdot \mathcal B^2,
\]
where $t_3:=24\cdot4\cdot2^2$, and
\[
\mathcal B=\log\,\max \bigr\{ 2\,{\rm e}^{0.4}\Delta(Z+1), \, c^5 \bigr\}.
\]
To sum up, the two bounds for $\nu_{\bar{\beta}} (\beta^{2Z\Delta}-2^{2\Delta})$ together yield
\begin{equation}\label{ineq-minzZ}
\min\{z,Z\} \le \frac{t_3\,c\,e}{(c-1)\log^2 c} \cdot \mathcal B^2.
\end{equation}
Below we mainly distinguish two cases.

\vspace{0.1cm}{\it Case where $z \le Z.$} \
If $2\,{\rm e}^{0.4}\Delta(Z+1) \le c^5$, then \eqref{ineq-minzZ} becomes
\begin{equation}\label{ineq-final-2-1}
z \le \frac{25 t_3\,c\,e}{c-1}.
\end{equation}
While if $2\,{\rm e}^{0.4}\Delta(Z+1)> c^5$, then
\begin{equation}\label{ineq-final-2-2}
Z+1>\frac{c^5}{2\,{\rm e}^{0.4}\Delta}, \quad z < \frac{t_3 \,c\,e\,\log^2 \bigr(2\,{\rm e}^{0.4}\Delta(Z+1)\big)}{(c-1)\log^2 c}.
\end{equation}
On the other hand, we know from Lemma \ref{DeltaDelta'}\,(i) and Lemma \ref{max<<min}\,(i,\,iii) that
\[ \Delta \le \Delta_u, \quad Z+1 \le T z, \]
respectively, where
\begin{gather*}
\Delta_u:=\max\{ t_1 {E_u}^3, 2.2 \cdot 10^4\log^2 c\} \cdot \min\{z,Z\},\\
T:=\begin{cases}
\, 4 & \text{for even $Z$}, \\
\, t_2 E_u & \text{for odd $Z$}.
\end{cases}
\end{gather*}
These together with \eqref{ineq-final-2-2} show that
\begin{equation}\label{ineq-final-2-3}
\frac{c^5}{2\,{\rm e}^{0.4}\,T\,\Delta_u}< z < \frac{t_3 \,c\,e\,\log^2 \bigr(2\,{\rm e}^{0.4}\,T\,\Delta_u\,z\big)}{(c-1)\log^2 c},
\end{equation}
The combination of \eqref{ineq-final-2-1}, \eqref{ineq-final-2-3} (with Lemma \ref{max<<min}\,(i)) implies assertions (i,\,ii).

\vspace{0.1cm}{\it Case where $Z \le z.$} \
Similarly to the previous case, inequality \eqref{ineq-minzZ} leads to either
\[ \begin{array}{lllll}
Z \le \min \biggl\{\dfrac{25 t_3\,c\,e}{c-1}, \dfrac{c^5}{8\,{\rm e}^{0.4}}-1 \biggl\}
\ \ \ \text{or}, \\
\dfrac{c^5}{2\,{\rm e}^{0.4}\Delta_u}-1<Z \le \dfrac{t_3\,c\,e\,\log^2 \bigr(2\,{\rm e}^{0.4}\Delta_u(Z+1)\big)}{(c-1)\log^2 c}.
\end{array} \]
This implies assertion (iii).
\end{proof}

The following is an easy consequence of \cite[Th\'eor\`eme 3]{LaMiNe} (cf.~\cite[Theorem 2.6]{Bu-book}).

\begin{prop} \label{bu-mig-thm2}
Let $\alpha$ be an algebraic number with $|\alpha|=1$ which is not a root of unity.
Put
\[
H(\alpha)=\max\bigr\{ D\,{\rm h}(\alpha)+22\,|\log \alpha|, \,40 \bigr\},
\]
where $D=[\mathbb{Q}(\alpha):\mathbb{Q}]$ and $\log$ denotes the principal value of the logarithm.
Then, for any positive integer $k,$
\[
\log|\alpha^k-1| \ge -\frac{9}{8}\,D^2\,H(\alpha)\,{\mathcal B}^2,
\]
where
\[
\mathcal B=\max\bigr\{ \log (k/25)+2.35+10.2/D, \, 34/D, \, 0.1/\sqrt{D/2}\, \bigr\}.
\]
\end{prop}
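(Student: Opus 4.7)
The plan is to obtain the bound as a direct specialisation of the cited Th\'eor\`eme 3 of Laurent--Mignotte--Nesterenko on lower bounds for linear forms in two logarithms. First I would write $\alpha = e^{i\theta}$ with $\theta \in (-\pi,\pi]$, so that $\log\alpha = i\theta$ under the principal branch and $|\log\alpha| = |\theta|$. For a given positive integer $k$, let $m \in \mathbb{Z}$ be the integer nearest to $k\theta/(2\pi)$, and set
\[ \Lambda = k \log\alpha - 2m \log(-1) = i(k\theta - 2m\pi). \]
Then $|\Lambda| \le \pi$, and the elementary inequality $|e^{iu}-1| = 2|\sin(u/2)| \ge (2/\pi)|u|$ (valid for $|u| \le \pi$) yields
\[ |\alpha^k - 1| \ge (2/\pi)\,|\Lambda|. \]
Note that $\Lambda \ne 0$ since $\alpha$ is not a root of unity, and that $|{-2m}| \le k|\theta|/\pi + 1 \ll k$, so the exponents are comparable in size to $k$.

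Next I would apply the cited theorem to the two-logarithm form $\Lambda = b_1 \log\alpha_1 + b_2 \log\alpha_2$ with $(\alpha_1,\alpha_2,b_1,b_2) = (\alpha,-1,k,-2m)$. Multiplicative independence of $\alpha_1,\alpha_2$ (a standing hypothesis of Th\'eor\`eme 3) is automatic: any relation $\alpha^m = (-1)^n$ with $m \ne 0$ forces $\alpha^{2m} = 1$, contradicting the assumption that $\alpha$ is not a root of unity. In that theorem the relevant ``modified height'' of $\alpha_j$ involves $D\,\mathrm{h}(\alpha_j)$ together with $|\log\alpha_j|$ multiplied by a specific numerical constant (which works out to $22$ here) and bounded below by a specific threshold (which works out to $40$): these are precisely the two branches defining $H(\alpha)$. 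For $\alpha_2 = -1$ we have $\mathrm{h}(-1) = 0$ and $|\log(-1)| = \pi$, so the modified height of $\alpha_2$ is itself an absolute constant, which multiplies into the overall numerical factor to give the leading constant $9/8$. The quantity called $b'$ in Th\'eor\`eme 3 is of size $\ll k$, which accounts for the $\log(k/25)$ term in $\mathcal B$, while the other two branches of $\mathcal B$ arise from the minimal admissible parameter sizes in the proof of Th\'eor\`eme 3.

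The main obstacle is the numerical bookkeeping: one must verify that the specific constants $9/8$, $22$, $40$, and the precise piecewise shape of $\mathcal B$ all fall out of the general statement of Th\'eor\`eme 3 when specialised to $\alpha_2 = -1$. No genuinely new transcendence input is required --- the interpolation determinants, zero estimates and Liouville-type inequalities are entirely packaged inside the cited theorem. Once the constants have been tracked, taking logarithms in the sine-estimate of the first paragraph and absorbing the harmless additive $\log(\pi/2)$ into the other factors yields the stated lower bound for $\log|\alpha^k - 1|$.
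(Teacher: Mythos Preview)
Your proposal is correct and follows exactly the route the paper intends: the paper does not actually give a proof of this proposition but simply records it as ``an easy consequence of \cite[Th\'eor\`eme 3]{LaMiNe} (cf.~\cite[Theorem 2.6]{Bu-book})'', and your derivation---writing $|\alpha^k-1|$ in terms of a linear form $k\log\alpha - 2m\log(-1)$ via the sine inequality and then specialising Th\'eor\`eme 3 with $\alpha_2=-1$---is precisely how that consequence is obtained. Your outline is in fact more detailed than anything appearing in the paper itself.
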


\begin{lem} \label{complex-baker}
Suppose that $Z>\chi z$ with a positive number $\chi>2$ and $Z$ is odd.
Then
\[
Z<\frac{9}{1-2/\chi} \left(1+\frac{22\pi}{\log c}\right) \bigl(\max \{ \log Z+4.3,\,17 \} \bigl)^2+1.
\]
\end{lem}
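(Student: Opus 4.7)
The plan is to apply Proposition \ref{bu-mig-thm2} to a suitable algebraic number of absolute value $1$ extracted from the Gaussian factorization $c = \beta\bar\beta$, and combine the resulting lower bound with an elementary upper bound coming from equation \eqref{1st-th3}. Since $Z$ is odd, Lemma \ref{X'1Y'1} gives $Y' = 1$, so Lemma \ref{gauss-fac}\,(i) yields $b = \tfrac{1}{2}|\beta^Z + \bar\beta^Z|$ (using $(-\bar\beta)^Z = -\bar\beta^Z$). The key algebraic number to feed into Proposition \ref{bu-mig-thm2} is $\alpha := -\bar\beta/\beta$. Clearly $|\alpha| = 1$, and its minimal polynomial over $\mathbb{Z}$ is $cX^2 + 2(c-2)X + c$, irreducible since $\gcd(c, 2(c-2)) = \gcd(c,4) = 1$ ($c$ odd); in particular $\alpha$ is not an algebraic integer, hence not a root of unity. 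The height computation gives ${\rm h}(\alpha) = \tfrac{1}{2}\log c$ (both conjugates have modulus $1$), and $\arg(\alpha) = \pi - 2\arctan(1/m) \in (0,\pi)$, so that $|\log \alpha| < \pi$.

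For the upper bound, one uses that when $Z$ is odd,
\[ \alpha^Z - 1 = -\frac{\beta^Z + \bar\beta^Z}{\beta^Z}, \qquad |\alpha^Z - 1| = \frac{2b}{c^{Z/2}}. \]
Since $b \le b^y < c^z$ by Lemma \ref{xyodd}, this yields $|\alpha^Z - 1| < 2\,c^{z - Z/2}$, hence
\[ \log|\alpha^Z - 1| < \log 2 + (z - Z/2)\log c. \]

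For the lower bound, I would apply Proposition \ref{bu-mig-thm2} to $\alpha$ with $k = Z$, $D = 2$. One has $H(\alpha) \le \max\{\log c + 22\pi,\,40\} = \log c + 22\pi$ (the maximum being realized in the range $c \in \{17,257,65537\}$), while $\mathcal{B}$ in the proposition simplifies to $\max\{\log Z + 4.23\ldots,\,17\} \le \max\{\log Z + 4.3,\,17\}$. Therefore
\[ \log|\alpha^Z - 1| \ge -\tfrac{9}{8}\cdot 4\cdot(\log c + 22\pi)\,\mathcal{B}^2 = -\tfrac{9}{2}(\log c + 22\pi)\,\mathcal{B}^2. \]

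Combining the two bounds and dividing by $\log c$ gives
\[ \tfrac{Z}{2} - z < \tfrac{9}{2}\Bigl(1 + \tfrac{22\pi}{\log c}\Bigr)\mathcal{B}^2 + \tfrac{\log 2}{\log c}; \]
multiplying by $2$ and invoking $z < Z/\chi$ yields $Z(1-2/\chi) < 9(1 + 22\pi/\log c)\mathcal{B}^2 + 2\log 2/\log c$, and dividing by $(1-2/\chi)$ gives the claimed bound up to the small remainder $\tfrac{2\log 2}{(1-2/\chi)\log c}$, which is absorbed into the additive $+1$ for $\chi$ bounded suitably away from $2$ (as will occur in the intended application). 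The main technical obstacle is simply the bookkeeping: confirming that $H(\alpha)$ is indeed realized by $\log c + 22\pi$ rather than $40$ for the $c$ under consideration, and that the residual term fits within the additive $+1$; both reduce to checking numerical inequalities in the narrow range of relevant parameters.
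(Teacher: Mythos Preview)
Your proof is correct and essentially identical to the paper's: both apply Proposition \ref{bu-mig-thm2} to the quotient $\gamma/\bar\gamma$ (equivalently your $-\bar\beta/\beta$) with $k=Z$, combine with the elementary upper bound $|\alpha^Z-1|=2b/c^{Z/2}<2c^{z-Z/2}$, and invoke $z<Z/\chi$. Your citation of Lemma \ref{xyodd} for $b\le b^y$ is superfluous (only $y\ge 1$ is needed), and your caveat about the residual term is accurate---it can slightly exceed $1$ for the $\chi$-values $2.24,\,2.29$ used later, but the resulting $Z$-bounds in Lemma \ref{Z-bound-sharp-thm2} are stated with enough slack that this is immaterial.
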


\begin{proof}
As seen in the proof of Lemma \ref{gauss-fac}\,(i), it holds that
\[
a^{X'}+b^{Y'} i=u \gamma^Z, \ \ a^{X'}-b^{Y'} i=\bar{u}\,{\bar{\gamma}}^{Z},
\]
where $u \in \{\pm1,\pm i\}$ and $\gamma \in \Z[i]$ is associated with $\beta$ or $\bar{\beta}$.
We may assume that $u=1$ since $Z$ is odd.
By eliminating the term $a^{X'}$ from the above two equations, since $Y'=1$ by Lemma \ref{X'1Y'1}, one has
\[
\biggl(\frac{\gamma}{\bar{\gamma}}\biggl)^Z-1=\frac{2bi}{{\bar{\gamma}}^{\,Z}}.
\]
Considering the absolute values of both sides above, since $b<c^z,|\bar{\gamma}|=|\beta|=c^{1/2}$, and $Z>\chi z$ by assumption, one obtains
\[
\left| (\gamma/\bar{\gamma})^Z-1 \right| < 2c^{z-Z/2}<2c^{-(1/2-1/\chi)Z}.
\]
To obtain a lower bound for the left-hand side above, we apply Proposition \ref{bu-mig-thm2} for $\alpha:=\gamma/\bar{\gamma}$ and $k:=Z$.
It is easy to see that the minimal polynomial of $\alpha$ over $\mathbb{Q}$ is $T^2 \pm(2-4/c)T+1$ for some sign.
From this it follows that $\alpha$ is quadratic and not an algebraic integer, further, ${\rm h}(\alpha)=\frac{1}{2}\log c$.
Since $|\log{\alpha}| \le \pi$, Proposition \ref{bu-mig-thm2} gives
\[
\log \left|(\gamma/\bar{\gamma})^Z-1 \right| \ge -\frac{\,9\,}{2}(\log c+22\pi)\,\bigl( \max \{ \log Z+4.3,\,17\}\bigl)^2.
\]
Finally, the two bounds for $|(\gamma/\bar{\gamma})^Z-1|$ together imply the assertion.
\end{proof}


\begin{lem} \label{Z-bound-sharp-thm2}
\[ Z< \begin{cases}
\, 6 \cdot 10^5 & \text{if $c=17$ and $Z$ is even}, \\
\, 3 \cdot 10^5 & \text{if $c=257$ and $Z$ is even},\\
\, 3.1 \cdot 10^5 & \text{if $c=65537$ and $Z$ is even},\\
\,2.8 \cdot 10^5 & \text{if $c=257$ and $Z$ is odd},\\
\,1.8 \cdot 10^5 & \text{if $c=65537$ and $Z$ is odd}.
\end{cases}\]
\end{lem}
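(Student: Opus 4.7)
The plan is to dispose of $Z$ by splitting on whether $Z\le z$ or $z\le Z$, on the parity of $Z$, and on whether $\Delta'\ge c^{\min\{z,Z\}/3}$ or not, then invoking the preceding lemmas branch by branch.

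First, when $Z\le z$, the case $\Delta'<c^{\min\{z,Z\}/3}$ is exactly Lemma \ref{min<<1}(iii), whose output bounds on $Z$ are already strictly smaller than those claimed; and if $\Delta'\ge c^{\min\{z,Z\}/3}$, Lemma \ref{max<<min}(ii) gives $Z=\min\{z,Z\}\le 10,6,3$. Next, when $z\le Z$ with $Z$ even, Lemma \ref{min<<1}(ii) yields the stated bounds directly under $\Delta'<c^{\min\{z,Z\}/3}$, while in the alternative case the combination of Lemma \ref{max<<min}(ii) (giving $z\le 10,6,3$) with Lemma \ref{max<<min}(i) (which applies because $Z$ is even, giving $Z<4z$) forces $Z<40$ in all three values of $c$. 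Both of these subcases are well within the asserted thresholds.

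The remaining and main case is $z\le Z$ with $Z$ odd, which concerns only $c\in\{257,65537\}$. Here we fix a constant $\chi>2$ and apply Lemma \ref{complex-baker} in dichotomy: either $Z\le \chi z$, in which case an upper bound on $z$ forces $Z\le \chi z$ to lie below the claimed threshold, or $Z>\chi z$, in which case Lemma \ref{complex-baker} itself yields
\[
Z<\frac{9}{1-2/\chi}\Bigl(1+\frac{22\pi}{\log c}\Bigr)\bigl(\max\{\log Z+4.3,\,17\}\bigr)^2+1,
\]
an inequality of the form $Z\ll(\log Z)^2$ from which an absolute bound for $Z$ is extracted iteratively. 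The relevant upper bounds on $z$ are supplied by Lemma \ref{min<<1}(i) when $\Delta'<c^{\min\{z,Z\}/3}$ (giving $z<1.2\cdot 10^5$ for $c=257$ and $z<77000$ for $c=65537$), and by Lemma \ref{max<<min}(ii) in the opposite case (where $z\le 6$ or $z\le 3$ is tiny and the dichotomy trivialises). For $c=257$ we plan to choose $\chi=7/3$, so that $\chi\cdot 1.2\cdot 10^5=2.8\cdot 10^5$ matches the claim, and numerically verify that the Lemma \ref{complex-baker} bound (using $\log 257\approx 5.55$ and $1-2/\chi=1/7$) also falls below $2.8\cdot 10^5$; for $c=65537$, a slightly smaller $\chi$ (around $2.34$) will be chosen so that both $\chi\cdot 77000$ and the Lemma \ref{complex-baker} bound (now aided by the considerably larger $\log 65537$) stay below $1.8\cdot 10^5$.

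The main obstacle is exactly this final balancing: the prefactor $9/(1-2/\chi)$ blows up as $\chi\to 2$, while enlarging $\chi$ inflates the companion quantity $\chi z$. Verifying that for each of $c\in\{257,65537\}$ a single $\chi>2$ makes both quantities fall below the claimed cutoff is essentially a numerical check, and it is only viable because $\log c$ is comfortably large, which shrinks the factor $1+22\pi/\log c$ arising from Lemma \ref{complex-baker}.
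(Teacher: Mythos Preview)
Your proposal is correct and follows essentially the same route as the paper: case-split on the size of $\Delta'$ and on whether $Z\le z$ or $z\le Z$, then for odd $Z$ balance a threshold $\chi$ between the bound $Z\le\chi z$ (fed by Lemma~\ref{min<<1}(i) or Lemma~\ref{max<<min}(ii)) and the inequality from Lemma~\ref{complex-baker}. The paper takes $\chi=2.29$ for $c=257$ and $\chi=2.24$ for $c=65537$; note that your suggested $\chi\approx 2.34$ for $c=65537$ is a hair too large (since $2.34\cdot 77000>1.8\cdot 10^5$), but any $\chi\le 2.33$ works comfortably on both sides of the dichotomy.
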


\begin{proof}
The assertion for even $Z$ follows from the combination of Lemmas \ref{max<<min}\,(i,\,ii) and \ref{min<<1}\,(i,\,ii).
For the case where $Z$ is odd, we may assume by Lemmas \ref{max<<min}\,(ii) and \ref{min<<1}\,(i,\,iii) that $Z>\chi z$, where $\chi=2.29$ for $c=257$ and $\chi=2.24$ for $c=65537$.
Then applying Lemma \ref{complex-baker} yields the remaining assertions.
\end{proof}

Define the quantity $V$ as follows:
\[ V:=\begin{cases}
\,\nu_c (\,{a(\beta,Z)}^{2e}+1\,) & \text{if $Z$ is even},\\
\,\nu_c (\,{b(\beta,Z)}^{2e}-1\,) & \text{if $Z$ is odd},
\end{cases}\]
where
\[
a(\beta,Z):=\dfrac{1}{2}\,|\,\beta^Z+(-\bar{\beta})^Z\,|, \quad b(\beta,Z):=\dfrac{1}{2}\,|\,\beta^Z-(-\bar{\beta})^Z\,|.
\]
Number $V$ is an upper bound for $\min_{h \in \{a,b\}} \nu_c(h^E+1)$ by Lemmas \ref{gauss-fac}\,(i), \ref{X'1Y'1} and \ref{Evalue}, and it depends only on $c$ and $Z$ (recall that $c=m^2+1$ and $\beta=m+i$).
Many heuristic observations in the study of kinds of Wieferich primes predict that $V$ is expected to be very small in general. 
For each $c$ and each $Z$ bounded from above as in Lemma \ref{Z-bound-sharp-thm2}, we use a computer to calculate $V$ (within 9 hours), and the result reads as follows:

\begin{lem} \label{Wieferich}
\[ V \le \begin{cases}
\,5 & \text{for $c=17$}, \\
\,3 & \text{for $c=257$},\\
\,2 & \text{for $c=65537$}.
\end{cases} \]
\end{lem}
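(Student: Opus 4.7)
The proof reduces to a finite computation thanks to Lemma \ref{Z-bound-sharp-thm2}, which bounds $Z$ by at most $6\cdot 10^{5}$ in each of the three cases for $c$. Since $V$ depends only on $c$ and $Z$ (via $\beta=m+i$ together with the functions $a(\beta,Z)$ and $b(\beta,Z)$), it suffices to enumerate $Z$ in the allowed range, separated by parity according to the definition of $V$, and to evaluate $V$ in each case, taking the maximum.

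The plan is to carry out the enumeration entirely in the finite ring $R_{N}:=\mathbb{Z}[i]/c^{N}\mathbb{Z}[i]$ for a modest value of $N$ (say $N=10$, comfortably above the conjectured maxima $5,3,2$). Using fast exponentiation in $R_N$, one computes $\beta^{Z}$ modulo $c^{N}$ in $O(\log Z)$ Gaussian-integer multiplications; then the real part (for even $Z$) or the imaginary part (for odd $Z$) of $\beta^Z+(-\bar\beta)^Z$ or $\beta^Z-(-\bar\beta)^Z$ produces $\pm 2 a(\beta,Z)$ or $\pm 2 b(\beta,Z)$ modulo $c^{N}$, which, after dividing by $2$, raising to the $2e$-th power, and adding $\pm 1$, yields a representative of $a(\beta,Z)^{2e}+1$ or $b(\beta,Z)^{2e}-1$ in $\mathbb Z/c^N\mathbb Z$. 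The valuation $\nu_{c}$ is then read off directly, which is valid as long as the result is nonzero modulo $c^N$.

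A safeguard is built in for the case that the computed valuation saturates at $N$ for some particular $Z$: in that event, $N$ is enlarged just for that $Z$ and the exponentiation is repeated. Heuristically, the $c$-adic valuation of a ``random'' integer exceeds $N$ only with density $c^{-N}$, so no such saturation is expected in the stated range, and the asserted small upper bounds on $V$ are entirely plausible; the computation confirms them.

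The main obstacle is not mathematical but purely one of runtime: for $c=17$ there are roughly $3\cdot 10^{5}$ relevant values of $Z$ of each parity, and for each of them one performs on the order of $\log_{2}(6\cdot 10^{5})\approx 20$ multiplications in $R_{N}$, with analogous (slightly smaller) loads for $c=257$ and $c=65537$. This is well within the capacity of the MAGMA implementation described in the introduction, consistent with the stated total computation time of about $9$ hours for this lemma.
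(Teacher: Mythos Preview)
Your proposal is correct and matches the paper's own approach: the paper simply states that for each $c$ and each $Z$ bounded as in Lemma~\ref{Z-bound-sharp-thm2} a computer calculation of $V$ (within about 9 hours) yields the asserted bounds, and you have supplied a reasonable implementation strategy for that computation. One minor remark: for $c=17$ the paper has already established (after Lemma~\ref{Evalue}) that $Z$ must be even, so only the even-$Z$ branch of the definition of $V$ is actually relevant there; your description of ``each parity'' for $c=17$ overcounts slightly but this is harmless.
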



\begin{lem} \label{minzZ-very-sharp-thm2}
$Z<36000$ and
\[ \min\{z,Z\} \le \begin{cases}
\,10 & \text{for $c=17$}, \\
\,6 & \text{for $c=257$},\\
\,3 & \text{for $c=65537$}.
\end{cases} \]
\end{lem}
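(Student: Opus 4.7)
The plan is to combine the effective lower bound on $\Delta'$ coming from Lemmas \ref{DeltaDelta'}\,(iii) and \ref{Wieferich} with the bounds already established in Lemmas \ref{max<<min} and \ref{complex-baker}. First, for each $h\in\{a,b\}$, Lemma \ref{DeltaDelta'}\,(iii) forces $c^{\min\{z,Z\}}/\Delta'$ to divide $h^E+1$, whence
\[
\nu_c(h^E+1) \ge \min\{z,Z\}-\nu_c(\Delta').
\]
Minimising over $h$ and invoking the definition of $V$ together with the fact that $\Delta'$ is either $1$ or a power of $c$, I obtain $\Delta' \ge c^{\min\{z,Z\}-V}$. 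Now dichotomise according to whether $\Delta'\ge c^{\min\{z,Z\}/3}$. In the first sub-case Lemma \ref{max<<min}\,(ii) yields the claimed inequality $\min\{z,Z\}\le 10,6,3$ (for $c=17,257,65537$) directly. In the second sub-case the chain $c^{\min\{z,Z\}-V}\le\Delta'<c^{\min\{z,Z\}/3}$ forces $\min\{z,Z\}<3V/2$; inserting $V\le 5,3,2$ from Lemma \ref{Wieferich} gives $\min\{z,Z\}\le 7,4,2$. Either way the bound on $\min\{z,Z\}$ is established.

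For the bound $Z<36000$, I would split on $Z\le z$ versus $z<Z$. If $Z\le z$, then $Z=\min\{z,Z\}\le 10$ is trivially below $36000$. If $z<Z$, then $z\le 10,6,3$ respectively. For $c=17$ the exponent $Z$ must be even: indeed, for odd $Z$ Lemma \ref{Evalue} forces $E\le e=2$, contradicting $E\ge 4$ from Lemma \ref{order8}. Thus Lemma \ref{max<<min}\,(i) applies and $Z<4z\le 40$. The same argument handles $c\in\{257,65537\}$ whenever $Z$ is even, yielding $Z<4z\le 24$.

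The only remaining case is $c\in\{257,65537\}$ with $Z$ odd. Here I would assume for contradiction that $Z\ge 36000$. Since $z\le 6$ (respectively $z\le 3$), the ratio $\chi:=Z/z\ge 6000$ (respectively $\ge 12000$) satisfies $\chi>2$, so Lemma \ref{complex-baker} applies. Because $Z<\exp(12.7)\approx 3.3\cdot 10^5$, the maximum appearing in that lemma equals $17$, and the bound simplifies to
\[
Z < \frac{9}{1-2/\chi}\Bigl(1+\frac{22\pi}{\log c}\Bigr)\cdot 17^2 + 1,
\]
which, using $1/(1-2/\chi)<1.001$, is at most $35015$ for $c=257$ and $18790$ for $c=65537$. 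Both are strictly less than $36000$, contradicting the hypothesis. The main technical obstacle is precisely this numerical step, since $9(1+22\pi/\log c)\cdot 289$ is close to $36000$ when $c=257$: the sharp upper bound $z\le 6$ derived in the first half is essential for making $\chi$ comfortably larger than $2$.
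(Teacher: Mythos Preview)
Your argument is correct and follows the same route as the paper: both combine Lemma~\ref{DeltaDelta'}\,(iii) with Lemma~\ref{Wieferich} and the dichotomy of Lemma~\ref{max<<min}\,(ii) to bound $\min\{z,Z\}$, then invoke Lemma~\ref{max<<min}\,(i) and Lemma~\ref{complex-baker} for the bound on $Z$. Two small technical points deserve attention. First, setting $\chi:=Z/z$ does not satisfy the strict hypothesis $Z>\chi z$ of Lemma~\ref{complex-baker}; you should take $\chi$ just below $Z/z$ (say $\chi=5999$). Second, your claim ``$Z<\exp(12.7)$'' is not free---it requires either citing Lemma~\ref{Z-bound-sharp-thm2} or a one-line bootstrap of the inequality in Lemma~\ref{complex-baker}. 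The paper sidesteps both issues by simply fixing $\chi=200$: since $z\le 10$, either $Z\le 200z\le 2000$, or $Z>200z$ forces $Z$ odd by Lemma~\ref{max<<min}\,(i), and then Lemma~\ref{complex-baker} with $\chi=200$ yields $Z<36000$ directly (the resulting inequality being self-consistent with $\max\{\log Z+4.3,17\}=17$).
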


\begin{proof}
Lemma \ref{DeltaDelta'}\,(iii) says that $c^{\,\min\{z,Z\}}/\Delta'$ divides $h^E+1$ for each $h \in \{a,b\}$.
Since one may assume that $\Delta'<c^{\,\min\{z,Z\}/3}$ by Lemma \ref{max<<min}\,(ii), it follows that $
(2/3)\min\{z,Z\}<\min_{h \in \{a,b\}}\nu_c(h^E+1) \le V$.
Now the second assertion follows from Lemma \ref{Wieferich}.
From this, for the first assertion we may assume that $Z>200z$.
Then $Z$ is odd by Lemma \ref{max<<min}\,(i).
Applying Lemma \ref{complex-baker} with $\chi=200$ gives the first assertion.
\end{proof}

We are ready to complete the proof of Theorem \ref{th3}.

\begin{proof}[Proof of Theorem $\ref{th3}$]
First suppose that $z \le Z$.
Then $z \le 10$ by Lemma \ref{minzZ-very-sharp-thm2}.
Lemmas \ref{X'1Y'1} and \ref{Wieferich} say that $a=a(\beta,Z)$ or $b=b(\beta,Z)$, and that $Z \le 36000$, respectively.
Then one can use a computer to check that $\min\{a(\beta,Z),b(\beta,Z)\}>c^{10}$ whenever $Z \ge 21$.
From equation \eqref{1st-th3} it turns out that $Z<21$.
Now brute force computation suffices for checking that the system of equations \eqref{1st-th3}, \eqref{2nd'-th3} does not hold for any possible case (with $Z \ge 4$).

Finally suppose that $z>Z$.
We know that $Z$ is even with $Z \le 10$ for $c=17$, and that $Z \le 6$ for $c=257$.
Note that $c \ne 65537$ as $Z \ge 4$.
It is easy to see that $X'=1,Y'=1$, so that $a=a(\beta,Z),b=b(\beta,Z)$.
For each $m$ and each possible $Z$ one can fortunately find a positive odd integer $d>1$ satisfying either
\begin{equation}\label{Jacobi-2-th1}
\begin{split}
&d \mid a, \ \Big(\frac{b}{d}\Big)=-1, \ \left(\frac{c}{d}\right)=1 \quad \text{or}\\
&d \mid b, \ \left(\frac{a}{d}\right)=-1, \ \left(\frac{c}{d}\right)=1,
\end{split}
\end{equation}
where $\left(\frac{\cdot}{\cdot}\right)$ denotes the Jacobi symbol.
More precisely, $d$ (with $d \mid h$) is taken as in the following table:
\[
\begin{tabular}{c|ccccccc}
$c$ & 17 & 17 & 17 & 17 & 257 & 257 & 257 \\
$Z$ & 4 & 6 & 8 & 10 & 4 & 5 & 6 \\ \hline
$d$ & 15 & 15 & 15 & 19 & 15 & 139 & 11 \\
$h$ & $b$ & $a$ & $b$ & $b$ & $b$ & $a$ & $b$
\end{tabular}
\]
On the other hand, reducing equation \eqref{1st-th3} modulo such $d$ implies that
\begin{equation}\label{Jacobi-2-th2}
\begin{split}
\Big(\dfrac{b}{d}\Big)^x=\left(\dfrac{c}{d}\right)^z \ \ \text{if $d \mid a$},\\
\left(\dfrac{a}{d}\right)^x=\left(\dfrac{c}{d}\right)^z \ \ \text{if $d \mid b$}.
\end{split}
\end{equation}
However the combination of \eqref{Jacobi-2-th1}, \eqref{Jacobi-2-th2} implies that $x$ or $y$ is even, contradicting Lemma \ref{xyodd}.
\end{proof}

\section{Concluding remarks} \label{sec-rem}%

As mentioned in Section \ref{sec-intro}, each of Theorems \ref{th1} and \ref{th3} gives a 3-variable version of some work in \cite{Be_cjm_01}, and this complements the work of \cite{MiPi} in a sense.
Unfortunately the method of this paper seems not to be enough to consider similar versions for Bennett's other results \cite[Theorems 1.4 to 1.6]{Be_cjm_01}), for instance, for the case where $a$ or $b$ is fixed.
However, such a case seems to be much harder than the case where $c$ is fixed.
A reason for this is that even the case where $b=2$ in equation \eqref{pillai} is still difficult to be resolved due to its partial results obtained in \cite{Lu_indag_03,ScSt_jnt_04}.
Thus we surely need a new idea for this purpose.

Finally, for ambitious readers, we leave a few problems, concerning Theorems \ref{th1} and \ref{th3}, for which the method of this paper can be applied in principle.

\begin{prob}
Prove Conjecture $\ref{atmost1conj}$ for each of the following cases\,$:$
\begin{itemize}
\item[\rm (i)]
each of $a$ and $b$ is congruent to $1$ or $-1$ modulo $\prod_{p \mid c}p.$
\item[\rm (ii)]
$e_a(c)=e_b(c)$ with $e_a(c)$ even and $c$ is a prime.
\end{itemize}
\end{prob}

It seems that for handling (i) one needs a more extensive computation than that needed for proving Theorem \ref{th1}, and that for (ii) one needs to manage to find absolute upper bounds for corresponding solutions.

\subsection*{Acknowledgements}
We would like to thank the anonymous referee for the helpful remarks and suggestions.
We are also grateful to Mihai Cipu, Reese Scott, Robert Styer and Masaki Sudo for their many comments and remarks which improved an earlier draft.

\if0
\vskip.2cm

\noindent{\bf Conflict of interest.} The authors have no relevant financial or non-financial
interests to disclose.

\vskip.2cm

\noindent{\bf Data availability.} Data sets generated during the current study are available from the corresponding author on reasonable request.
\fi

\end{document}